\documentclass{article} 

\usepackage[english]{babel} 
\usepackage[utf8]{inputenc} 
\usepackage[T1]{fontenc}

\usepackage[a4paper,left=2.4cm,right=2.4cm,top=2.2cm,bottom=2.2cm]{geometry}  
\usepackage{tikz}
\usetikzlibrary[topaths] 

\usepackage{amsmath, mathrsfs, amssymb, amsthm, mathtools, bbm, bm} 
\usepackage{microtype}
\usepackage{booktabs,tabularx,array,longtable}
\usepackage{xcolor}
\usepackage[linesnumbered,ruled,vlined]{algorithm2e}

\usepackage{csquotes} 

\usepackage{caption,float} 
\usepackage{graphicx}
\usepackage{makecell} 
\usepackage{enumitem} 
\usepackage[numbers]{natbib}
\usepackage[hidelinks]{hyperref}
\usepackage{doi}
\usepackage[capitalise,noabbrev]{cleveref}

\newtheorem{theorem}{Theorem}[section]
 
\newtheorem{definition}[theorem]{Definition}
\newtheorem{lemma}[theorem]{Lemma}
\newtheorem{remark}[theorem]{Remark}
\newtheorem{corollary}[theorem]{Corollary}
\newtheorem{proposition}[theorem]{Proposition}

\newtheorem{conjecture}[theorem]{Conjecture} 

\newtheorem*{theorem*}{Theorem}
\newtheorem*{example*}{Example} 
\newtheorem*{definition*}{Definition}
\newtheorem*{lemma*}{Lemma}
\newtheorem*{remark*}{Remark}
\newtheorem*{corollary*}{Corollary}
\newtheorem*{proposition*}{Proposition}
\newtheorem*{assumption*}{Assumption} 
\newtheorem*{claim*}{Claim} 

\crefname{conjecture}{Conjecture}{Conjectures}

\newtheoremstyle{TheoremNum}
    {\topsep}{\topsep}              
    {\itshape}                      
    {}                              
    {\bfseries}                     
    {.}                             
    { }                             
    {\thmname{#1}\thmnote{ \bfseries #3}}
\theoremstyle{TheoremNum}

\newtheoremstyle{LemmaNum}
    {\topsep}{\topsep}              
    {\itshape}                      
    {}                              
    {\bfseries}                     
    {.}                             
    { }                             
    {\thmname{#1}\thmnote{ \bfseries #3}}
\theoremstyle{LemmaNum}

\usepackage{float}
\usepackage{cleveref}

\usepackage{amsmath, mathrsfs, amssymb, amsthm, mathtools, bbm, bm} 

\usepackage{makecell} 

\usepackage{subfig}
\usepackage{float}

\usepackage{enumitem} 

\usepackage{algorithm} 
\usepackage{algcompatible} 

\newcommand{\F}{\mathcal{F}} 

\renewcommand{\d}{\mathrm{d}}

\newcommand{\e}{ \mathrm{ e } }

\newcommand{\R}{\mathbb{R}}
\newcommand{\Kop}{\mathcal{K}}
\newcommand{\Jop}{\mathcal{J}}

\newcommand{\Clust}{\operatorname{Clust}}

\newcommand{\argmax}{\operatorname*{arg\,max}}
\newcommand{\one}{\bm{1}}

\newcommand{\phimin}{\phi_{\!*}}
\newcommand{\floor}[1]{\left\lfloor #1\right\rfloor}
\newcommand{\ceil}[1]{\left\lceil #1\right\rceil}

\newcommand{\norm}[1]{\left\lVert#1\right\rVert}

\renewcommand{\]}{\right] }

\renewcommand{\(}{\left( }
\renewcommand{\)}{\right) }

\begin{document} 

\title{Optimal Recursive Composition and Dyadic Phase Laws for Gradient Descent with Predetermined Stepsizes}

\author{
Yu Liu\textsuperscript{1}
\quad
Kang Chen\textsuperscript{1}
\quad
Rujun Jiang\textsuperscript{2,*}
\quad
Tianyu Wang\textsuperscript{1,*}
}

\date{}

\maketitle

\footnotetext[1]{
Shanghai Center for Mathematical Sciences, Fudan University.
Emails:
\href{mailto:yuliu22@m.fudan.edu.cn}{\nolinkurl{yuliu22@m.fudan.edu.cn}},
\href{mailto:kchen26@m.fudan.edu.cn}{\nolinkurl{kchen26@m.fudan.edu.cn}},
\href{mailto:wangtianyu@fudan.edu.cn}{\nolinkurl{wangtianyu@fudan.edu.cn}}.
}

\footnotetext[2]{
School of Data Science, Shanghai Key Laboratory for Contemporary Applied Mathematics,
Fudan University.
Email:
\href{mailto:rjjiang@fudan.edu.cn}{\nolinkurl{rjjiang@fudan.edu.cn}}.
}

\begingroup
\renewcommand{\thefootnote}{*}
\footnotetext{Corresponding authors.}
\endgroup

\begin{abstract}

\textcolor{black}{
Predetermined stepsize schedules featuring carefully chosen long steps have recently been shown to accelerate gradient descent (GD) on smooth convex functions. A prominent class of such schedules is built through recursive composition. In this paper, we characterize the convergence of these optimized recursive schedules, revealing a non-constant log-periodic modulation across prescribed horizons. Specifically, for symmetric recursive frameworks (primitive and OBS-S constructions), we prove that for every $N \geq 1$, the corresponding optimized schedules satisfy
\begin{align*}
    f(x_{N-1})-f^\ast \le \frac{1}{2N^p\Phi(\log_2N)-1} \frac{L}{2}\|x_0-x^\ast\|^2, \qquad p=\log_2(1+\sqrt2), 
\end{align*}
where $\Phi$ is a positive, Lipschitz, nonconstant $1$-periodic function. We derive this by proving that balanced splitting is optimal at every horizon for these constructions, resolving a conjecture of Zhang and Jiang. Furthermore, for the asymmetric framework (the OBS-F construction), we show that although optimal splits are not necessarily balanced, the same Silver exponent asymptotically persists alongside a distinct log-periodic modulation.
}

\end{abstract}



\section{Introduction}

Let $f:\R^d\to\R$ be convex with $L$-Lipschitz gradient and a nonempty minimizer set. Given a normalized schedule $h=(h_0,\ldots,h_{n-1})^\top\in\R^n$, gradient descent takes the form
\begin{align}\label{eq:gd}
    x_{t+1}=x_t-\frac{h_t}{L}\nabla f(x_t),
    \quad t=0,\ldots,n-1.
\end{align}
The entire schedule is fixed before the run: the method uses no momentum, auxiliary state, averaging, or adaptation to observed iterates. This isolates a basic question: \emph{how much acceleration can classical gradient descent obtain by changing only its predetermined stepsizes?}

For short steps $0<h_t<2$, every iteration is descending and the sharp last-iterate order is $\Theta(n^{-1})$. Performance estimation gives tight bounds for fixed-step gradient methods \citep{DroriTeboulle}, while elementary analysis and dynamic short-step schedules with step sizes approaching $2$ from below improve the asymptotic constant without changing the order \citep{TeboulleVaisbourd}. More generally, one-dimensional Huber instances rule out a uniform $o(n^{-1})$ guarantee for any prescribed schedule consisting entirely of short steps. Accelerated first-order methods attain the optimal $O(n^{-2})$ order by adding momentum or extrapolation \citep{Nesterov1983,Nesterov2004}; this comparison long suggested that the one-state recursion~\eqref{eq:gd} could not be accelerated without changing its update rule.

Long steps reveal a different picture. A normalized step $h_t>2$ may increase the objective and invalidate the usual one-step Lyapunov argument, yet a carefully organized block of short and long steps can have a favorable net effect. The relevant certificate is therefore multistep and nonmonotone. For a fixed schedule, smooth convex interpolation reduces worst-case analysis to a finite-dimensional semidefinite program \citep{DroriTeboulle,TaylorHendrickxGlineur}; optimizing the schedule remains nonconvex. BnB-PEP computations certified globally optimal schedules through $n=25$ and produced near-optimal schedules through $n=50$, whose empirical decay suggested an exponent strictly larger than one \citep{DasGuptaVanParysRyu}.

The first rigorous results converted this evidence into provable acceleration. Periodically repeated long-step patterns give a strict constant-factor improvement over the tight standard bound \citep{Grimmer2024}. Two concurrent lines then obtained polynomial acceleration: \citet{AltschulerParriloJACM,AltschulerParriloMP} developed stepsize hedging and recursive gluing, leading to the Silver Stepsize Schedule, while \citet{GrimmerShuWangIJO} independently obtained the same exponent for objective gap and squared gradient norm. At the distinguished horizons $n=2^k-1$,
\begin{align*}
    f(x_n)-f_*
    =O\!\left(n^{-p}L\norm{x_0-x_*}^2\right),
    \quad
    p\coloneqq\log_2\rho\approx1.271553303,
    \quad
    \rho\coloneqq1+\sqrt2.
\end{align*}
Thus gradient descent is accelerated without momentum, averaging, or adaptivity. The construction also introduced the recursive operation underlying the present work: two certified schedules are placed on either side of one specially chosen joining step.

The Silver construction leaves two distinct horizon questions. The \emph{prescribed-horizon} problem asks for the best recursively certified schedule when the terminal horizon is known but is not one less than a power of two. The \emph{anytime} problem asks for one infinite schedule that accelerates at every stopping time. These problems cannot be conflated because long steps destroy monotonicity: strong guarantees at selected endpoints need not control intermediate iterates \citep{KornowskiShamir}. Recursively concatenated primitive blocks achieve the anytime rate $O(T^{-2p/(1+p)})=O(T^{-1.119\ldots})$ at every $T$ \citep{ZhangLeeDuChen}, and recent lower bounds show that the anytime problem has its own limitations \citep{TsaiEtAl}. This paper concerns the prescribed-horizon problem.

\paragraph{Composition at arbitrary horizons.}
Two concurrent frameworks systematized recursive schedule composition. \citet{ZhangJiang} introduced primitive, dominant, and g-bounded schedules and the ConPP, ConPD, and ConGP concatenations. Independently, \citet{GrimmerShuWangMOR} introduced s-, f-, and g-composable schedules and their binary joins. Optimizing within the basic classes generated by these joins produces the OBS-S, OBS-F, and OBS-G families. OBS-S extends the symmetric Silver construction to every length, whereas the asymmetric OBS-F and OBS-G families target the final objective gap and gradient norm.

The two symmetric constructions are especially closely related. Although their certificates differ, optimizing ConPP and optimizing OBS-S lead, after reciprocal and affine changes of variables, to the same scalar Bellman problem. In the leaf-count indexing $N=n+1$ used below,
\begin{align*}
    U_N
    \coloneqq1+\one^\top h_\circ^{(N-1)}
    =\frac{1}{\eta^{\mathrm{OBS\text{-}S}}(N-1)}.
\end{align*}
Here $h_\circ^{(n)}$ is an optimized recursively generated primitive schedule and $\eta^{\mathrm{OBS\text{-}S}}(n)$ is the optimized basic s-composable rate; the formal definitions and the stronger treewise conjugacy appear in Definitions \ref{def:optimized-primitive}, \ref{def:obs-s} and Proposition \ref{prop:scalar-equivalence}. The correspondence does not identify the two ambient certificate classes schedule by schedule.

Because the primitive objective certificate improves monotonically with the total stepsize, \citet{ZhangJiang} asked whether the global ConPP search always admits a balanced optimizer and predicted all three ties at odd dyadic multiples.

\begin{conjecture}[Conjecture~4.17 in \citet{ZhangJiang}]\label{conj:ZJ-intro}
For every $n\ge1$, it holds that:
\begin{align*}
    \one^\top h_\circ^{(n)}
    =\one^\top\operatorname{ConPP}\!\left(
    h_\circ^{(\floor{(n-1)/2})},
    h_\circ^{(\ceil{(n-1)/2})}
    \right).
\end{align*}
Moreover, if $n=\nu2^\ell-1$ with $\nu>1$ odd and $\ell\ge1$, then, for every $i\in\{-1,0,1\}$,
\begin{align*}
    \one^\top h_\circ^{(n)}
    =\one^\top\operatorname{ConPP}\!\left(
    h_\circ^{((\nu+i)2^{\ell-1}-1)},
    h_\circ^{((\nu-i)2^{\ell-1}-1)}
    \right).
\end{align*}
\end{conjecture}

The first assertion concerns existence, not uniqueness; the second specifies the additional predicted ties. \citet{ZhangJiang} verified both identities numerically through $n=10^4$. Away from dyadic horizons the claim is substantive because ConPP composition is nonassociative: proving it must control all root splits and, recursively, all composition trees.

Balanced recursion also exposes a second phenomenon. Exact dyadic scaling does not lead to one normalized asymptotic constant; instead, the normalized values are governed by a continuous periodic function of $\log_2N$. Such fluctuations are familiar for linear half-split recurrences \citep{HwangJansonTsai2024}, while the nonlinear midpoint rule appearing here is reminiscent of de Rham-type equations \citep{Okamura2016}. Neither theory directly covers the present nonlinear composition law.

\paragraph{Contributions.}
Our main contribution is a complete solution of the symmetric primitive/OBS-S composition problem (Conjecture \ref{conj:ZJ-intro}), together with an asymptotic analysis of both the symmetric and asymmetric optimized recursions. Specifically, the results are: 

\begin{enumerate}[label=(\roman*)]
\item \emph{Rearrangement and balanced optimality.}
We show that optimized primitive composition and basic s-composition are governed by the same scalar Bellman recursion. The key ingredient is a four-point rearrangement theorem for the composition kernel $\Kop$, including all equality cases. An abstract propagation argument then proves the supercomposition inequality and establishes balanced optimality at every horizon, thereby resolving a conjecture of Zhang and Jiang.

\item \emph{Complete optimizer structure and fast evaluation.}
We characterize equality in the supercomposition inequality and hence determine every maximizing root split. Recursively, this yields a complete characterization of all optimal composition trees in both the primitive and basic s-composable classes. The resulting binary self-similarity also gives a $\Theta(\log N)$, constant-memory algorithm for evaluating an individual Bellman value $U_N$ without computing the preceding table.

\item \emph{Exact dyadic phase and tight objective bounds.}
From the scaling identity $U_{2N}=\rho U_N$, where $\rho=1+\sqrt2$, we construct a unique positive, Lipschitz, nonconstant one-periodic profile $\Phi$ satisfying
\begin{align*}
    U_N=N^p\Phi\bigl(\{\log_2N\}\bigr),
    \quad p=\log_2\rho,
\end{align*}
where $\{\cdot\}$ takes the decimal part of a real. This representation determines the complete cluster intervals of the normalized scalar and OBS-S rate sequences. It also gives the exact log-periodic leading term and cluster interval of the tight primitive/OBS-S objective coefficient, showing that its $\Theta(N^{-p})$ decay has no single asymptotic constant.

\item \emph{Asymmetric OBS-F phase.}
For the objective-specific OBS-F recursion, whose optimizing split need not be balanced, we prove that the normalized f-composable rate converges uniformly over dyadic blocks to a unique positive, Lipschitz, nonconstant periodic profile. We further identify its complete cluster interval and prove strict separation from the sharp homogeneous support barrier. Thus the asymmetric recursion has the same decay exponent $p$ as the symmetric problem, but a distinct nonconstant phase modulation.

\end{enumerate}


\begin{remark}[Scope]\label{rem:scope}
Throughout, ``optimal'' means optimal within the stated recursively generated composition class. Our results do not establish exact finite-horizon minimax optimality among all predetermined schedules. Recent work of \citet{ye2026silverratealmostoptimal} shows, however, that the Silver exponent $p$ is the optimal polynomial convergence exponent among all predetermined nonnegative stepsize schedules in the prescribed-horizon setting. The OBS-F minimax conjecture of \citet{GrimmerShuWangMOR}, the exact finite-horizon minimax behavior beyond the recursive classes, and transfers to proximal or composite optimization \citep{BokAltschulerCOLT,BokAltschulerMP} remain separate questions.
\end{remark}

\paragraph{Organization.}
\cref{sec:scalar} derives the common scalar program. \cref{sec:four-point-rearrangement,sec:balanced} prove balanced optimality and classify all optimizers. \cref{sec:phase} develops the phase law and its tight objective consequences. \cref{sec:obsf} treats the asymmetric OBS-F recursion. Lengthy algebraic and regularity arguments are deferred to the appendices.

\paragraph{Formal verification.}
We provide Lean~4/mathlib proofs of the scalar and combinatorial statements corresponding to all theorems, propositions, lemmas, and corollaries in this paper. The proof code, statement-by-statement correspondence, dependency graphs, and reproducible checks are available in the companion repository \url{https://github.com/ck14527/gd-stepsize-lean/releases/tag/v1.0.0}. The resulting formalization was checked by the Lean kernel and independently audited for theorem correspondence, dependencies, and unintended axioms.
\section{Preliminaries}\label{sec:scalar}

This section introduces the two certificate-based composition frameworks used in the paper and derives their common scalar recursion. We retain the certificate statements needed to interpret the optimized schedules and their objective guarantees, while taking the closure theorems of \citet{ZhangJiang} and \citet{GrimmerShuWangMOR} as inputs. Throughout this section, we normalize the smoothness constant to $L=1$; the general factor $L$ is restored in \cref{sec:objective}.

\subsection{Certificate interfaces and recursive schedule classes}

Let $\F_1$ be the class of differentiable convex functions with $1$-Lipschitz gradient and a nonempty minimizer set. For a schedule $h=(h_0,\ldots,h_{n-1})^\top$ and the associated gradient-descent trajectory, write $f_i=f(x_i)$ and $g_i=\nabla f(x_i)$, and fix a minimizer $x_*$ with $f_*=f(x_*)$. Smooth convex interpolation gives
\begin{align*} 
    Q_{i,*}\coloneqq f_i-f_*-\langle g_i,x_i-x_*\rangle+\frac12\norm{g_i}^2\le0.
\end{align*}
Both frameworks below strengthen nonnegative combinations of these elementary inequalities by a structured terminal energy. The additional structure is what allows two certified schedules to be joined by inserting one generally long step.

\subsubsection{Primitive schedules and ConPP}

\citet{ZhangJiang} call a schedule \emph{primitive} when its certificate has the following form.

\begin{definition}[Definition~2.7 in \citet{ZhangJiang}]\label{def:primitive}
    A schedule $h\in\R_+^n$ is primitive if, with $H\coloneqq\one^\top h$, every $f\in\F_1$ and every associated trajectory satisfy
    \begin{align*} 
        \frac12\norm{x_0-x_*}^2+\sum_{i=0}^{n-1}h_iQ_{i,*}-H(f_n-f_*)
        \ge\frac12\norm{x_n-x_*}^2+\frac{H(H+1)}2\norm{g_n}^2.
    \end{align*}
    The empty schedule is primitive with $H=0$.
\end{definition}

The terminal distance--gradient energy above is the interface used by the composition theorem. It also yields the tight objective consequence
\begin{align} \label{eq:primitive-gap-imported}
    f_n-f_*\le\frac{1}{2H+1}\cdot\frac12\norm{x_0-x_*}^2,
\end{align}
with equality on a suitable one-dimensional Huber instance \citep[Theorem~2.6, Proposition~2.8, and Corollary~2.9]{ZhangJiang}. Thus, within this certificate class, maximizing the total stepsize $H$ is exactly equivalent to minimizing the certified objective coefficient.

If $a$ and $b$ are primitive, then their primitive--primitive concatenation
\begin{align*} 
    \operatorname{ConPP}(a,b)
    \coloneqq\!\[a^\top,\varphi\!\(\one^\top a,\one^\top b\)\!,b^\top\]^\top
\end{align*}
is primitive, where
\begin{align*} 
    \varphi(x,y)
    \coloneqq\frac{-x-y+\sqrt{(x+y+2)^2+4(x+1)(y+1)}}{2}.
\end{align*}
This is the ConPP theorem of \citet[Theorem~3.1]{ZhangJiang}. In particular, both the inserted step and the output sum depend on each child only through its total stepsize. We now record the optimized family in the notation of \citet[Definition~4.1]{ZhangJiang}.

\begin{definition}[Definition~4.1 in \citet{ZhangJiang}]\label{def:optimized-primitive}
    Set $h_\circ^{(0)}=[\,]^\top$. For $n\ge1$, choose
    \begin{align*} 
        h_\circ^{(n)}
        \in\argmax_{h\in\mathcal H_\circ^{(n)}}\one^\top h,
        \quad
        \mathcal H_\circ^{(n)}
        \coloneqq
        \left\{
        \operatorname{ConPP}\!\(h_\circ^{(k)},h_\circ^{(n-k-1)}\):
        0\le k<n
        \right\}.
    \end{align*}
    If the maximizer is not unique, any maximizing schedule may be selected.
\end{definition}

Since $x+y+\varphi(x,y)$ is strictly increasing in each argument, this recursion implements the principle of optimality: $h_\circ^{(n)}$ has maximum total stepsize among all $n$-step schedules generated recursively from the empty schedule by ConPP.

\subsubsection{s-composable schedules and OBS-S}

The s-composable framework of \citet{GrimmerShuWangMOR} uses a rate parameter $\eta$ that simultaneously matches the canonical quadratic and Huber worst cases. Its certificate is symmetric in the sense that it controls the final objective gap, distance, and gradient norm through the same parameter.

\begin{definition}[Definition~3 in \citet{GrimmerShuWangMOR}]\label{def:scomposable}
    A positive schedule $h=(h_0,\ldots,h_{n-1})^\top$ is s-composable with rate $\eta>0$ if for every $f\in\F_1$ and every associated trajectory, it holds that 
        \begin{align*}
        \frac{1-\eta}{2}\norm{g_n}^2+\frac{\eta^2}{2}\norm{x_n-x_*}^2
        +(\eta-\eta^2)(f_n-f_*)
        \le\frac{\eta^2}{2}\norm{x_0-x_*}^2,
    \end{align*}
    and
    \begin{align*}
        \eta=\frac{1}{1+\sum_{i=0}^{n-1}h_i}
        =\prod_{i=0}^{n-1}(h_i-1).
    \end{align*}
\end{definition}

The sum--product identity is part of the certificate interface and recovers $\eta$ from the total stepsize. The corresponding tight objective bound is
\begin{align}\label{eq:s-gap-imported}
    f_n-f_*
    \le\frac{\eta}{2-\eta}\cdot\frac12\norm{x_0-x_*}^2,
\end{align}
with matching quadratic and Huber instances \citep[Proposition~1]{GrimmerShuWangMOR}. Hence minimizing $\eta$ strengthens the certified objective guarantee.

If $a$ and $b$ are s-composable with rates $\alpha$ and $\beta$, respectively, their s-join is
\begin{align*} 
    a\bowtie b
    \coloneqq\!\[a^\top,\mu(\alpha,\beta),b^\top\]^\top,
    \quad
    \mu(\alpha,\beta)
    \coloneqq1+\frac{\sqrt{\alpha^2+6\alpha\beta+\beta^2}-(\alpha+\beta)}{2\alpha\beta}.
\end{align*}
The joined schedule is s-composable with rate
\begin{align*} 
    \Jop(\alpha,\beta)
    \coloneqq\frac{2\alpha\beta}{\alpha+\beta+\sqrt{\alpha^2+6\alpha\beta+\beta^2}}
\end{align*}
by \citet[Theorem~3]{GrimmerShuWangMOR}. Repeated s-joins from the empty schedule generate the basic s-composable family.

\begin{definition}[Optimized Basic s-composable Schedule (OBS-S) in \citet{GrimmerShuWangMOR}]\label{def:obs-s}
    Set $h^{\mathrm{OBS\text{-}S}}(0)=[\,]^\top$ and $\eta^{\mathrm{OBS\text{-}S}}(0)=1$. For $N\ge2$, define
    \begin{align*}
        \eta^{\mathrm{OBS\text{-}S}}(N-1)
        \coloneqq\min_{1\le m<N}
        \Jop\!\left(
        \eta^{\mathrm{OBS\text{-}S}}(m-1),
        \eta^{\mathrm{OBS\text{-}S}}(N-m-1)
        \right).
    \end{align*}
    If $m_N$ is any minimizing index, set
    \begin{align*}
        h^{\mathrm{OBS\text{-}S}}(N-1)
        \coloneqq
        h^{\mathrm{OBS\text{-}S}}(m_N-1)
        \bowtie
        h^{\mathrm{OBS\text{-}S}}(N-m_N-1).
    \end{align*}
\end{definition}

Thus $\eta^{\mathrm{OBS\text{-}S}}(N-1)$ is the minimum rate among all basic s-composable schedules with $N-1$ steps. Different minimizing splits may yield different schedules, but the optimized rate is unambiguous.

\subsection{The common scalar Bellman program}\label{sec:scalar-conjugacy}

We use the leaf-count convention to describe the structure of schedules. A schedule containing $n$ gradient steps can be represented by a full binary composition tree with $N=n+1$ leaves. Each leaf represents the empty schedule, and every internal node inserts one joining step between the schedules represented by its two children. 
Define
\begin{align*}
    H_N\coloneqq\one^\top h_\circ^{(N-1)},
    \quad
    s_N\coloneqq\eta^{\mathrm{OBS\text{-}S}}(N-1).
\end{align*}
Then, the empty schedule gives $H_1=0$ and $s_1=1$, and the two literature recursions become
\begin{align*}
    H_N=\max_{1\le m<N}
    \bigl(H_m+H_{N-m}+\varphi(H_m,H_{N-m})\bigr), \quad 
    s_N=\min_{1\le m<N}\Jop(s_m,s_{N-m}).
\end{align*}

We now introduce the symmetric composition law
\begin{align}\label{eq:Kdef}
    \Kop(x,y)
    \coloneqq\frac{x+y+\sqrt{x^2+6xy+y^2}}{2},
    \quad x,y\ge0.
\end{align}
Direct substitution gives the reciprocal-affine conjugacies
\begin{align}\label{eq:reciprocal-identities}
    \frac1{\Jop(1/x,1/y)}&=\Kop(x,y), \quad
    x+y+\varphi(x,y)+1=\Kop(x+1,y+1),
\end{align}
and 
\begin{align*}
    \mu\!\left(\frac1{x+1},\frac1{y+1}\right)=\varphi(x,y).
\end{align*}
Here the first identity is stated for $x,y>0$, while the remaining two hold for $x,y\ge0$.
The last identity shows that corresponding ConPP and s-join nodes insert the same numerical step, not merely that their optimized scalar values agree.

\begin{proposition}\label{prop:scalar-equivalence}
    For every $N\ge1$,
    \begin{align*}
        U_N\coloneqq H_N+1=\frac1{s_N}.
    \end{align*}
    The common value satisfies
    \begin{align}\label{eq:U-DP}
        U_1=1,
        \quad
        U_N=\max_{1\le m<N}\Kop(U_m,U_{N-m})
        \quad(N\ge2).
    \end{align}
\end{proposition}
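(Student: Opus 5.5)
The plan is a strong induction on $N$ that treats both recursions in parallel, pushing each through the conjugacies \eqref{eq:reciprocal-identities} and the monotonicity of $\Kop$.

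\emph{Base case.} For $N=1$ we have $H_1=0$ and $s_1=1$, so $H_1+1=1=1/s_1$, which gives $U_1=1$.

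\emph{Primitive recursion.} Fix $N\ge2$ and assume $H_m+1=1/s_m=U_m$ for all $m<N$. By the second identity in \eqref{eq:reciprocal-identities}, which holds for all $x,y\ge0$ and so applies since $H_m\ge0$, each candidate in the recursion satisfies
\begin{align*}
H_m+H_{N-m}+\varphi(H_m,H_{N-m})+1=\Kop(H_m+1,H_{N-m}+1)=\Kop(U_m,U_{N-m}).
\end{align*}
Adding $1$ to the maximum over $1\le m<N$ therefore gives $H_N+1=\max_{1\le m<N}\Kop(U_m,U_{N-m})$.

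\emph{s-composable recursion.} Here I first need $s_m>0$ for every $m$. This follows inductively because $\Jop(\alpha,\beta)>0$ whenever $\alpha,\beta>0$; alternatively, it is part of the definition of an s-composable rate. The first identity in \eqref{eq:reciprocal-identities} then gives
\begin{align*}
\Jop(s_m,s_{N-m})=\Jop(1/U_m,1/U_{N-m})=\frac{1}{\Kop(U_m,U_{N-m})}.
\end{align*}
The map $t\mapsto1/t$ is strictly decreasing on $(0,\infty)$, so minimizing the left-hand side over $m$ is the same as maximizing $\Kop(U_m,U_{N-m})$. Hence $1/s_N=\max_{1\le m<N}\Kop(U_m,U_{N-m})$.

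\emph{Conclusion.} The two expressions agree, which closes the induction and yields \eqref{eq:U-DP}. As a by-product, the two recursions are optimized by the same set of split indices $m$.

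\emph{Main obstacle.} The only real work is checking the conjugacy identities, which the paper states follow by direct substitution. For the first identity, substitute $\alpha=1/x$ and $\beta=1/y$ into $\Jop$ and multiply numerator and denominator by $xy$. This gives
\begin{align*}
\Jop(1/x,1/y)=\frac{2}{x+y+\sqrt{x^2+6xy+y^2}}=\frac{1}{\Kop(x,y)}.
\end{align*}
For the second identity, set $X=x+1$ and $Y=y+1$, so that $x+y+2=X+Y$. Then $(X+Y)^2+4XY=X^2+6XY+Y^2$, and
\begin{align*}
x+y+1+\varphi(x,y)=\frac{x+y+2+\sqrt{X^2+6XY+Y^2}}{2}=\Kop(X,Y).
\end{align*}
Both calculations are routine, and I would write them out in full to be safe.
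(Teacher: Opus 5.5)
Your proof is correct and follows the paper's own argument: an induction showing that $H_N+1$ and $1/s_N$ satisfy the same recursion and initial condition, via the conjugacy identities \eqref{eq:reciprocal-identities}. You also spell out details the paper leaves implicit, namely the positivity of $s_m$, the order reversal of $t\mapsto 1/t$ that turns the minimum into a maximum, and the direct verification of both identities.
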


\begin{proof}
    The identities in \eqref{eq:reciprocal-identities} show that both $1/s_N$ and $H_N+1$ satisfy the same recursion and initial condition. Induction then gives $H_N+1=1/s_N$ for all integers $N \geq 1$. 
\end{proof}

Equivalently, assign value one to each leaf of an ordered full binary tree and apply $\Kop$ at every internal node. Then $U_N$ is the largest root value among all such trees with $N$ leaves. The objective interfaces also coincide after this change of variables:
\begin{align}\label{eq:common-objective-coefficient}
    \frac{1}{2H_N+1}
    =\frac{s_N}{2-s_N}
    =\frac{1}{2U_N-1}.
\end{align}
Thus maximizing $U_N$ simultaneously optimizes the tight primitive and OBS-S objective coefficients over their respective recursively generated classes. This correspondence does not identify the ambient classes of all primitive and all s-composable schedules; it concerns the symmetric families generated by ConPP and the s-join from the empty schedule, which is the scope used throughout the paper.

\subsection{Properties of the composition law}

Now we give some basic properties of the composition law $\Kop$ that will be used in the remainder of the paper. 

\begin{proposition}\label{lem:K-properties}
    The map $\Kop:[0,+\infty)^2\to[0,+\infty)$ is continuous, symmetric, positively homogeneous, and strictly increasing in each coordinate. With
    \begin{align*}
        \rho\coloneqq1+\sqrt2,
    \end{align*}
    it satisfies
    \begin{align}\label{eq:K-special}
        \Kop(0,x)=x,
        \quad
        \Kop(x,x)=\rho x,
        \quad
        \Kop(x,y)>x+y>\max\{x,y\}
        \quad(x,y>0).
    \end{align}
    Moreover, for $x,y,x',y'>0$,
    \begin{align}\label{eq:ratio-sandwich}
        \min\!\left\{\frac{x'}x,\frac{y'}y\right\}
        \le\frac{\Kop(x',y')}{\Kop(x,y)}
        \le\max\!\left\{\frac{x'}x,\frac{y'}y\right\},
    \end{align}
    and both inequalities are strict if $x'/x\ne y'/y$.
\end{proposition}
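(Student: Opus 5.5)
The proof is a direct verification from the formula \eqref{eq:Kdef}. For $x,y\ge0$ we have $x^2+6xy+y^2\ge0$, so $\Kop$ is well defined, continuous, and nonnegative. Symmetry is immediate. Positive homogeneity follows from $\sqrt{t^2(x^2+6xy+y^2)}=t\sqrt{x^2+6xy+y^2}$ for $t>0$.

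For strict monotonicity in $x$, we compute
\begin{align*}
\partial_x\Kop=\tfrac12\bigl(1+(x+3y)/\sqrt{x^2+6xy+y^2}\bigr).
\end{align*}
This is positive wherever the root is nonzero, since $x+3y\ge0$. At the origin, monotonicity along the axis follows from $\Kop(x,0)=x$. By symmetry, the same holds in $y$.

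The special values in \eqref{eq:K-special} are substitutions. First, $\Kop(0,x)=(x+|x|)/2=x$. Second, $\Kop(x,x)=(2x+\sqrt8\,x)/2=(1+\sqrt2)x=\rho x$. For $x,y>0$, we have $x^2+6xy+y^2>(x+y)^2$, so $\sqrt{x^2+6xy+y^2}>x+y$ and hence $\Kop(x,y)>x+y$. The bound $x+y>\max\{x,y\}$ is trivial.

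For the ratio sandwich \eqref{eq:ratio-sandwich}, I would combine homogeneity with monotonicity. Let $\lambda=\min\{x'/x,y'/y\}$ and $\Lambda=\max\{x'/x,y'/y\}$. Then $\lambda x\le x'$ and $\lambda y\le y'$, so
\begin{align*}
\lambda\Kop(x,y)=\Kop(\lambda x,\lambda y)\le\Kop(x',y').
\end{align*}
Symmetrically, $\Kop(x',y')\le\Kop(\Lambda x,\Lambda y)=\Lambda\Kop(x,y)$.

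If $x'/x\ne y'/y$, then in the lower comparison exactly one coordinate is strictly smaller: say $\lambda x=x'$ but $\lambda y<y'$. Strict monotonicity in that coordinate, with the other coordinate positive, gives strict inequality. The upper bound is handled the same way. Dividing by $\Kop(x,y)>0$ then yields \eqref{eq:ratio-sandwich}.

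The only delicate point is strict monotonicity near the boundary, where the derivative formula involves a square root that may vanish. I handle this by noting that the root vanishes only at $(0,0)$ on the closed quadrant. In the ratio argument all arguments are positive, so the derivative formula applies without issue.
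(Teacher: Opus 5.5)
Your proof is correct and follows essentially the same route as the paper. You verify the basic properties and the special values in \eqref{eq:K-special} directly from \eqref{eq:Kdef}, get strict monotonicity by differentiation, and obtain \eqref{eq:ratio-sandwich} from the coordinatewise sandwich $(\lambda x,\lambda y)\le(x',y')\le(\Lambda x,\Lambda y)$ together with homogeneity and strict monotonicity. Your boundary treatment (the radicand vanishes only at the origin, so $\partial_x\Kop\ge 1/2$ elsewhere, and $\Kop(x,0)=x$ covers the edge $y=0$) is just a more explicit version of the paper's ``differentiation in the positive quadrant and continuity at the boundary.''
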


\begin{proof}
    The first assertions follow directly from the definition of $\Kop$ in \eqref{eq:Kdef}. Strict monotonicity follows by differentiation in the positive quadrant and continuity at the boundary. The identities in \eqref{eq:K-special} can be verified directly from \eqref{eq:Kdef}. Let $\lambda$ and $\Lambda$ be the minimum and maximum of $x'/x$ and $y'/y$. Then
    \begin{align*}
        (\lambda x,\lambda y)\le(x',y')\le(\Lambda x,\Lambda y)
    \end{align*}
    coordinatewise. Monotonicity and homogeneity give \eqref{eq:ratio-sandwich}, with strictness when $\lambda<\Lambda$.
\end{proof}
\section{A sharp four-point rearrangement theorem} \label{sec:four-point-rearrangement}

In this section, we will prove the central structural ingredient in our analysis, which is a nonlinear rearrangement principle for the binary operation $\mathcal K$. It shows that, among the three ways of pairing four ordered inputs, pairing the two extremes at the lower level produces the largest value after one further composition. The equality cases will be equally important and will allow us to distinguish forced balanced splits from genuine ties.

\begin{theorem} \label{thm:four-point-rearrangement}
For every $0\leq a\leq b\leq c\leq d$,
\begin{align}
    \mathcal K\bigl(\mathcal K(a,b),\mathcal K(c,d)\bigr) \leq \mathcal K\bigl(\mathcal K(a,c),\mathcal K(b,d)\bigr) \leq \mathcal K\bigl(\mathcal K(a,d),\mathcal K(b,c)\bigr).
    \label{eq:four-point-chain}
\end{align}
Moreover,
\begin{align}
    \mathcal K\bigl(\mathcal K(a,b),\mathcal K(c,d)\bigr) = \mathcal K\bigl(\mathcal K(a,c),\mathcal K(b,d)\bigr) & \quad\Longleftrightarrow\quad b=c\ \text{or}\ ad=bc, \label{eq:first-equality-case}\\
    \mathcal K\bigl(\mathcal K(a,c),\mathcal K(b,d)\bigr) = \mathcal K\bigl(\mathcal K(a,d),\mathcal K(b,c)\bigr) & \quad\Longleftrightarrow\quad a=b\ \text{or}\ c=d. \label{eq:second-equality-case}
\end{align}
\end{theorem}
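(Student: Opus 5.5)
We will exploit the quadratic structure of $\Kop$. Writing $z=\Kop(x,y)$, the defining formula \eqref{eq:Kdef} is the larger root of
\begin{align*}
    z^2-(x+y)z-2xy=0,
\end{align*}
that is, $z$ solves $(z-x)(z-y)=3xy$. Equivalently, $\Kop(x,y)=\max\{z\ge0:\ z^2-(x+y)z-2xy\le0\}$. A convenient consequence is that $\Kop(u,v)\le\Kop(u',v')$ holds if and only if $Z:=\Kop(u',v')$ satisfies $Z^2-(u+v)Z-2uv\ge0$. So each comparison in \eqref{eq:four-point-chain} reduces to checking the sign of a single expression in the inner values.

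\textbf{Step 1 (homogeneity and boundary).} By positive homogeneity we may normalize, e.g.\ $d=1$ or $a+b+c+d=1$. If $a=0$, then $\Kop(0,x)=x$ turns the chain into $\Kop(b,\Kop(c,d))\le\Kop(c,\Kop(b,d))\le\Kop(d,\Kop(b,c))$. This case should follow from the same argument, or could be treated separately as a three-point statement.

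\textbf{Step 2 (one swap at a time).} Both inequalities have the form ``swap one middle element between the two pairs.'' For the second inequality, compare pairs $\{a,c\},\{b,d\}$ with $\{a,d\},\{b,c\}$, swapping $c\leftrightarrow d$. For the first, compare $\{a,b\},\{c,d\}$ with $\{a,c\},\{b,d\}$, swapping $b\leftrightarrow c$.

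It is cleaner to parametrize via the root representation. Set $P=\Kop(a,c)$ and $Q=\Kop(b,d)$. Consider $F(t)=\Kop(\Kop(a,t),\Kop(b,s-t))$, or more generally view the outer value as a function of the pair structure. I would first try a direct algebraic route:
\begin{itemize}
    \item Compute $S_1=P+Q$ and $P_1=PQ$ for one pairing, and $S_2,P_2$ for the other.
    \item Show the outer root of the first pairing satisfies the quadratic inequality of the second.
\end{itemize}
The obstacle is that $P,Q$ are themselves square-root expressions. To remove them, I will use the implicit equations $(P-a)(P-c)=3ac$ and similar ones, treating all inner values as algebraic variables. Resultant-style elimination should then give a polynomial inequality in $a,b,c,d$.

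\textbf{Alternative (smoother) route.} Differentiate. Fix the multiset and consider exchanging mass continuously: replace $(c,d)$ by $(c+\tau,d-\tau)$ and track monotonicity in $\tau$. For the outer map $G=\Kop(u,v)$, the sign of $\frac{d}{d\tau}$ equals the sign of $\partial_1\Kop(u,v)\,\partial_2\Kop(a,\cdot)-\partial_1\Kop(v,u)\,\partial_1\Kop(\cdot,b)$. This reduces to comparing ratios of partial derivatives of $\Kop$. Since $\Kop$ is homogeneous of degree one, $\partial_1\Kop(x,y)=k(y/x)$ for a one-variable function $k$. The needed inequality then becomes a monotonicity or log-concavity property of $k$, which I would verify by one-variable calculus.

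\textbf{Main obstacle: equality cases.} Strictness of these derivative inequalities away from degeneracy must be extracted carefully. The case $ad=bc$ in \eqref{eq:first-equality-case} is a genuine interior tie: there $a/b=c/d$, and then $\Kop(a,b)$ and $\Kop(c,d)$ are proportional to $\Kop(a,c)$ and $\Kop(b,d)$ by homogeneity. Concretely, set $b=\lambda a$ and $d=\lambda c$. Then $\Kop(a,b)=a\Kop(1,\lambda)$ and $\Kop(c,d)=c\Kop(1,\lambda)$, while $\Kop(a,c)$ and $\Kop(b,d)=\lambda\Kop(a,c)$. Both outer values should then be products, e.g.\ $\Kop(1,\lambda)\Kop(a,c)$ versus $\Kop(a,c)\Kop(1,\lambda)$, which are equal. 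So the polynomial obtained from elimination should factor as $(c-b)(ad-bc)\times(\text{positive})$ for the first inequality and $(b-a)(d-c)\times(\text{positive})$ for the second. I would aim to exhibit these factorizations explicitly, likely by computer-assisted symbolic algebra. Proving positivity of the remaining factor on the ordered cone is the hardest step. I would attempt it by substituting $a=x_1$, $b=x_1+x_2$, etc., with $x_i\ge0$, and hoping all coefficients are nonnegative.
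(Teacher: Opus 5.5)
Your reduction of each comparison to the sign of one outer quadratic evaluated at the other pairing's root is the right starting point. Your homogeneity check that $ad=bc$ forces equality in the first comparison is also correct, though it gives only sufficiency. But the proposal has a genuine gap, and two of its concrete predictions are wrong.

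First, a slip: $\Kop(x,y)$ is the larger root of $z^2-(x+y)z-xy=0$, i.e.\ $(z-x)(z-y)=2xy$, not $3xy$. Your equation gives $\Kop(1,1)=1+\sqrt3$ instead of $\rho=1+\sqrt2$.

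More importantly, the factorization you aim for, $(c-b)(ad-bc)\times(\text{positive})$, cannot exist. With $b<c$ and $a<d$, the quantity $ad-bc$ takes both signs on the ordered cone, e.g.\ at $(a,b,c,d)=(1,2,3,5)$ and $(1,2,3,7)$. Such a factor would therefore reverse the inequality on one side of $\{ad=bc\}$, contradicting the statement. The tie $ad=bc$ is tangential, so the sign-determining quantity must vanish there to even order. In the paper it is $(av-cu)(bv-du)$ with $u=\Kop(a,b)$, $v=\Kop(c,d)$. By homogeneity, $av-cu=\Kop(ac,ad)-\Kop(ac,bc)$ has the sign of $ad-bc$, while $bv-du=\Kop(bc,bd)-\Kop(ad,bd)$ has the opposite sign. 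So the product is $\le0$, with equality iff $ad=bc$. This is not a polynomial in $a,b,c,d$ alone, so substituting $a=x_1$, $b=x_1+x_2,\dots$ and hoping for nonnegative coefficients is not a workable mechanism.

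Second, resultant-style elimination does not by itself yield an inequality. Multiplying over conjugate choices of the inner radicals destroys sign information unless you control the signs of the non-principal conjugate factors, and your proposal never does this. The paper supplies exactly the missing pieces:
(i) for $z>\max\{\Kop(a,b),\Kop(c,d)\}$, the degree-eight conjugate polynomial $P_{ab\mid cd}(z)$ has the sign of $z-\Kop(\Kop(a,b),\Kop(c,d))$, because the other three quadratic factors are positive there;
(ii) $P$ depends on the pairing only through the sum $S$ of the two pair products (e.g.\ $S=ab+cd$ for $(a,b)\mid(c,d)$), which yields $P_{ac\mid bd}-P_{ab\mid cd}=2z^2(d-a)(c-b)Q_{ad+bc}(z)$ and $P_{ad\mid bc}-P_{ac\mid bd}=2z^2(b-a)(d-c)Q_{ab+cd}(z)$;
(iii) reducing modulo the outer and inner quadratics gives $Q_E(z_0)=2(abv^2+cdu^2-Euv)(uv+uz_0+vz_0)/(uv)$ at $z_0=\Kop(u,v)$.
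Evaluating $P_{ac\mid bd}$ at $z_0$, where $P_{ab\mid cd}$ vanishes, then gives both the inequality and the strictness needed for the ``only if'' directions.

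Your derivative alternative also fails as stated. The map $\tau\mapsto\Kop(\Kop(a,c+\tau),\Kop(b,d-\tau))$ is not monotone on $[0,d-c]$. For $a=b=0$ it equals $\Kop(c+\tau,d-\tau)$, which rises and then falls symmetrically, and by continuity the same happens for $0=a<b$ small. So the endpoint comparison cannot come from a one-signed derivative.
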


The proof is based on an algebraic sign certificate. For $x,y\geq0$, define the two conjugate roots
\begin{align*}
    r_{\pm}(x,y) \coloneqq \frac{x+y\pm\sqrt{x^2+6xy+y^2}}{2}.
\end{align*}
They are the roots of
\begin{align*}
    r^2-(x+y)r-xy=0,
\end{align*}
with $r_+(x,y)=\mathcal K(x,y)$ and $r_-(x,y)\leq0$. For the pairing $(a,b)\mid (c,d)$, where $a,b,c,d \geq 0$, define
\begin{align}
    P_{ab\mid cd}(z) \coloneqq \prod_{\sigma,\tau\in\{+,-\}} \Bigl[z^2-\bigl(r_\sigma(a,b)+r_\tau(c,d)\bigr)z-r_\sigma(a,b)r_\tau(c,d)\Bigr].
    \label{eq:conjugate-polynomial}
\end{align}
The four quadratic factors account for every choice of the two inner conjugate roots. In particular, the distinguished positive root associated with this pairing is
\begin{align*}
    Z_{ab\mid cd} \coloneqq \mathcal K\bigl(\mathcal K(a,b),\mathcal K(c,d)\bigr).
\end{align*}
The first lemma explains how the sign of the degree-eight polynomial recovers the position of this root once $z$ lies beyond the two positive inner roots. The proof is deferred to \cref{app:sign-beyond-inner-roots}. 

\begin{lemma} \label{lem:sign-beyond-inner-roots}
Let $a,b,c,d\geq0$ and let $P_{ab\mid cd}$ be defined in \eqref{eq:conjugate-polynomial}. Define
\begin{align*}
    M_{ab\mid cd} \coloneqq \max\{\mathcal K(a,b),\mathcal K(c,d)\}.
\end{align*}
Then, for every $z>M_{ab\mid cd}$,
\begin{align*}
    \operatorname{sgn}P_{ab\mid cd}(z) = \operatorname{sgn}\bigl(z-Z_{ab\mid cd}\bigr).
\end{align*}
\end{lemma}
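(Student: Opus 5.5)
We determine the sign of each of the four quadratic factors of $P_{ab\mid cd}$ separately on the half-line $z>M_{ab\mid cd}$. Write $u_\pm=r_\pm(a,b)$ and $v_\pm=r_\pm(c,d)$, so that $u_+=\Kop(a,b)\ge0$, $v_+=\Kop(c,d)\ge0$ and $u_-,v_-\le0$. Each factor has the form $q_{s,t}(z)=z^2-(s+t)z-st$ with $(s,t)\in\{u_\pm\}\times\{v_\pm\}$, and its roots are $r_\pm(s,t)$ whenever the discriminant $(s+t)^2+4st=s^2+6st+t^2$ is nonnegative. The plan is to show that the three factors with at least one minus index are strictly positive for $z>M$. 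It then remains to show that the $(+,+)$ factor has the sign of $z-Z_{ab\mid cd}$. The lemma follows immediately from these two facts.

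\emph{The $(+,+)$ factor.} We have $q_{u_+,v_+}(z)=(z-\Kop(u_+,v_+))(z-r_-(u_+,v_+))$, where $\Kop(u_+,v_+)=Z_{ab\mid cd}$ and $r_-(u_+,v_+)\le 0<z$. The second factor is therefore positive, and the sign of $q_{u_+,v_+}(z)$ equals the sign of $z-Z$. This step needs $z>0$, which holds because $z>M\ge0$.

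\emph{The mixed factors.} We write $q_{s,t}(z)=z^2-sz-tz-st=(z-s)(z-t)-2st$, rearranged so that we can argue directly without roots. Take $s=u_+\ge0$ and $t=v_-\le0$. Then $z-s>0$ because $z>M\ge u_+$, and $z-t\ge z>0$. Also $-2st=-2u_+v_-\ge0$. Hence $q_{u_+,v_-}(z)>0$, and the case $(u_-,v_+)$ is symmetric. For the $(-,-)$ factor, with $s,t\le0$, we have $z^2-(s+t)z\ge z^2>0$ since $z>0$, but the term $-st\le0$ works against us. Instead we use $(z-s)(z-t)-2st$ together with $z-s\ge z-s$. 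Set $\sigma=-s\ge0$ and $\tau=-t\ge0$. Then
\[
q_{s,t}(z)=(z+\sigma)(z+\tau)-2\sigma\tau=z^2+(\sigma+\tau)z-\sigma\tau .
\]
It suffices to show $\sigma\le z$ and $\tau\le z$, because then $z^2\ge\sigma\tau$ and the expression is positive. We have $\sigma=|r_-(a,b)|=ab/\Kop(a,b)\le\Kop(a,b)\le M<z$, using the product-of-roots relation $r_+r_-=-xy$ and $ab\le\Kop(a,b)^2$, which holds because $\Kop(a,b)\ge\max\{a,b\}$. The same argument gives $\tau<z$.

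The main obstacle is this bookkeeping for the negative conjugate roots. One must make sure the bound $|r_-(x,y)|\le\Kop(x,y)$ is used instead of trying to factor the mixed quadratics, whose discriminants $s^2+6st+t^2$ can be negative when $st<0$. Degenerate cases, where some of $a,b,c,d$ vanish so that $u_-=0$ or $M=0$, cause no difficulty, because every inequality used above is either weak or strict for the right reason once $z>M\ge0$. Multiplying the four signs then gives $\operatorname{sgn}P_{ab\mid cd}(z)=\operatorname{sgn}(z-Z_{ab\mid cd})$.
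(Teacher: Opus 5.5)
Your proof is correct and follows essentially the same route as the paper: the $(+,+)$ factor carries the sign of $z-Z_{ab\mid cd}$, and the other three factors are strictly positive for $z>\max\{\Kop(a,b),\Kop(c,d)\}$ because each negative conjugate root satisfies $|r_-(x,y)|\le r_+(x,y)$. The differences are cosmetic. The paper obtains $|r_-|\le r_+$ from Vieta's sum relation $r_++r_-=x+y\ge0$ rather than from the product relation, which also avoids your $0/0$ when $a=b=0$. It also writes the mixed factors as $z(z-u)+t(z+u)$ instead of $(z-s)(z-t)-2st$.
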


Let $e_j=e_j(a,b,c,d)$ denote the elementary symmetric polynomial of degree $j$ in the four inputs. For a scalar $E$, define the auxiliary quartic
\begin{align}
    Q_E(z)\coloneqq z^4-e_1z^3-(e_2+2E)z^2-e_3z+e_4.
    \label{eq:auxiliary-quartic}
\end{align}
The next identity isolates the effect of changing the lower-level pairing. Its factorization is the algebraic core of the rearrangement argument: the order-dependent factors are explicit, while all remaining sign information is concentrated in a single quartic.  A complete coefficient-level derivation is given in \cref{app:pairing-identities}.

\begin{lemma} \label{lem:pairing-differences}
For all $a,b,c,d\geq 0$,
\begin{align*}
    P_{ac\mid bd}(z)-P_{ab\mid cd}(z) & = 2z^2(d-a)(c-b)Q_{ad+bc}(z), \\
    P_{ad\mid bc}(z)-P_{ac\mid bd}(z) & = 2z^2(b-a)(d-c)Q_{ab+cd}(z). 
\end{align*}
\end{lemma}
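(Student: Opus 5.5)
The plan is to avoid brute-force expansion of the degree-eight polynomial by exploiting Vieta relations. First I will show that $P_{ab\mid cd}(z)$ depends on the pairing only through the single scalar $E\coloneqq ab+cd$. Then I will show that its dependence on $E$ is at most quadratic. The two identities then reduce to a one-variable difference computation.

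\textbf{Step 1 (eliminate the inner roots).} Fix $u=r_\sigma(a,b)$ and take the product over $\tau$ first. Write the factor as $A-Bv$ with $A\coloneqq z^2-uz$, $B\coloneqq z+u$ and $v=r_\tau(c,d)$. Since $r_\pm(c,d)$ are the roots of $v^2-(c+d)v-cd$, we have $\sum_\tau v=c+d$ and $\prod_\tau v=-cd$. Hence
\begin{align*}
    \prod_{\tau}(A-Bv)=A^2-(c+d)AB-cd\,B^2 .
\end{align*}
This is a polynomial in $z$ and $u$. I will then take the product of this expression over the two roots $u=r_\pm(a,b)$, using $\sum_\sigma u=a+b$ and $\prod_\sigma u=-ab$. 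The result is a polynomial in $z$ with coefficients that are polynomials in $s_1=a+b$, $p_1=ab$, $s_2=c+d$, $p_2=cd$. It is invariant under swapping $(s_1,p_1)\leftrightarrow(s_2,p_2)$, because $P$ is symmetric under exchanging the two pairs.

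\textbf{Step 2 (reduce to the pairing invariant).} Several combinations are independent of the pairing: $s_1+s_2=e_1$, $p_1p_2=e_4$, $s_1p_2+s_2p_1=e_3$, and $s_1s_2+p_1+p_2=e_2$. Setting $E=p_1+p_2$, we get $s_1s_2=e_2-E$. Every symmetric polynomial in the two pairs can be rewritten in $e_1,\dots,e_4$ and $E$, for example $p_1^2+p_2^2=E^2-2e_4$ and $s_1^2+s_2^2=e_1^2-2(e_2-E)$. A degree count on the result of Step 1 should give
\begin{align*}
    P_{ab\mid cd}(z)=\alpha(z)+\beta(z)E+\gamma(z)E^2,
\end{align*}
where $\alpha,\beta,\gamma$ depend only on $e_1,\dots,e_4$. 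From the identities to be proved I expect $\gamma(z)=-2z^2$ and $\beta(z)$ to be $2z^2$ times a quartic in $z$. These are the only coefficients that need to be computed explicitly. This bookkeeping, extracting the $E$-linear and $E$-quadratic coefficients of the degree-eight product without sign errors, is the main obstacle. I would organize it by tracking only the terms containing $p_1+p_2$ or $s_1s_2$, and check the result at a degenerate point such as $a=0$.

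\textbf{Step 3 (differences).} Let $E_1=ab+cd$, $E_2=ac+bd$, $E_3=ad+bc$. Then
\begin{align*}
    P_{ac\mid bd}-P_{ab\mid cd}=(E_2-E_1)\bigl(\beta+\gamma(E_1+E_2)\bigr).
\end{align*}
Here $E_2-E_1=(c-b)(a-d)$ and $E_1+E_2=e_2-E_3$. Substituting $\gamma=-2z^2$, the second factor becomes $-2z^2\bigl(z^4-e_1z^3-(e_2+2E_3)z^2-e_3z+e_4\bigr)$ once $\beta$ is identified. Combined with $E_2-E_1=(c-b)(a-d)$, this gives $2z^2(d-a)(c-b)Q_{ad+bc}(z)$, which is the first identity. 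The second identity follows in the same way with $E_3-E_2=(b-a)(d-c)$ and $E_2+E_3=e_2-E_1$, giving $Q_{ab+cd}$. Once $\beta$ and $\gamma$ are pinned down in Step 2, matching $\beta+\gamma(e_2-E)=2z^2\,Q_E(z)\cdot(-1)$ is a single coefficient comparison.
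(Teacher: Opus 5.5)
Your route is the paper's: it also eliminates the inner roots by Vieta, obtaining $L(z)u^2+M(z)u+N(z)$ and then the product over $u_\pm$. It rewrites every coefficient in $e_1,\dots,e_4$ and the single pairing scalar $S=B+D$ (your $E$), with $A=a+b$, $B=ab$, $C=c+d$, $D=cd$. It finds the result quadratic in $S$ and takes differences exactly as in your Step~3. But the lemma consists of nothing more than the coefficient computation you defer, and both concrete values you commit to are wrong.

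\textbf{The value of $\gamma$.} Your $\gamma(z)=-2z^2$ cannot be right. $P$ is homogeneous of degree $8$ in $(a,b,c,d,z)$ and $E$ has degree $2$, so a term $cz^kE^2$ with numerical $c$ forces $k=4$. Matching the $E$-coefficient in your own target identity $\beta+\gamma(e_2-E)=-2z^2Q_E(z)$ gives $\gamma(z)=-4z^4$. This is what the paper finds in the $z^4$ coefficient: using $AC=e_2-S$, $A^2D+BC^2=e_1e_3-AC\cdot S$ and $B^2+D^2-12BD=S^2-14e_4$, that coefficient becomes $-e_2^2-2e_1e_3-14e_4+6e_2S-4S^2$. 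The other $S$-dependent contributions are:
\begin{itemize}
\item $-2S$ at $z^6$;
\item $2(e_1S-e_3)$ at $z^5$;
\item $2(e_3S-e_1e_4)$ at $z^3$;
\item $e_3^2-2e_4S$ at $z^2$.
\end{itemize}
Hence
\begin{align*}
    \beta(z)=-2z^6+2e_1z^5+6e_2z^4+2e_3z^3-2e_4z^2,
    \quad
    \gamma(z)=-4z^4.
\end{align*}
With these, $\beta+\gamma(e_2-E)=-2z^2Q_E(z)$ holds identically in $E$. With $\gamma=-2z^2$ it fails, so Step~3 as written does not close.

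Homogeneity alone would also allow an $E^3$ term in the $z^2$ coefficient. So the quadratic dependence on $E$ comes out of the explicit rewriting, not out of a degree count.

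\textbf{A sign in Step~3.} You have $E_3-E_2=(ad+bc)-(ac+bd)=(a-b)(d-c)=-(b-a)(d-c)$, not $(b-a)(d-c)$. With your sign the second identity would come out negated.

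With these corrections and the extraction actually carried out, your argument is complete and matches the paper's.
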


The following evaluation formula determines the sign of the quartic at the distinguished positive root without solving the degree-eight equation. Notice that the factor $(uv+uz+vz)/(uv)$ is positive, hence only the quadratic-form expression $abv^2+cdu^2-Euv$ matters. The full reduction is deferred to \cref{app:quartic-evaluation}.

\begin{lemma} \label{lem:quartic-evaluation}
Fix a pairing $(a,b)\mid(c,d)$, where $a,b,c,d\geq 0$, and set
\begin{align*}
    u=\mathcal K(a,b), \quad v=\mathcal K(c,d),\quad z=\mathcal K(u,v).
\end{align*}
If $u,v>0$, then, for every scalar $E$,
\begin{align*}
    Q_E(z)=\frac{2\bigl(abv^2+cdu^2-Euv\bigr)(uv+uz+vz)}{uv},
\end{align*}
where $Q_E$ is formed from the four inputs $a,b,c,d$.
\end{lemma}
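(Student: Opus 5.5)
The plan is to pass to the two inner quadratics and reduce everything to a polynomial identity in $u$, $v$, $z$ and the two inner products. Set
\begin{align*}
    p\coloneqq ab,\quad q\coloneqq cd,\quad s_1\coloneqq a+b,\quad s_2\coloneqq c+d.
\end{align*}
Since $u=r_+(a,b)$, $v=r_+(c,d)$ and $z=r_+(u,v)$ are roots of the defining quadratics, and $u,v>0$, we have
\begin{align*}
    s_1=u-\frac{p}{u},\quad s_2=v-\frac{q}{v},\quad z^2=(u+v)z+uv.
\end{align*}
The elementary symmetric polynomials of the four inputs are then
\begin{align*}
    e_1=s_1+s_2,\quad e_2=p+q+s_1s_2,\quad e_3=ps_2+qs_1,\quad e_4=pq.
\end{align*}
So $Q_E(z)$ becomes a rational expression in $u,v,p,q,E,z$, with $z$ constrained only by its quadratic relation.

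The first simplification is to rewrite the right-hand side. The relation for $z$ gives $uv+uz+vz=z^2$. Also
\begin{align*}
    \frac{abv^2+cdu^2-Euv}{uv}=\frac{pv}{u}+\frac{qu}{v}-E.
\end{align*}
Hence the claim reads
\begin{align*}
    Q_E(z)=2z^2\Bigl(\frac{pv}{u}+\frac{qu}{v}-E\Bigr).
\end{align*}
Both sides are affine in $E$, and the $E$-coefficient is $-2z^2$ on each side. The statement therefore reduces to the $E$-free identity
\begin{align*}
    Q_0(z)=2z^2\Bigl(\frac{pv}{u}+\frac{qu}{v}\Bigr).
\end{align*}

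To prove this, I will organize $Q_0$ around the factored product
\begin{align*}
    (z^2-s_1z-p)(z^2-s_2z-q)=z^4-e_1z^3+(s_1s_2-p-q)z^2+e_3z+e_4.
\end{align*}
Comparing coefficients shows
\begin{align*}
    Q_0(z)=(z^2-s_1z-p)(z^2-s_2z-q)-2s_1s_2z^2-2e_3z.
\end{align*}
Now substitute $s_1=u-p/u$ and $s_2=v-q/v$. Then reduce every power $z^k$ with $k\ge2$ by $z^2=(u+v)z+uv$, so both sides take the form $\alpha z+\beta$ with $\alpha,\beta$ rational in $u,v,p,q$.

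Because $z$ is a root of a quadratic that is not linear, it suffices to check that the coefficients $\alpha$ and $\beta$ agree. Equivalently, after clearing the denominator $uv$, one checks a polynomial identity in $u,v,p,q$. In doing so I expect the terms of degree zero in $p$ and $q$ to cancel. The terms of degree one should assemble into $2z^2(pv/u+qu/v)$, and the $pq$ terms should cancel.

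The main obstacle is this coefficient bookkeeping. It is routine, but it is long enough that sign errors are likely. To keep it controlled, I will expand by degree in $(p,q)$ and treat the three degree classes separately, using the symmetry $(a,b,u,p)\leftrightarrow(c,d,v,q)$ to halve the work. Alternatively, since the identity is polynomial after clearing denominators, it could be verified symbolically, consistent with the Lean formalization mentioned earlier.
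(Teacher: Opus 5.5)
Your plan is correct and is essentially the paper's argument: eliminate $a+b$ and $c+d$ via $u-ab/u$ and $v-cd/v$, reduce modulo $z^2=(u+v)z+uv$, and match the resulting linear forms in $z$ (the paper computes the constant term $R_0=2(abv^2+cdu^2-Euv)$ and then verifies $R_1=(u+v)R_0/(uv)$, which gives $Q_E(z)=R_0(uv+uz+vz)/(uv)$). The cancellations you predict by degree in $(ab,cd)$ do hold, and the bookkeeping shrinks to a few lines once you use $z^2-(a+b)z-ab=(z-u)(z+ab/u)$, $z^2-(c+d)z-cd=(z-v)(z+cd/v)$, and $(z-u)(z-v)=2uv$.
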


With the above lemmas in place, we are ready to prove the four-point rearrangement theorem.

\begin{proof}[Proof of \cref{thm:four-point-rearrangement}]
For brevity, write
\begin{align*}
    Z_0&\coloneqq Z_{ab\mid cd}
    =\mathcal K\bigl(\mathcal K(a,b),\mathcal K(c,d)\bigr),\\
    Z_1&\coloneqq Z_{ac\mid bd}
    =\mathcal K\bigl(\mathcal K(a,c),\mathcal K(b,d)\bigr),\\
    Z_2&\coloneqq Z_{ad\mid bc}
    =\mathcal K\bigl(\mathcal K(a,d),\mathcal K(b,c)\bigr).
\end{align*}
If $b=0$, then $a=0$, and $\mathcal K(0,x)=x$ gives $Z_0=Z_1=Z_2=\mathcal K(c,d)$. Both inequalities and their stated equality conditions therefore hold. Henceforth assume $b>0$, so all inner quantities appearing below are positive.

For the first comparison, set
\begin{align*}
    u=\mathcal K(a,b),\quad
    v=\mathcal K(c,d),\quad
    z_0=\mathcal K(u,v)=Z_0.
\end{align*}
The quadratic form in Lemma \ref{lem:quartic-evaluation} factors as
\begin{align}
    abv^2+cdu^2-(ad+bc)uv
    =(av-cu)(bv-du).
    \label{eq:first-quadratic-form}
\end{align}
By homogeneity of $\mathcal K$,
\begin{align*}
    av=\mathcal K(ac,ad), \quad
    cu=\mathcal K(ac,bc), \quad
    bv=\mathcal K(bc,bd), \quad
    du=\mathcal K(ad,bd).
\end{align*}
Thus, by strict coordinatewise monotonicity of $\Kop$, $av-cu$ has the sign of $ad-bc$, whereas $bv-du$ has the opposite sign. Consequently,
\begin{align*}
    abv^2+cdu^2-(ad+bc)uv\leq0,
\end{align*}
with equality if and only if $ad=bc$. Applying Lemma \ref{lem:quartic-evaluation} with $E=ad+bc$ yields $Q_{ad+bc}(z_0)\leq0$. Since $P_{ab\mid cd}(z_0)=0$, Lemma \ref{lem:pairing-differences} gives
\begin{align*}
    P_{ac\mid bd}(z_0)
    =2z_0^2(d-a)(c-b)Q_{ad+bc}(z_0)\leq0.
\end{align*}
Moreover,
\begin{align*}
    z_0>v=\mathcal K(c,d)
    \geq\max\{\mathcal K(a,c),\mathcal K(b,d)\}
    =M_{ac\mid bd}.
\end{align*}
The sign certificate in Lemma \ref{lem:sign-beyond-inner-roots} therefore implies $Z_0\leq Z_1$.

If $b=c$, the two pairings defining $Z_0$ and $Z_1$ coincide. If $ad=bc$, equality holds in \eqref{eq:first-quadratic-form}, hence $Q_{ad+bc}(z_0)=0$ and the same sign certificate gives $Z_0=Z_1$. Conversely, if $b<c$ and $ad\neq bc$, then \eqref{eq:first-quadratic-form} is strict and $(d-a)(c-b)>0$. Thus $P_{ac\mid bd}(z_0)<0$, which gives $Z_0<Z_1$. This proves \eqref{eq:first-equality-case}.

For the second comparison, set
\begin{align*}
    u'=\mathcal K(a,c),\quad
    v'=\mathcal K(b,d),\quad
    z_1=\mathcal K(u',v')=Z_1.
\end{align*}
The corresponding quadratic form factors as
\begin{align}
    ac(v')^2+bd(u')^2-(ab+cd)u'v'
    =(cv'-bu')(av'-du').
    \label{eq:second-quadratic-form}
\end{align}
Homogeneity and coordinatewise monotonicity give
\begin{align*}
    cv'&=\mathcal K(bc,cd)
    \geq\mathcal K(ab,bc)=bu',\\
    av'&=\mathcal K(ab,ad)
    \leq\mathcal K(ad,cd)=du'.
\end{align*}
Hence the right-hand side of \eqref{eq:second-quadratic-form} is nonpositive. By Lemma \ref{lem:quartic-evaluation} with $E=ab+cd$, we have $Q_{ab+cd}(z_1)\leq0$. Since $P_{ac\mid bd}(z_1)=0$, Lemma \ref{lem:pairing-differences} yields
\begin{align*}
    P_{ad\mid bc}(z_1)
    =2z_1^2(b-a)(d-c)Q_{ab+cd}(z_1)\leq0.
\end{align*}
Furthermore,
\begin{align*}
    z_1>v'=\mathcal K(b,d)
    \geq\max\{\mathcal K(a,d),\mathcal K(b,c)\}
    =M_{ad\mid bc}.
\end{align*}
Another application of Lemma \ref{lem:sign-beyond-inner-roots} gives $Z_1\leq Z_2$.

If $a=b$ or $c=d$, the pairings defining $Z_1$ and $Z_2$ agree up to symmetry. Conversely, if $a<b$ and $c<d$, strict monotonicity gives $cv'>bu'$ and $av'<du'$. Thus $Q_{ab+cd}(z_1)<0$. Since $(b-a)(d-c)>0$, we have $P_{ad\mid bc}(z_1)<0$. The sign certificate then gives $Z_1<Z_2$. This proves \eqref{eq:second-equality-case} and completes the proof.
\end{proof}
\section{Balanced optimality and the complete optimizer set}\label{sec:balanced}

In this section, we prove a strengthened version of Conjecture \ref{conj:ZJ-intro} using \cref{thm:four-point-rearrangement} from the preceding section. We first isolate the mechanism in an abstract balanced-composition principle, which yields balanced optimality, then specialize it to $\Kop$, determine every equality case, and finally translate the resulting optimizer set back to ConPP and s-join composition trees. 

\subsection{An abstract balanced-composition principle}

The argument establishing balanced optimality uses only the order and rearrangement properties of the composition law, rather than the explicit radical formula of $\Kop$. The following theorem records the precise abstract statement needed below.

\begin{theorem} \label{thm:abstract-balanced}
    Let $\star:[0,\infty)^2\to[0,\infty)$ be symmetric, positively homogeneous, and strictly increasing in each coordinate. Assume that, for some $\lambda>1$,
    \begin{align*}
        0\star x=x,\quad x\star x=\lambda x,
    \end{align*}
    hold for all $x\ge0$, and that the four-point rearrangement inequalities
    \begin{align*}
        (a\star b)\star(c\star d)
        \le(a\star c)\star(b\star d)
        \le(a\star d)\star(b\star c)
    \end{align*}
    hold whenever $0\le a\le b\le c\le d$. Define the balanced sequence $\{V_N\}_{N\ge0}$ by
    \begin{align}\label{eq:V-balanced}
        V_0=0,
        \quad
        V_1=1,
        \quad
        V_N=V_{\floor{N/2}}\star V_{\ceil{N/2}}
        \;(N\ge2),
    \end{align}
    and define the Bellman sequence $\{W_N\}_{N\ge1}$ by
    \begin{align}\label{eq:W-bellman}
        W_1=1,
        \quad
        W_N=\max_{1\le m<N}W_m\star W_{N-m}
        \;(N\ge2).
    \end{align}
    Then $\{V_N\}_{N\ge0}$ is strictly increasing and satisfies the supercomposition inequality
    \begin{align*}
        V_m\star V_n\le V_{m+n}
        \quad(m,n\ge0).
    \end{align*}
    Consequently, $W_N=V_N$ for every $N\ge1$.
\end{theorem}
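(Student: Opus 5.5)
We prove the three assertions in order: monotonicity, supercomposition by strong induction on $m+n$ using only the outer inequality of the four-point rearrangement, and finally the identification $W_N=V_N$.

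\emph{Monotonicity.} First note that the recursion \eqref{eq:V-balanced} also holds at $N=1$, since $V_0\star V_1=0\star 1=1=V_1$. For $N\ge1$, compare the index pairs $(\floor{N/2},\ceil{N/2})$ and $(\floor{(N+1)/2},\ceil{(N+1)/2})$. Each coordinate is nondecreasing in $N$, and exactly one coordinate increases by one. By strong induction all indices involved are smaller than $N+1$, so $V$ is already strictly increasing on them. Symmetry and strict coordinatewise monotonicity of $\star$ then give $V_N<V_{N+1}$.

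\emph{Supercomposition.} We prove $V_m\star V_n\le V_{m+n}$ by strong induction on $s=m+n$, assuming $m\le n$ by symmetry.

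\begin{itemize}
\item If $m=0$, the claim is $0\star V_n=V_n$.
\item If $m=n$, then $V_m\star V_m=V_{2m}$ is exactly the balanced recursion.
\item Otherwise $1\le m<n$. Split $m=m_1+m_2$ and $n=n_1+n_2$ with $m_1=\floor{m/2}$, $m_2=\ceil{m/2}$, $n_1=\floor{n/2}$, $n_2=\ceil{n/2}$.
\end{itemize}

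In the last case we have $m_1\le m_2\le n_1\le n_2$. The only nontrivial inequality is $\ceil{m/2}\le\floor{n/2}$, which can fail only when $m=n$ is odd, a case already excluded. By monotonicity, the values $a=V_{m_1}\le b=V_{m_2}\le c=V_{n_1}\le d=V_{n_2}$ are ordered, and $V_m\star V_n=(a\star b)\star(c\star d)$. The two rearrangement inequalities combine to
\begin{align*}
    V_m\star V_n\le (V_{m_1}\star V_{n_2})\star(V_{m_2}\star V_{n_1}).
\end{align*}
Both inner sums $m_1+n_2$ and $m_2+n_1$ are strictly less than $s$, since the other sum is at least $1$. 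The induction hypothesis therefore gives $V_{m_1}\star V_{n_2}\le V_{m_1+n_2}$ and $V_{m_2}\star V_{n_1}\le V_{m_2+n_1}$. Monotonicity of $\star$ in each coordinate then yields
\begin{align*}
    V_m\star V_n\le V_{m_1+n_2}\star V_{m_2+n_1}.
\end{align*}

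The key observation is that this extreme pairing is balanced:
\begin{align*}
    (m_1+n_2)-(m_2+n_1)=(n_2-n_1)-(m_2-m_1)\in\{-1,0,1\}.
\end{align*}
Hence $\{m_1+n_2,\,m_2+n_1\}=\{\floor{s/2},\ceil{s/2}\}$, and by symmetry of $\star$ the right-hand side equals $V_s$. This closes the induction.

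\emph{Identification with the Bellman sequence.} By strong induction on $N$, with $W_1=V_1$ as base case:
\begin{itemize}
\item The balanced split $m=\floor{N/2}$ gives $W_N\ge V_{\floor{N/2}}\star V_{\ceil{N/2}}=V_N$.
\item Conversely, for every $1\le m<N$, $W_m\star W_{N-m}=V_m\star V_{N-m}\le V_N$ by supercomposition, so $W_N\le V_N$.
\end{itemize}

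\emph{Main obstacle.} The step needing the most care is the index bookkeeping. We must ensure that the sorted order $a\le b\le c\le d$ holds, which is why the odd diagonal case $m=n$ is split off, and that the recursive calls use strictly smaller total size. The boundary cases $m=1$ (where $a=V_0=0$) and $V_0=0$ must fit the hypotheses of the rearrangement inequality. Since that inequality is assumed for all $0\le a\le b\le c\le d$, they are covered once these index checks are in place.
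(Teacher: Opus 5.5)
Your proof is correct and uses the paper's mechanism: expand both children by the balanced recurrence, rearrange to the extreme pairing, apply the induction hypothesis, and observe that the resulting split is balanced. The only difference is packaging. The paper separates four parity cases, using homogeneity for even--even and only the first inequality for mixed parity, where a repeated entry makes the middle and extreme pairings coincide. Your single halving step with the outer inequality covers all cases at once and never uses homogeneity or $x\star x=\lambda x$.
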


\begin{proof}
    Since $V_0<V_1<V_2$, and since $V_r<V_{r+1}$ implies
    \begin{align*}
        V_{2r}=V_r\star V_r
        <V_r\star V_{r+1}=V_{2r+1}
        <V_{r+1}\star V_{r+1}=V_{2r+2},
    \end{align*}
    strict monotonicity follows by induction. We prove the supercomposition inequality by strong induction on $m+n$, assuming $0<m\le n$; the case $m=0$ is immediate. For even--even indices, homogeneity and the induction hypothesis give
    \begin{align*}
        V_{2a}\star V_{2b}=\lambda(V_a\star V_b)\le\lambda V_{a+b}=V_{2a+2b}.
    \end{align*}

    The even--odd case follows from the first rearrangement inequality and the induction hypothesis:
    \begin{align*}
        V_{2a}\star V_{2b+1}
        =(V_a\star V_a)\star(V_b\star V_{b+1})
        \le(V_a\star V_b)\star(V_a\star V_{b+1})
        \le V_{a+b}\star V_{a+b+1}
        =V_{2a+2b+1}.
    \end{align*}

    For odd--even indices, $a+1\le b$, and the same inequality with induction hypothesis gives
    \begin{align*}
        V_{2a+1}\star V_{2b}
        =(V_a\star V_{a+1})\star(V_b\star V_b)
        \le(V_a\star V_b)\star(V_{a+1}\star V_b)
        \le V_{a+b}\star V_{a+b+1}
        =V_{2a+2b+1}.
    \end{align*}

    For two odd indices, $a=b$ gives equality. If $a<b$, the full rearrangement chain and the induction hypothesis give
    \begin{align*}
        V_{2a+1}\star V_{2b+1}
        &=(V_a\star V_{a+1})\star(V_b\star V_{b+1})
        \le(V_a\star V_{b+1})\star(V_{a+1}\star V_b)\\
        &\le V_{a+b+1}\star V_{a+b+1}
        =V_{2a+2b+2}.
    \end{align*}
    This proves supercomposition. Finally, if $W_j=V_j$ for $j<N$, every root split has value at most $V_N$, whereas the balanced split attains $V_N$ by~\eqref{eq:V-balanced}. Induction gives $W_N=V_N$.
\end{proof}

We apply the principle to the common scalar program to prove the balanced optimality of $U_N$.

\begin{corollary}\label{cor:balanced-optimality}
    Extend the common scalar sequence by $U_0 \coloneqq 0$. Then $\{U_N\}_{N\ge0}$ is strictly increasing, and for all $m,n\ge0$,
    \begin{align*} 
        \Kop(U_m,U_n)\le U_{m+n}.
    \end{align*}
    Moreover, every $N\ge2$ satisfies the balanced recurrence
    \begin{align} \label{eq:balanced-recurrence}
        U_N=\Kop\bigl(U_{\floor{N/2}},U_{\ceil{N/2}}\bigr).
    \end{align}
    In particular, the balanced root split solves the Bellman program~\eqref{eq:U-DP} at every leaf count.
\end{corollary}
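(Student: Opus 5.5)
The plan is to obtain the corollary by instantiating \cref{thm:abstract-balanced} with $\star=\Kop$ and $\lambda=\rho=1+\sqrt2>1$. The work is to check each hypothesis of that theorem against results already established.

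\cref{lem:K-properties} gives the structural hypotheses. There $\Kop$ is continuous, symmetric, positively homogeneous and strictly increasing in each coordinate on $[0,\infty)^2$. By \eqref{eq:K-special} it also satisfies $\Kop(0,x)=x$ and $\Kop(x,x)=\rho x$. The four-point rearrangement inequalities required by the abstract principle are exactly the chain \eqref{eq:four-point-chain} of \cref{thm:four-point-rearrangement}, valid for all $0\le a\le b\le c\le d$. We do not need the equality cases here.

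Next, we identify the sequences. The Bellman sequence $W_N$ in \eqref{eq:W-bellman}, with $\star=\Kop$, has the same initial value $W_1=1$ and the same recursion as $U_N$ in \eqref{eq:U-DP} from \cref{prop:scalar-equivalence}. By induction, $W_N=U_N$ for all $N\ge1$. \cref{thm:abstract-balanced} then gives $W_N=V_N$. So $U_N=V_N$ for $N\ge1$, and the convention $U_0=0=V_0$ extends this to $N=0$.

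All conclusions now transfer directly. Strict monotonicity of $\{V_N\}_{N\ge0}$ gives strict monotonicity of $\{U_N\}_{N\ge0}$. The supercomposition inequality $V_m\star V_n\le V_{m+n}$ becomes $\Kop(U_m,U_n)\le U_{m+n}$ for all $m,n\ge0$. The defining recursion \eqref{eq:V-balanced} of $V_N$ becomes \eqref{eq:balanced-recurrence}.

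Finally, for $N\ge2$ the balanced split $m=\floor{N/2}$ satisfies $1\le m<N$. Its value $\Kop(U_{\floor{N/2}},U_{\ceil{N/2}})$ equals the maximum $U_N$, so the balanced split attains the Bellman maximum. No step is a real obstacle, since the substantive content lies in \cref{thm:four-point-rearrangement} and \cref{thm:abstract-balanced}. The only bookkeeping point is the index $U_0=0$: the abstract theorem uses $V_0=0$, and we match it with the convention $U_0\coloneqq0$ rather than with the Bellman program, which starts at $N=1$.
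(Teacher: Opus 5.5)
Your proposal is correct and follows the paper's proof. Both instantiate \cref{thm:abstract-balanced} with $\star=\Kop$ and $\lambda=\rho$, identify \eqref{eq:W-bellman} with the Bellman program \eqref{eq:U-DP}, and transfer $U_N=V_N$ (including $U_0=V_0=0$) to get monotonicity, supercomposition, and the balanced recurrence; your explicit check of the hypotheses via \cref{lem:K-properties} and \cref{thm:four-point-rearrangement} just spells out what the paper leaves implicit.
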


\begin{proof}
    Note that \eqref{eq:W-bellman} is exactly the Bellman sequence defined by~\eqref{eq:U-DP}. Apply \cref{thm:abstract-balanced} with $\star=\Kop$ and $\lambda=\rho$ to obtain that $U_N = V_N$ for all $N\geq 0$, which finishes the proof. 
\end{proof}


\subsection{Equality in supercomposition and all maximizing root splits}

The sharp rearrangement equality conditions determine whether an unbalanced split can tie. The next lemma identifies exactly when the inequality in Corollary \ref{cor:balanced-optimality} is tight. The exceptional pairs are those that become consecutive after removal of a common power of two; this is the source of every nonbalanced tie in the Bellman recursion.

\begin{lemma}\label{lem:equality-supercomposition}
    For positive integers $m,n$,
    \begin{align*}
        \Kop(U_m,U_n)=U_{m+n}
    \end{align*}
    if and only if either
    \begin{enumerate}[label=(\roman*)]
        \item $m=n$; or
        \item there exist integers $k\ge0$ and $r\ge1$ such that
        \begin{align*}
            \{m,n\}=\{2^kr,2^k(r+1)\}.
        \end{align*}
    \end{enumerate}
\end{lemma}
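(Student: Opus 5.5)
We argue both directions by strong induction on $m+n$, following the parity case analysis in the proof of \cref{thm:abstract-balanced}. The equality cases in \eqref{eq:first-equality-case} and \eqref{eq:second-equality-case} of \cref{thm:four-point-rearrangement} decide exactly when each step of that chain is tight.

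\emph{Sufficiency.} If $m=n$, then $U_{2m}=\Kop(U_m,U_m)$ by \eqref{eq:balanced-recurrence}. For $\{m,n\}=\{2^kr,2^k(r+1)\}$ we induct on $k$. When $k=0$, the pair $\{r,r+1\}$ is the balanced split of $2r+1$, so \eqref{eq:balanced-recurrence} gives equality. When $k\ge1$, the even--even identity $\Kop(U_{2a},U_{2b})=\rho\,\Kop(U_a,U_b)$ combined with $U_{2(a+b)}=\rho U_{a+b}$ shows that equality at $(2a,2b)$ is equivalent to equality at $(a,b)$. This reduces the claim to $k-1$.

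\emph{Necessity.} Assume $m<n$ and equality holds. We run through the four parity cases and require every inequality in the chain to be an equality.

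\begin{itemize}
\item \textbf{Even--even} $(2a,2b)$. Equality reduces to $(a,b)$, and induction closes this case, since the admissible pairs are closed under doubling.

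\item \textbf{Even--odd} $(2a,2b+1)$ with $a\le b$. The chain applies rearrangement to $V_a\le V_a\le V_b\le V_{b+1}$. By \eqref{eq:first-equality-case}, the first step is tight iff $V_a=V_b$, or $V_aV_{b+1}=V_aV_b$ (impossible by strict monotonicity). Hence $a=b$, which gives $\{2a,2a+1\}$, the case $k=0$. If instead $a>b$, the pair is odd--even and handled the same way.

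\item \textbf{Odd--even} $(2a+1,2b)$ with $a+1\le b$. We order $V_a\le V_{a+1}\le V_b\le V_b$. Tightness requires either $V_{a+1}=V_b$, or $V_aV_b=V_{a+1}V_b$ (impossible). So $b=a+1$, and the pair is $\{2a+1,2a+2\}$.

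\item \textbf{Odd--odd} $(2a+1,2b+1)$ with $a<b$. Both steps of the full chain must be tight. The second step, by \eqref{eq:second-equality-case}, needs $V_a=V_{a+1}$ or $V_b=V_{b+1}$, and both are impossible. Hence there is no equality in this case.
\end{itemize}

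In the even--odd and odd--even cases we must also require the inductive supercomposition steps, such as $\Kop(V_a,V_b)\le V_{a+b}$, to be tight. Once the rearrangement step has forced $b=a$ or $b=a+1$, these are automatically tight by \eqref{eq:balanced-recurrence}. So only the rearrangement equality carries information.

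\emph{Main obstacle.} The delicate point is the odd--odd case with $a<b$ where the first rearrangement step is not used. There we need to confirm that strictness of the second step alone suffices. Equally, in the mixed cases the first-equality condition "$ad=bc$" must be ruled out: with the $V$'s assigned as above, it reduces to $V_a=V_b$ or $V_{a+1}=V_b$, which strict monotonicity handles. We also check the boundary case $a=0$, where $V_0=0$ makes $ad=bc$ hold trivially. For example, $(1,2b)$ with $a=0$ is $\{1,2b\}$, and equality there must still force $b=1$. This needs the chain's second inductive step $\Kop(V_b,V_b)$ versus $\Kop(V_0\star V_b,\cdot)$ to be examined separately. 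We expect to settle this case by a direct comparison: show $\Kop(1,U_{2b})<U_{2b+1}$ for $b\ge2$, using $U_{2b+1}=\Kop(U_b,U_{b+1})$ and the ratio sandwich \eqref{eq:ratio-sandwich}.
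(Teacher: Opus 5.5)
Your parity case analysis in the bullet list is correct and is essentially the paper's own proof. It uses the same strong induction on $m+n$, the same ordered quadruples, the first equality case \eqref{eq:first-equality-case} for mixed parities and the second \eqref{eq:second-equality-case} for unequal odd indices, and sufficiency via the balanced recurrence plus the doubling reduction.

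The ``main obstacle'' paragraph, however, is a misdiagnosis and should be dropped.

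\begin{itemize}
\item In the even--odd case, $a=0$ cannot occur because $m=2a\ge1$. This is why the paper records $a\ge1$: then $U_a>0$ rules out $U_aU_{b+1}=U_aU_b$.
\item For $(1,2b)$ the quadruple is $0\le U_1\le U_b\le U_b$. So $ad=0\ne U_b=bc$, and the first equality case forces $U_1=U_b$, i.e.\ $b=1$. Your own odd--even bullet already says exactly this, so no separate treatment is needed.
\item In the odd--odd case, a single strict link makes the whole chain strict, so there is nothing further to confirm.
\item The proposed detour through \eqref{eq:ratio-sandwich} would not work anyway. Comparing $(1,\rho U_b)$ with $(U_b,U_{b+1})$, one ratio is $\rho U_b/U_{b+1}=\rho/R_b\ge1$, so the sandwich gives no bound below $1$.
\end{itemize}

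A minor slip in the same paragraph: in the mixed cases the alternative $ad=bc$ reduces to $U_{b+1}=U_b$ (even--odd) or $U_a=U_{a+1}$ (odd--even). It does not reduce to $U_a=U_b$ or $U_{a+1}=U_b$; those are the $b=c$ alternatives.
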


\begin{proof}
    We use strong induction on $m+n$ and assume $m\le n$ by symmetry. The base case $m=n=1$ follows from~\eqref{eq:balanced-recurrence}. If $m=2a$ and $n=2b$, then
    \begin{align*}
        \Kop(U_m,U_n)=\rho\Kop(U_a,U_b),
        \quad
        U_{m+n}=\rho U_{a+b}.
    \end{align*}
    Equality for $(m,n)$ is therefore equivalent to equality for $(a,b)$. By the induction hypothesis, either $a=b$, which gives $m=n$, or $\{a,b\}=\{2^kr,2^k(r+1)\}$. Conversely, either lower-level equality propagates to $(m,n)$ by the same identities. 

    Next let $m=2a$ and $n=2b+1$. Here $a\ge1$ and $a\le b$. The supercomposition proof begins with
    \begin{align*}
        \Kop\bigl(\Kop(U_a,U_a),\Kop(U_b,U_{b+1})\bigr)\le\Kop\bigl(\Kop(U_a,U_b),\Kop(U_a,U_{b+1})\bigr).
    \end{align*}
    For the ordered quadruple $U_a\le U_a\le U_b\le U_{b+1}$, the first equality condition in \cref{thm:four-point-rearrangement} requires either $U_a=U_b$ or $U_aU_{b+1}=U_aU_b$. The second alternative is impossible because $U_a>0$ and $U_{b+1}>U_b$, while strict monotonicity of $\{U_j\}$ makes the first equivalent to $a=b$. Hence the supercomposition inequality is strict unless $(m,n)=(2a,2a+1)$. In that remaining case the two indices are consecutive, and equality follows directly from~\eqref{eq:balanced-recurrence}.
    
    Similarly, for $(m,n)=(2a+1,2b)$, it forces $b=a+1$, again giving consecutive indices. These equalities also follow directly from the balanced recurrence.

    Finally, for $(m,n)=(2a+1,2b+1)$, equality is immediate when $a=b$. If $a<b$, then
    \begin{align*}
        \Kop\bigl(\Kop(U_a,U_{a+1}),\Kop(U_b,U_{b+1})\bigr)
        & \le \Kop\bigl(\Kop(U_a,U_b),\Kop(U_{a+1},U_{b+1})\bigr)
        <\Kop\bigl(\Kop(U_a,U_{b+1}),\Kop(U_{a+1},U_b)\bigr)\\
        & \le \Kop(U_{a+b+1},U_{a+b+1})
        =U_{2a+2b+2}.
    \end{align*}
    The middle inequality is strict by \cref{thm:four-point-rearrangement}. Thus unequal odd indices never tie, completing the induction in both directions.
\end{proof}

The optimizer set now follows from the $2$-adic factorization of the total leaf count.

\begin{corollary}\label{cor:all-splits}
    For $N\ge2$, define the set of maximizing left-child leaf counts by
    \begin{align*}
        \mathcal{M}_N
        \coloneqq \argmax_{1\le m<N}\Kop(U_m,U_{N-m}).
    \end{align*}
    Write $N=\nu \cdot 2^\ell$, where $\nu$ is odd. Then 
    \begin{enumerate}[label=(\roman*)]
        \item If $\nu=1$, then
        \begin{align*}
            \mathcal{M}_N=\{N/2\}.
        \end{align*}
        \item If $\nu>1$ and $\ell=0$, then
        \begin{align*}
            \mathcal{M}_N=\{(N-1)/2,(N+1)/2\}.
        \end{align*}
        \item If $\nu>1$ and $\ell\ge1$, then
        \begin{align}\label{eq:three-splits}
            \mathcal{M}_N
            =\left\{(\nu-1)2^{\ell-1},\nu\cdot2^{\ell-1},(\nu+1)2^{\ell-1}\right\}.
        \end{align}
    \end{enumerate}
    In particular, there are no maximizing root splits beyond those listed above.
\end{corollary}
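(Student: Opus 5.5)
The plan is to reduce everything to \cref{lem:equality-supercomposition}. By Corollary~\ref{cor:balanced-optimality}, the maximum value of $\Kop(U_m,U_{N-m})$ over $1\le m<N$ is exactly $U_N$. So $m\in\mathcal M_N$ if and only if $\Kop(U_m,U_{N-m})=U_N$. By \cref{lem:equality-supercomposition} with $n=N-m$, this holds precisely when either $m=N-m$, or
\begin{align*}
    \{m,N-m\}=\{2^kr,2^k(r+1)\}\quad\text{for some } k\ge0,\ r\ge1.
\end{align*}
The corollary therefore becomes a purely arithmetic enumeration of the pairs with $m+n=N$ that are either equal or of the form $2^k r,\,2^k(r+1)$.

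For the enumeration, the key observation is the sum in the second alternative: $2^kr+2^k(r+1)=2^k(2r+1)$, and $2r+1$ is odd and at least $3$. Writing $N=\nu2^\ell$ with $\nu$ odd, uniqueness of the $2$-adic factorization forces $k=\ell$ and $2r+1=\nu$. Such a pair exists only when $\nu>1$, and then $r=(\nu-1)/2$ is determined. The two corresponding left-child counts are
\begin{align*}
    m=(\nu-1)2^{\ell-1}\quad\text{and}\quad m=(\nu+1)2^{\ell-1},
\end{align*}
since $2^\ell(\nu\mp1)/2=(\nu\mp1)2^{\ell-1}$. These are integers in every case: when $\ell=0$, $\nu\pm1$ is even, and when $\ell\ge1$, $2^{\ell-1}$ is an integer. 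The first alternative $m=n$ adds $m=N/2$, which is an integer exactly when $\ell\ge1$, and it equals $\nu2^{\ell-1}$.

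The three cases then read off as follows.
\begin{enumerate}[label=(\roman*)]
    \item If $\nu=1$, there is no consecutive-type pair, so only $m=N/2$ survives. Here $\ell\ge1$ because $N\ge2$.
    \item If $\nu>1$ and $\ell=0$, then $N$ is odd, so $m=n$ is impossible. The consecutive pair gives $m\in\{(N-1)/2,(N+1)/2\}$.
    \item If $\nu>1$ and $\ell\ge1$, all three values in \eqref{eq:three-splits} appear, and they are distinct.
\end{enumerate}
The final sentence of the corollary is just the "only if" direction of \cref{lem:equality-supercomposition}, combined with $\max_m\Kop(U_m,U_{N-m})=U_N$.

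I do not expect a genuine obstacle, since the analytic content already sits in \cref{lem:equality-supercomposition}. The one point needing care is the uniqueness step: a single $N$ cannot arise from two different $k$. This follows because $2r+1$ is odd, so $k$ must equal the $2$-adic valuation of $N$. Both orientations of the unordered pair must also be counted, since $\mathcal M_N$ records left-child counts. For completeness, I would also note that $1\le m<N$ holds for each listed value, which follows from $\nu\ge3$ in cases (ii) and (iii).
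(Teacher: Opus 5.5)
Your proposal is correct and follows the paper's proof essentially step for step. You reduce membership in $\mathcal M_N$ to the equality $\Kop(U_m,U_{N-m})=U_N$ and invoke Lemma~\ref{lem:equality-supercomposition}; you then use uniqueness of the odd-part/power-of-two factorization of $N=2^k(2r+1)$ to force $k=\ell$ and $2r+1=\nu$, and count both orientations of the pair. Your extra checks on integrality, distinctness, and $1\le m<N$ spell out details the paper leaves implicit.
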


\begin{proof}
    By the Bellman recursion and Corollary \ref{cor:balanced-optimality}, a split is maximizing exactly when
    \begin{align*}
        \Kop(U_m,U_{N-m})=U_N=U_{m+(N-m)}.
    \end{align*}
    The equality characterization in Lemma \ref{lem:equality-supercomposition} therefore applies. The equal-child alternative gives the unique balanced split $m=N/2$ whenever $N$ is even. Under the scaled-consecutive alternative,
    \begin{align*}
        N=2^kr+2^k(r+1)=2^k(2r+1).
    \end{align*}
    Since $2r+1$ is odd, uniqueness of the factorization of $N$ into an odd part and a power of two forces $k=\ell$ and $2r+1=\nu$. Thus $r=(\nu-1)/2$, and the two child counts are
    \begin{align*}
        2^\ell r=2^{\ell-1}(\nu-1),
        \quad
        2^\ell(r+1)=2^{\ell-1}(\nu+1).
    \end{align*}
    If $\nu=1$, this alternative is excluded by $r\ge1$, leaving only the equal split. If $\nu>1$ and $\ell=0$, $N$ is odd and only the two orientations of the consecutive split remain. If $\nu>1$ and $\ell\ge1$, the equal split contributes $2^{\ell-1}\nu$ in addition to the two orientations of the scaled-consecutive split. This proves all three cases and their completeness.
\end{proof}

We finish by translating leaf counts into the step-count notation used by \citet{ZhangJiang}. This makes explicit both the balanced assertion and all the additional ties in their conjecture.

\begin{theorem}[Conjecture~4.17 in \citet{ZhangJiang}]\label{cor:ZhangJiang-conjecture}
    For every step count $n\ge1$, 
    \begin{align*}
        \one^\top h_\circ^{(n)}
        =\one^\top\operatorname{ConPP}\!\left(
        h_\circ^{(\floor{(n-1)/2})},
        h_\circ^{(\ceil{(n-1)/2})}
        \right).
    \end{align*}
    Moreover, if $n=\nu\cdot2^\ell-1$ with $\nu>1$ odd and $\ell\ge1$, then, for every $i\in\{-1,0,1\}$,
    \begin{align*}
        \one^\top h_\circ^{(n)}
        =\one^\top\operatorname{ConPP}\!\left(
        h_\circ^{((\nu+i)2^{\ell-1}-1)},
        h_\circ^{((\nu-i)2^{\ell-1}-1)}
        \right),
    \end{align*}
    and no other ConPP root split between two recursively optimized primitive schedules attains the same total stepsize.
\end{theorem}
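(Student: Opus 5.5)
The plan is to reduce the statement to the leaf-count results already proved, namely Corollary~\ref{cor:balanced-optimality} and Corollary~\ref{cor:all-splits}, by working in the variables $N=n+1$ and $U_N=H_N+1$. The key observation is that the ConPP node value depends on its children only through their totals. If $a=h_\circ^{(k)}$ and $b=h_\circ^{(n-k-1)}$, then \eqref{eq:reciprocal-identities} gives
\begin{align*}
    \one^\top\operatorname{ConPP}(a,b)+1
    =H_{k+1}+H_{n-k}+\varphi(H_{k+1},H_{n-k})+1
    =\Kop(U_{k+1},U_{n-k}).
\end{align*}
This holds regardless of which maximizer was selected in Definition~\ref{def:optimized-primitive}, because every selection has the same total $H$. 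So a root split with left child of $k$ steps corresponds to the leaf-count split $m=k+1$ of $N=n+1$, and the set of maximizing $k$ is exactly $\mathcal M_N-1$. Since $\one^\top h_\circ^{(n)}+1=U_N$ by Proposition~\ref{prop:scalar-equivalence}, a ConPP split attains the optimal total if and only if $k+1\in\mathcal M_N$.

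The first assertion comes next. With $k=\floor{(n-1)/2}$ we have $n-k-1=\ceil{(n-1)/2}$, so the leaf counts are $k+1=\floor{(n+1)/2}=\floor{N/2}$ and $n-k=\ceil{N/2}$. Then \eqref{eq:balanced-recurrence} gives $\Kop(U_{\floor{N/2}},U_{\ceil{N/2}})=U_N$, which proves the claim. I will check the small parity bookkeeping for both even and odd $n$.

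For the second assertion, $n=\nu2^\ell-1$ means $N=\nu2^\ell$ with $\nu>1$ odd and $\ell\ge1$, so case (iii) of Corollary~\ref{cor:all-splits} applies. Take the left child to be $h_\circ^{((\nu+i)2^{\ell-1}-1)}$ with $i\in\{-1,0,1\}$. Its leaf count is $m=(\nu+i)2^{\ell-1}$, which is exactly one of the three elements of \eqref{eq:three-splits}. The right child has $(\nu-i)2^{\ell-1}-1=n-((\nu+i)2^{\ell-1}-1)-1$ steps, so the pair is a genuine root split. Hence all three attain $U_N$. Conversely, any ConPP root split $k$ attaining the optimal total must satisfy $k+1\in\mathcal M_N$, which is the listed set, so no other split ties.

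There is no real obstacle, since the substantive work was done in Lemma~\ref{lem:equality-supercomposition}. The only point that needs care is to state explicitly that the non-uniqueness in selecting $h_\circ^{(j)}$ does not affect totals. This justifies treating the exclusivity claim as a statement about leaf-count splits.
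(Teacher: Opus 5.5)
Your proposal is correct and follows essentially the same route as the paper: pass to leaf counts via $N=n+1$ and $U_N=H_N+1$, then read off the balanced split from Corollary~\ref{cor:balanced-optimality} and the three ties plus exclusivity from case (iii) of Corollary~\ref{cor:all-splits}. Your explicit remark that the choice among maximizers in Definition~\ref{def:optimized-primitive} does not affect totals, so that every ConPP root split has total exactly $\Kop(U_{k+1},U_{n-k})-1$, is a useful clarification that the paper leaves implicit.
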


\begin{proof}
    Note that $N=n+1$. The balanced leaf counts $\floor{N/2}$ and $\ceil{N/2}$ therefore become the child step counts $\floor{(n-1)/2}$ and $\ceil{(n-1)/2}$, which proves the first assertion by Proposition \ref{prop:scalar-equivalence} and Corollary \ref{cor:balanced-optimality}. If $N=\nu\cdot2^\ell$ with $\nu>1$ odd and $\ell\ge1$, by Corollary \ref{cor:all-splits}, the maximizing leaf counts are $(\nu+i)2^{\ell-1}$ for $i\in\{-1,0,1\}$. Completeness follows again from Corollary \ref{cor:all-splits}.
\end{proof}

By the treewise conjugacy established in Proposition \ref{prop:scalar-equivalence}, the same split rules characterize all optimal basic s-join trees: replace ConPP by the s-join and interpret maximization of $U_N$ as minimization of $s_N=U_N^{-1}$. These statements concern the recursively generated symmetric families and do not identify their ambient certificate classes. It remains only to pass from the root optimizer set to complete trees.

\begin{corollary} \label{cor:optimal-trees}
    Let $T$ be an ordered full binary composition tree with $N$ leaves, assign value $1$ to every leaf, and evaluate each internal node using $\Kop$. Then $T$ attains the optimal root value $U_N$ if and only if, at every internal node with $j$ descendant leaves, its two child subtrees are optimal for their respective leaf counts and its left-child leaf count belongs to $\mathcal{M}_j$. Equivalently, $T$ is optimal if and only if every internal split satisfies one of the two equality patterns in Lemma \ref{lem:equality-supercomposition}.
\end{corollary}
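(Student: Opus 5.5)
We argue by strong induction on the number of leaves $N$, writing $\operatorname{val}(T)$ for the root value of a tree $T$. The first step is a monotonicity comparison that holds for every tree. If $T$ has root split $(m,N-m)$ with subtrees $T_1,T_2$, then by induction (or trivially for one leaf) $\operatorname{val}(T_1)\le U_m$ and $\operatorname{val}(T_2)\le U_{N-m}$. Since $\Kop$ is strictly increasing in each coordinate (\cref{lem:K-properties}), this gives
\begin{align*}
    \operatorname{val}(T)=\Kop\bigl(\operatorname{val}(T_1),\operatorname{val}(T_2)\bigr)\le\Kop(U_m,U_{N-m})\le U_N.
\end{align*}
The last inequality is the supercomposition inequality of Corollary~\ref{cor:balanced-optimality}, or equivalently the Bellman definition~\eqref{eq:U-DP}.

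For the forward direction, suppose $\operatorname{val}(T)=U_N$, so both inequalities in the chain are equalities. Because the subtree values are positive, equality in the first inequality forces $\operatorname{val}(T_1)=U_m$ and $\operatorname{val}(T_2)=U_{N-m}$; otherwise strict monotonicity would make it strict. Hence both child subtrees are optimal for their leaf counts. Equality in the second inequality says exactly that $m\in\mathcal M_N$. Applying the induction hypothesis to $T_1$ and $T_2$ extends the split condition to every internal node.

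For the converse, assume every internal node has optimal children and a left-child count in $\mathcal M_j$. By induction both subtrees attain $U_m$ and $U_{N-m}$, so $\operatorname{val}(T)=\Kop(U_m,U_{N-m})$. Since $m\in\mathcal M_N$, the Bellman recursion~\eqref{eq:U-DP} identifies this with $U_N$. The recursive condition in the statement is therefore equivalent to the purely local condition that $m\in\mathcal M_j$ at every node.

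For the second formulation, the proof of Corollary~\ref{cor:all-splits} already shows that $m\in\mathcal M_j$ holds exactly when $\Kop(U_m,U_{j-m})=U_j$. By Lemma~\ref{lem:equality-supercomposition}, this happens exactly when the split is either equal or of the scaled-consecutive form $\{2^kr,2^k(r+1)\}$.

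The only delicate point is the strictness needed in the forward direction. It requires positive subtree values, which holds because every $U_j\ge1$. It also requires that the induction hypothesis be stated for every leaf count below $N$, so that the subtree bounds apply whatever the split $m$ is. With these in place, the argument is a direct principle-of-optimality induction, and I do not expect any substantive obstacle.
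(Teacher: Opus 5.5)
Your proposal is correct and follows essentially the same argument as the paper. Both bound the subtree values by $U_m$ and $U_{N-m}$, use strict monotonicity of $\Kop$ to show that attaining $U_N$ forces optimal children and $m\in\mathcal M_N$, and then induct downward for necessity and upward for sufficiency, with the second formulation read off from Lemma~\ref{lem:equality-supercomposition}.
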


\begin{proof}
    If the root subtrees have $m$ and $N-m$ leaves and values $X$ and $Y$, then $X\le U_m$ and $Y\le U_{N-m}$. Strict monotonicity of $\Kop$ shows that the root can attain $U_N$ only if $X=U_m$, $Y=U_{N-m}$, and $m\in\mathcal M_N$; these conditions are also sufficient by Corollary \ref{cor:all-splits}. Recursion gives necessity at every node, while upward induction gives sufficiency.
\end{proof}

The resulting multiplicity of optimal plane composition certificates is quantified in \cref{app:tree-multiplicity}.

\subsection{Algorithmic consequences}

\citet{ZhangJiang} observed that exhaustive primitive dynamic programming is quadratic and that the balanced recurrence, if their conjecture held, would reduce scalar tabulation to linear time. Corollary \ref{cor:balanced-optimality} validates this conclusion. More strongly, binary self-similarity permits one value to be evaluated without forming the preceding table. For every $r\ge0$,
\begin{align*}
    (U_{2r},U_{2r+1})
    &=\bigl(\rho U_r,\Kop(U_r,U_{r+1})\bigr),\\
    (U_{2r+1},U_{2r+2})
    &=\bigl(\Kop(U_r,U_{r+1}),\rho U_{r+1}\bigr).
\end{align*}
These identities follow from \eqref{eq:balanced-recurrence} and \eqref{eq:K-special}, including $r=0$ under the convention $U_0=0$. The adjacent-pair identities permit $U_N$ to be evaluated directly from the binary expansion of $N$, without first computing the prefix table $U_1,\ldots,U_{N-1}$. The resulting procedure is summarized in \cref{alg:bitwise}.

\begin{algorithm}[H]
    \caption{Direct Evaluation of $U_N$ from the Binary Expansion of $N$}
    \label{alg:bitwise}

    \SetAlgoNoLine
    \SetNlSkip{-1em}

    \KwIn{An integer $N\ge 1$.}
    \KwOut{The scalar $U_N$.}

    \Indp

    $k\leftarrow\floor{\log_2 N}+1$ and compute the binary expansion
    $N=(\xi_{k-1}\cdots\xi_0)_2$, where $\xi_{k-1}=1$.\;

    $(a,b)\leftarrow(0,1)$.\;

    \For{$j=k-1,k-2,\ldots,0$}{
        $c\leftarrow\Kop(a,b)$.\;

        \eIf{$\xi_j=0$}{
            $(a,b)\leftarrow(\rho a,c)$.\;
        }{
            $(a,b)\leftarrow(c,\rho b)$.\;
        }
    }

    \Return{$a$.}
\end{algorithm}

Indeed, after a binary prefix of value $r$ has been processed, the invariant is $(a,b)=(U_r,U_{r+1})$. The two identities above map this pair to $(U_{2r},U_{2r+1})$ or $(U_{2r+1},U_{2r+2})$ according to the next digit. Hence, at termination, $a=U_N$. Since $k=\floor{\log_2N}+1$, \cref{alg:bitwise} uses exactly $k$ evaluations of $\Kop$, $k$ multiplications by $\rho$, and $\Theta(1)$ scalar memory. Thus $U_N$ is computed in $\Theta(\log N)$ scalar time for $N\ge2$, without forming $U_1,\ldots,U_{N-1}$.
\section{Exact dyadic phase and tight objective bounds}\label{sec:phase}\label{sec:objective}

Exact dyadic scaling turns the balanced Bellman sequence into a canonical continuum profile. In this section, we construct this interpolation and isolate its periodic modulation. The resulting phase law shows that $s_N=\Theta(N^{-p})$, while the normalized sequence $N^ps_N$ exhibits a nonconstant log-periodic oscillation rather than converging. It also determines the exact leading-order modulation of the tight primitive/OBS-S objective coefficient. Secondary regularity and certificate details are deferred to \cref{app:phase-regularity-proofs}.

\subsection{Dyadic scaling and the sharp power envelope}

The balanced split of $2N$ is $(N,N)$. By Corollary \ref{cor:balanced-optimality} and $\Kop(x,x)=\rho x$, we have $U_{2N}=\rho U_N$. Iteration, together with $s_N=U_N^{-1}$, yields
\begin{align}\label{eq:dyadic-scaling}
    U_{2^kN}=\rho^kU_N, \quad s_{2^kN}=\rho^{-k}s_N, \quad N\ge1,\ k\ge0.
\end{align}
Set
\begin{align*}
    p\coloneqq \log_2\rho, \quad q\coloneqq \frac1p=\log_\rho2,
\end{align*}
so that $2^p=\rho$, $\rho^q=2$, and $pq=1$. The following comparison identifies the power law compatible with \eqref{eq:dyadic-scaling}. The proof is given in \cref{app:silver-power}.

\begin{lemma} \label{lem:silver-power}
For all $x,y\geq0$,
\begin{align*}
    \Kop(x,y)\le (x^q+y^q)^{1/q}.
\end{align*}
Equality holds if and only if $x=y$ or $xy=0$.
\end{lemma}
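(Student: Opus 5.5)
The plan is to use homogeneity of both sides to reduce to a one-variable inequality, and then settle that inequality by concavity.

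\emph{Reduction.} The cases $xy=0$ are immediate, since $\Kop(0,y)=y=(0+y^q)^{1/q}$. Assume $x,y>0$. Both sides are symmetric and positively homogeneous of degree one, so normalize $x^q+y^q=2$. Equivalently, write $x=t^{1/q}$ and $y=(2-t)^{1/q}$ with $t\in(0,2)$. The claim becomes $\Kop(x,y)\le 2^{1/q}=\rho$, with equality only at $x=y=1$. This agrees with $\Kop(1,1)=\rho$ from \eqref{eq:K-special}. So it suffices to show that $g(t)\coloneqq\Kop\bigl(t^{1/q},(2-t)^{1/q}\bigr)$ has a strict global maximum at $t=1$ on $[0,2]$.

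\emph{Concavity.} I would show that $F(s,w)\coloneqq\Kop(s^{p},w^{p})$ is strictly concave along the segment $s+w=2$, where $p=1/q>1$. Then $g$ is strictly concave and symmetric about $t=1$, so its unique maximizer is $t=1$. This gives both the inequality and the equality case $x=y$.

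A cleaner route may be to use the defining quadratic. The value $r=\Kop(x,y)$ is the positive root of $r^2-(x+y)r-xy=0$. Therefore $\Kop(x,y)\le R$ is equivalent, for $R>0$, to $R^2-(x+y)R-xy\ge0$. Setting $R=(x^q+y^q)^{1/q}$ and dividing by $R^2$ with $a=x/R$ and $b=y/R$, the target is
\begin{align*}
    1-(a+b)-ab\ge0 \quad\text{subject to}\quad a^q+b^q=1,\quad a,b\ge0.
\end{align*}
Put $a=u^{p}$ and $b=(1-u)^{p}$ with $u\in[0,1]$. The claim is then $h(u)\coloneqq u^{p}+(1-u)^{p}+u^{p}(1-u)^{p}\le1$. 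We have $h(0)=h(1)=1$, and $h(1/2)=2^{1-p}+4^{-p}=2/\rho+1/\rho^2=1$, using $\rho^2=2\rho+1$. So equality must hold at $u\in\{0,1/2,1\}$ and nowhere else.

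\emph{Main obstacle.} The hard step is proving $h<1$ on $(0,1/2)$; the interval $(1/2,1)$ follows by symmetry. The function $h$ is neither convex nor concave globally, since it touches $1$ at three points. I would first try substituting $v=u(1-u)\in(0,1/4]$. This handles the product term, since $u^{p}(1-u)^{p}=v^{p}$, but $u^{p}+(1-u)^{p}$ is not a function of $v^p$ alone, so the substitution does not immediately linearize.

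Instead I would study $h'(u)=p\bigl[u^{p-1}-(1-u)^{p-1}\bigr]\bigl[1-\text{(correction)}\bigr]$. Expanding, $h'(u)=p\,u^{p-1}\bigl(1-(1-u)^{p}\bigr)-p(1-u)^{p-1}\bigl(1-u^{p}\bigr)$. The idea is to show that $h'$ has exactly one zero in $(0,1/2)$. For this I would set $\psi(u)=\log\bigl(u^{p-1}(1-(1-u)^p)\bigr)-\log\bigl((1-u)^{p-1}(1-u^p)\bigr)$ and establish monotonicity or single-crossing of $\psi$ via convexity estimates, using $1<p<2$.

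Near $0$ we have $h(u)\approx1-pu+u^{p}<1$, since $p<2$ makes the linear term dominate. Combined with $h(1/2)=1$ and the single critical point, this forces $h$ to decrease and then increase back to $1$ on $(0,1/2)$. Hence $h<1$ there, which yields the strict inequality and the stated equality cases.
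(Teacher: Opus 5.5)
Your reduction is correct: since $\Kop(x,y)$ is the positive root of $r^2-(x+y)r-xy$, the lemma is equivalent to $h(u)=u^p+(1-u)^p+u^p(1-u)^p\le 1$ on $[0,1]$, and indeed $h(0)=h(1/2)=h(1)=1$. Your first route has to be discarded. Since $g(0)=g(1)=g(2)=\rho$, the function $g$ is not strictly concave, and $t=1$ is not a strict maximizer on $[0,2]$ (the lemma itself has equality when $xy=0$).

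\textbf{The gap.} The whole content of the lemma, namely $h<1$ on $(0,1/2)$, is left unproved, and the tools you propose cannot deliver it.
\begin{itemize}
\item Your formula for $h'$ has the wrong signs. The correct derivative is $h'(u)=p\,u^{p-1}\bigl(1+(1-u)^p\bigr)-p\,(1-u)^{p-1}\bigl(1+u^p\bigr)$; what you wrote is the derivative of $u^p+(1-u)^p-u^p(1-u)^p$. So your $\psi$ is the wrong function.
\item Even the corrected $\psi$ cannot be monotone on $(0,1/2)$. It tends to $-\infty$ as $u\downarrow 0$ and vanishes at $u=1/2$. By Rolle it also vanishes at an interior critical point of $h$ (numerically near $u\approx 0.11$, where $h\approx 0.975$), and it is positive in between.
\item Single crossing of $h'$ cannot follow from estimates that use only $1<p<2$. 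One computes $h''(1/2)=8p\,2^{-p}\bigl(p-1-2^{-p}\bigr)$. This is negative at $p=\log_2\rho$ but positive at $p=3/2$. In the latter case $h'<0$ near both ends of $(0,1/2)$, so $h'$ has no single sign change there.
\end{itemize}
The exact value of $p$ therefore has to enter the sign analysis of $h'$ itself, not only the evaluation $h(1/2)=1$. Your proposal gives no mechanism for that. A minor point: near $0$ one has $h(u)=1-pu+2u^p+O(u^2)$, and the linear term dominates because $p>1$, not because $p<2$.

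\textbf{How the paper avoids this.} It chooses coordinates in which the derivative becomes a comparison of two explicit monotone quantities.
\begin{itemize}
\item With $x=e^{m-s}$, $y=e^{m+s}$ and $s\ge 0$, one gets $\Kop(x,y)=e^m\exp\bigl(\operatorname{arsinh}(\cosh s)\bigr)$ and $(x^q+y^q)^{1/q}=e^m\bigl(2\cosh(qs)\bigr)^{1/q}$.
\item So it suffices to show $G(s)=q^{-1}\log\bigl(2\cosh(qs)\bigr)-\operatorname{arsinh}(\cosh s)\ge 0$.
\item Then $G'(s)=\tanh(qs)-\tanh\alpha(s)$, where $\alpha(s)=\operatorname{artanh}\bigl(\sinh s/\sqrt{1+\cosh^2 s}\bigr)$ satisfies $\alpha'(s)=\cosh s/\sqrt{1+\cosh^2 s}$, strictly increasing from $1/\sqrt2$ to $1$.
\item Because $1/\sqrt2<q<1$, the function $\alpha(s)-qs$ is strictly convex, starts at $0$ with negative slope, and tends to $+\infty$. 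Hence $G'$ is positive and then negative, with exactly one sign change.
\item The precise value of $q$ enters only through $G(0)=q^{-1}\log 2-\operatorname{arsinh}1=0$, i.e.\ $2^{1/q}=\rho$. Together with $G(s)\to 0$ as $s\to\infty$, this gives $G>0$ for $s>0$, which yields the inequality and its equality cases.
\end{itemize}
If you want to keep your $u$-parametrization, you need a substitute for this factorization.
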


The next proposition shows that the power law is sharp and identifies the equality cases, which follow directly from Lemma \ref{lem:silver-power}. 

\begin{proposition}\label{prop:sharp-growth}
For every $N\ge1$,
\begin{align*}
    U_N\le N^p.
\end{align*}
Equivalently, $s_N\ge N^{-p}$. Equality holds if and only if $N$ is a power of two.
\end{proposition}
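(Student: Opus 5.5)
We argue by strong induction on $N$, using the balanced recurrence~\eqref{eq:balanced-recurrence} from Corollary~\ref{cor:balanced-optimality} together with Lemma~\ref{lem:silver-power}. The base case is $U_1=1=1^p$, and $N=1=2^0$ is a power of two, so equality holds there. For $N\ge2$ set $m=\floor{N/2}$ and $n=\ceil{N/2}$, so that $1\le m\le n<N$ and $m+n=N$. By the induction hypothesis $U_m\le m^p$ and $U_n\le n^p$. Monotonicity of $\Kop$ (Proposition~\ref{lem:K-properties}) then gives
\begin{align*}
    U_N=\Kop(U_m,U_n)\le\Kop(m^p,n^p)\le\bigl(m^{pq}+n^{pq}\bigr)^{1/q}=(m+n)^p=N^p,
\end{align*}
where the second inequality is Lemma~\ref{lem:silver-power} and we used $pq=1$. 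Since $s_N=U_N^{-1}$, the bound $U_N\le N^p$ is equivalent to $s_N\ge N^{-p}$.

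For the equality case, first take $N=2^k$. The dyadic scaling~\eqref{eq:dyadic-scaling} gives $U_{2^k}=\rho^kU_1=\rho^k=2^{pk}=N^p$.

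Conversely, suppose $U_N=N^p$ with $N\ge2$. Both inequalities in the chain above must then be equalities. In the first step we have $\Kop(U_m,U_n)=\Kop(m^p,n^p)$ with $U_m\le m^p$ and $U_n\le n^p$, so strict coordinatewise monotonicity of $\Kop$ forces $U_m=m^p$ and $U_n=n^p$. In the second step, the equality case of Lemma~\ref{lem:silver-power} forces $m^p=n^p$ or $m^pn^p=0$. The latter is impossible because $m,n\ge1$, so $m=n$, meaning $N=2m$ is even. Since $U_m=m^p$ and $m<N$, the induction hypothesis shows that $m$ is a power of two, and hence so is $N$.

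We expect no real obstacle here: all the analytic content is in Lemma~\ref{lem:silver-power}, which we are taking as given. The only point requiring care is that the converse direction must use the strict monotonicity of $\Kop$ to transfer equality down to both children before invoking the equality case of Lemma~\ref{lem:silver-power}.
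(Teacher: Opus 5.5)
Your proof is correct and follows essentially the same route as the paper: strong induction through the balanced recurrence, monotonicity of $\Kop$, and Lemma~\ref{lem:silver-power} with $pq=1$. Your equality analysis also matches the paper's, which notes that the second inequality is strict for odd $N$ and that for $N=2a$ equality reduces to $U_a=a^p$; you simply spell out the strict-monotonicity step and the dyadic direction more explicitly.
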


\begin{proof}
We argue by induction on $N$. For $N=1$, we directly have $U_1 = 1 = 1^p$. For $N\ge2$, let $a=\floor{N/2}$ and $b=\ceil{N/2}$. Monotonicity of $\Kop$, the induction hypothesis, Lemma \ref{lem:silver-power}, and $pq=1$ give
\begin{align*}
    U_N=\Kop(U_a,U_b)
    \le \Kop(a^p,b^p)
    \le (a^{pq}+b^{pq})^{1/q}
    =(a+b)^p=N^p.
\end{align*}
If $N$ is odd, then $a\ne b$, so the second inequality is strict. If $N=2a$, equality holds precisely when $U_a=a^p$. Recursing proves that equality occurs exactly at powers of two.
\end{proof}

\subsection{Adjacent ratios and the canonical interpolation}

The regularity of the phase profile is controlled by a one-dimensional dynamical system. For $N \geq 1$, define 
\begin{align*}
    R_N\coloneqq \frac{U_{N+1}}{U_N}, \quad
    k(r)\coloneqq \Kop(1,r), \quad
    T_0(r)\coloneqq \frac{k(r)}\rho, \quad
    T_1(r)\coloneqq \frac{\rho r}{k(r)}.
\end{align*}

\begin{lemma} \label{lem:ratio-contraction}
For every $N\ge1$,
\begin{align}\label{eq:R-recursion}
    R_{2N}=T_0(R_N), \quad R_{2N+1}=T_1(R_N),
\end{align}
and
\begin{align}\label{eq:R-bound}
    1<R_N\le\rho, \quad
    0<R_N-1\le\frac{\rho-1}{2^{\floor{\log_2N}}}
    \le\frac{2(\rho-1)}N.
\end{align}
\end{lemma}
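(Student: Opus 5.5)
The plan is to derive the recursion \eqref{eq:R-recursion} directly from the balanced recurrence \eqref{eq:balanced-recurrence} together with homogeneity of $\Kop$. Then the bounds in \eqref{eq:R-bound} follow from a contraction property of $T_0$ and $T_1$ on the interval $[1,\rho]$, by induction on $\floor{\log_2 N}$.

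For the recursion, \eqref{eq:balanced-recurrence} and \eqref{eq:K-special} give $U_{2N}=\rho U_N$, $U_{2N+1}=\Kop(U_N,U_{N+1})$ and $U_{2N+2}=\rho U_{N+1}$. Homogeneity yields $\Kop(U_N,U_{N+1})=U_N\,k(R_N)$. Hence
\begin{align*}
    R_{2N}=\frac{U_N k(R_N)}{\rho U_N}=T_0(R_N),
    \quad
    R_{2N+1}=\frac{\rho U_{N+1}}{U_N k(R_N)}=\frac{\rho R_N}{k(R_N)}=T_1(R_N).
\end{align*}

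For the bounds, the base case is $R_1=U_2/U_1=\rho$, which fits $1<R_1\le\rho$ and $R_1-1\le\rho-1$ with $\floor{\log_2 1}=0$. Strict monotonicity of $\{U_N\}$ (Corollary \ref{cor:balanced-optimality}) gives $R_N>1$ for all $N$. For the induction step, it suffices to show two things. First, $T_0$ and $T_1$ map $(1,\rho]$ into itself. Second,
\begin{align*}
    |T_i(r)-1|\le\tfrac12(r-1)\quad\text{on }[1,\rho].
\end{align*}
Since $\floor{\log_2(2N)}=\floor{\log_2(2N+1)}=\floor{\log_2N}+1$, this halving propagates the bound $(\rho-1)/2^{\floor{\log_2N}}$. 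The final inequality then follows from $2^{\floor{\log_2N}}>N/2$.

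Both maps fix $1$, since $k(1)=\rho$. The map $T_0$ is increasing because $k$ is, and $T_0(\rho)=\Kop(1,\rho)/\rho$. This value is at most $\rho$, because $\Kop(1,\rho)\le\Kop(\rho,\rho)=\rho^2$.

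For $T_1$, write $T_1(r)=\rho/\Kop(1/r,1)$; it is increasing since $\Kop$ is increasing. Combined with $T_1(1)=1$, this gives $T_1(r)>1$ for $r>1$. Also $T_1(r)\le\rho$, because $k(r)\ge r$ (indeed $\Kop(1,r)>r$).

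The main obstacle is the Lipschitz-type estimate with constant $1/2$. For $T_0$ this means $k(r)-\rho\le\tfrac{\rho}{2}(r-1)$. Here $k$ is concave, being a homogeneous function that is concave as the sum of linear terms and a norm-like square root, so it suffices to check $k'(1)\le\rho/2$. A direct computation gives $k'(1)=\tfrac12\bigl(1+\tfrac{8}{2\sqrt8}\bigr)=\tfrac{1+\sqrt2}{2}=\rho/2$, so equality holds at $1$ and concavity gives the bound.

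For $T_1$, we need $\rho r-k(r)\le\tfrac12(r-1)k(r)$, that is, $\rho r\le k(r)(r+1)/2$. This I would verify algebraically by squaring, using that $k$ solves $k^2-(1+r)k-r=0$. Alternatively, $T_1(r)=\rho r/k(r)$ and $T_0(1/r)\cdot$ are related by the symmetry $k(r)=rk(1/r)$, giving $T_1(r)=1/T_0(1/r)$. The required bound then reduces to a derivative check at $r=1$ together with convexity or concavity on $[1,\rho]$. I expect this $T_1$ estimate to be the fiddliest step.
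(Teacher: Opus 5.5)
Your outline is the paper's proof. You derive the recursion from $U_{2N}=\rho U_N$, $U_{2N+1}=\Kop(U_N,U_{N+1})$, $U_{2N+2}=\rho U_{N+1}$ and homogeneity. You then show $T_0,T_1$ contract by a factor $\tfrac12$ toward their common fixed point $1$ on the invariant interval $[1,\rho]$, propagate this digit by digit, and get $R_N>1$ from strict monotonicity of $U$. Two loose ends need tightening.

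\textbf{Concavity of $k$.} Your stated reason is not right. A genuine norm would make $k$ convex, and $x^2+6xy+y^2$ is an indefinite form, not a norm. Concavity instead follows from $k''(r)=-4/(r^2+6r+1)^{3/2}<0$, which is the paper's $T_0''$ up to the factor $\rho$.

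\textbf{The $T_1$ bound.} You left this step open, but your own squaring idea closes it in one line. Since $k^2=(1+r)k+r$, the needed inequality $2\rho r\le k(r)(r+1)=k(r)^2-r$ is equivalent to $k(r)^2\ge(2\rho+1)r=\rho^2 r$, that is, $\Kop(1,r)\ge\rho\sqrt r$. This is AM--GM: $\Kop(1,r)=\tfrac12\bigl(1+r+\sqrt{(1+r)^2+4r}\bigr)$, and $1+r\ge 2\sqrt r$ gives $\Kop(1,r)\ge\sqrt r+\sqrt{2r}=\rho\sqrt r$. Symmetrically, $4r\le(1+r)^2$ gives $k(r)\le\rho(1+r)/2$, which is the $T_0$ bound without any concavity argument.

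The paper takes a different route. It uses $r/k(r)=k(r)-(1+r)$ to write $T_1(r)=\tfrac{\rho}{2}\bigl(\Delta(r)-r-1\bigr)$, and reads $0<T_i'\le\tfrac12$ off the decreasing quantity $(r+3)/\Delta(r)$. The AM--GM version avoids derivatives altogether and gives both halving bounds for every $r\ge1$.
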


The proof is deferred to \cref{app:proof-ratio-contraction}. Each binary digit of $N$ selects one of the maps $T_0$ and $T_1$, both of which contract toward their common fixed point $1$ by a factor at most $1/2$. In particular, Proposition \ref{prop:sharp-growth} and Lemma \ref{lem:ratio-contraction} give the estimate
\begin{align*}
    0<U_{N+1}-U_N
    \le\frac{2(\rho-1)}N U_N
    \le2(\rho-1)N^{p-1}.
\end{align*}

For each $k\ge0$, define the level-$k$ dyadic grid and its intervals by
\begin{align*}
    \mathcal D_k \coloneqq \left\{\frac{j}{2^k}:2^k\le j\le2^{k+1}\right\}, \quad    I_{k,j} \coloneqq \left[\frac{j}{2^k},\frac{j+1}{2^k}\right], \; 2^k\le j<2^{k+1}.
\end{align*}
The elements of $\mathcal D_k$ are called level-$k$ nodes, the sets $I_{k,j}$ are called level-$k$ intervals, and $\mathcal D \coloneqq \bigcup_{k\ge0}\mathcal D_k$ is the set of dyadic rationals in $[1,2]$. For $x=j/2^k\in\mathcal D_k$, define
\begin{align}\label{eq:F-dyadic}
    F(x) \coloneqq \rho^{-k}U_j.
\end{align}
This is well defined since the definition is independent of the chosen level by \eqref{eq:dyadic-scaling}. Let $F_k$ be the continuous function that agrees with $F$ on $\mathcal D_k$ and is affine on every $I_{k,j}$. For the endpoints $x<y$ of any level-$k$ interval, the balanced recurrence gives the nonlinear midpoint rule
\begin{align}\label{eq:midpoint-rule}
    F\left(\frac{x+y}{2}\right)
    =\frac1\rho\Kop\bigl(F(x),F(y)\bigr).
\end{align}

\begin{theorem}\label{thm:Lipschitz-F} 
The function in \eqref{eq:F-dyadic} extends uniquely to a strictly increasing bi-Lipschitz map
\begin{align*}
    F:[1,2]\to[1,\rho]
\end{align*}
that satisfies $F(1)=1$, $F(2)=\rho$, and \eqref{eq:midpoint-rule} on every dyadic interval. More precisely,
\begin{align}\label{eq:bilipschitz}
    c_-(y-x)\le F(y)-F(x)\le c_+(y-x),
    \quad 1\le x<y\le2,
\end{align}
where
\begin{align*}
    c_-\coloneqq(\rho-1)\left(1-\frac1{2\rho}\right),
    \quad
    c_+\coloneqq(\rho-1)\exp\left(\frac1{2\rho}\right).
\end{align*}
In particular, one may take $L_F\coloneqq c_+$ as a Lipschitz constant for $F$.
\end{theorem}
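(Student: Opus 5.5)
The plan is to prove the bi-Lipschitz estimate \eqref{eq:bilipschitz} first on the dense set $\mathcal D$ of dyadic nodes, and then extend by uniform continuity. Since $\mathcal D$ is dense in $[1,2]$, a Lipschitz bound on $\mathcal D$ gives a unique continuous extension. That extension inherits both inequalities in \eqref{eq:bilipschitz}, hence is strictly increasing, and maps $[1,2]$ onto $[F(1),F(2)]=[1,\rho]$, using $U_1=1$ and $F(2)=\rho^{-1}U_2=1$. The midpoint rule \eqref{eq:midpoint-rule} already holds at dyadic endpoints. It is the only dyadic-interval statement claimed, so it transfers trivially, and uniqueness is automatic from density.

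For the estimate on $\mathcal D$, it suffices to bound the slope of each level-$k$ interval $I_{k,j}$ with $2^k\le j<2^{k+1}$, since any $x<y$ in $\mathcal D$ lie on a common grid $\mathcal D_k$ and the increments add. That slope is
\begin{align*}
    \sigma_{k,j}\coloneqq 2^k\rho^{-k}(U_{j+1}-U_j)=2^k\rho^{-k}U_j(R_j-1).
\end{align*}
We want $c_-\le\sigma_{k,j}\le c_+$. At $k=0$ we have $\sigma_{0,1}=U_2-U_1=\rho-1$. Passing from $I_{k,j}$ to its two children $I_{k+1,2j}$ and $I_{k+1,2j+1}$, the recurrences $U_{2j}=\rho U_j$ and $U_{2j+1}=U_j k(R_j)$ together with \eqref{eq:R-recursion} give
\begin{align*}
    \frac{\sigma_{k+1,2j}}{\sigma_{k,j}}=\frac{2(k(r)-\rho)}{\rho(r-1)},\qquad
    \frac{\sigma_{k+1,2j+1}}{\sigma_{k,j}}=\frac{2(\rho r-k(r))}{\rho(r-1)},\qquad r=R_j.
\end{align*}
The two ratios average to $1$, as they must, since the child increments sum to the parent increment.

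The key step is a one-variable estimate: each ratio equals $1+O(r-1)$, with an explicit constant. Expanding $k(r)=\Kop(1,r)$ at $r=1$ gives $k(1)=\rho$ and $k'(1)=\rho/2$ by symmetry and homogeneity (Euler's relation). So both ratios tend to $1$, and the deviation is controlled by the second derivative of $k$. I would aim to show
\begin{align*}
    \left|\frac{\sigma_{k+1,j'}}{\sigma_{k,j}}-1\right|\le C\,(R_j-1),
\end{align*}
with $C$ chosen so that the accumulated product of ratios stays within $[1-\tfrac1{2\rho},\exp(\tfrac1{2\rho})]$. Iterating from the root, Lemma \ref{lem:ratio-contraction} gives $R_j-1\le(\rho-1)2^{-k}$ along any dyadic path. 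Then
\begin{align*}
    \sigma_{k,j}\le(\rho-1)\prod_{i\ge0}\bigl(1+C(\rho-1)2^{-i}\bigr)\le(\rho-1)\exp\bigl(2C(\rho-1)\bigr).
\end{align*}
For the lower bound I would use $\prod_i(1-\epsilon_i)\ge1-\sum_i\epsilon_i$ with the same sum. This matches the claimed constants provided $2C(\rho-1)=1/(2\rho)$.

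The main obstacle is getting the sharp constant $C$. The crude Taylor bound may not give exactly $1/(2\rho)$, and the contraction from Lemma \ref{lem:ratio-contraction} may need to be used in its finer form, for instance $R_{2N}-1\le(R_N-1)/2$ together with the initial value $R_1=\rho$. First I would try to compute the ratio exactly, writing $k(r)-\rho-\tfrac{\rho}{2}(r-1)$ via the conjugate expression $\sqrt{1+6r+r^2}$, and bound it by $\kappa(r-1)^2$ on $[1,\rho]$ using concavity of $k$, since $\Kop$ is homogeneous of degree one and quasi-concave. If the constants still do not close, I would use a sharper summation $\sum_i(R_{j_i}-1)$ along the path, exploiting that the maps $T_0,T_1$ contract by exactly a factor tending to $1/2$ near $1$.
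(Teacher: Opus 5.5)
Your plan is the paper's proof. Your slope ratios are exactly its weights $w_0(R_j)=2(T_0(R_j)-1)/(R_j-1)$ and $w_1(R_j)=2-w_0(R_j)$, and the constant you worried about does close with the plain Taylor bound: $T_0''(r)=-4/\bigl(\rho(r^2+6r+1)^{3/2}\bigr)$ has maximal modulus $M=1/(4\rho(\rho-1))$ on $[1,\rho]$, attained at $r=1$, so $|w_\delta(r)-1|\le M(r-1)$ and the path sum is $2M(\rho-1)=1/(2\rho)$ exactly, with no finer summation needed. The only slip is that $F(2)=U_2=\rho$, not $\rho^{-1}U_2=1$.
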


The proof is given in \cref{app:proof-bilipschitz}. Its key point is that \eqref{eq:R-bound} makes the multiplicative slope distortion at successive subdivisions summable. Uniform positive upper and lower bounds on all dyadic slopes then yield both the extension and \eqref{eq:bilipschitz}.

\subsection{Exact phase and its direct consequences}

The interpolation isolates the dyadic mantissa $N/2^{\floor{\log_2N}}$, while exact scaling accounts for the remaining growth. Passing to logarithmic coordinates therefore produces the principal result of this section.

\begin{theorem} \label{thm:phase}
For $0\le t\le1$, define 
\begin{align*}
    \Phi(t)\coloneqq \rho^{-t}F(2^t),
\end{align*}
and extend $\Phi$ one-periodically. Then $\Phi$ is positive, Lipschitz, and nonconstant, and
\begin{align}\label{eq:exact-phase}
    U_N=N^p\Phi\bigl(\{\log_2N\}\bigr) \quad (N\ge1),
\end{align}
where $\{\cdot\}$ takes the decimal part of a real. It is the unique continuous one-periodic function with this property. Furthermore,
\begin{align}\label{eq:phase-extrema}
    0<\phimin\coloneqq\min_{t\in[0,1]}\Phi(t)<1,
    \quad
    \max_{t\in[0,1]}\Phi(t)=1.
\end{align}
One valid Lipschitz constant on the phase circle is $L_\Phi\coloneqq2(\log 2)c_++\rho\log\rho$.
\end{theorem}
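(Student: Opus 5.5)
The plan is to work directly from the interpolation $F$ of \cref{thm:Lipschitz-F} and the scaling identity \eqref{eq:dyadic-scaling}. First I would check that $\Phi$ is well defined and continuous on the circle. On $[0,1]$ the map $t\mapsto\rho^{-t}F(2^t)$ is continuous, since $F$ is continuous, and $\Phi(0)=F(1)=1$ while $\Phi(1)=\rho^{-1}F(2)=1$. So the one-periodic extension is continuous. Positivity is immediate because $F\ge1$.

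For \eqref{eq:exact-phase}, I write $N=2^kx$ with $k=\floor{\log_2N}$ and $x=N/2^k\in[1,2)\cap\mathcal D_k$. Then $t=\{\log_2N\}=\log_2x$, and \eqref{eq:F-dyadic} gives $U_N=\rho^kF(x)$. On the other hand,
\begin{align*}
    N^p\Phi(t)=2^{kp}x^p\rho^{-t}F(x)=\rho^k\rho^{t}\rho^{-t}F(x)=\rho^kF(x),
\end{align*}
using $2^p=\rho$ and $x^p=\rho^{\log_2x}$. The two expressions agree.

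Uniqueness comes from density. If a continuous one-periodic $\Psi$ satisfies the same identity, then $\Psi(\log_2 x)=\Phi(\log_2x)$ for every $x=j/2^k$ with $2^k\le j<2^{k+1}$. The points $\log_2x$ are dense in $[0,1)$, since $\mathcal D$ is dense in $[1,2]$ and $\log_2$ is a homeomorphism. Hence $\Psi=\Phi$.

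For the Lipschitz constant on $[0,1]$, I differentiate in the a.e. sense, which is legitimate because $F$ is Lipschitz:
\begin{align*}
    \Phi'(t)=-\log\rho\,\rho^{-t}F(2^t)+\rho^{-t}F'(2^t)\,2^t\log 2.
\end{align*}
I bound each term by absolute value. In the first, $\rho^{-t}\le1$ and $F\le\rho$, so it contributes at most $\rho\log\rho$. In the second, $|F'|\le c_+$ and $2^t\rho^{-t}\le 2$, so it contributes at most $2(\log 2)c_+$. This gives $L_\Phi$ on $[0,1]$. Because $\Phi(0)=\Phi(1)$, the same constant works across the period boundary: for points on either side of an integer, I split at that integer and use the triangle inequality.

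For the extrema, \cref{prop:sharp-growth} gives $U_N\le N^p$, so $\Phi\le1$ on the dense set $\{\log_2(j/2^k)\}$, and by continuity $\Phi\le1$ everywhere. The value $\Phi(0)=1$ shows that the maximum equals $1$. The minimum $\phimin$ is attained by compactness and is positive since $\Phi>0$. For $\phimin<1$, I take $N=3$: \cref{prop:sharp-growth} gives $U_3<3^p$, so $\Phi(\log_23-1)<1$. This also shows that $\Phi$ is nonconstant.

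The only delicate step is the Lipschitz bound, specifically justifying the a.e. derivative and the chain rule with the bi-Lipschitz $F$. To avoid that, I could instead argue with difference quotients, writing $\Phi(t)-\Phi(s)$ as $\rho^{-t}\bigl(F(2^t)-F(2^s)\bigr)+F(2^s)\bigl(\rho^{-t}-\rho^{-s}\bigr)$. I then bound the first part using \eqref{eq:bilipschitz} together with $|2^t-2^s|\le 2\log2\,|t-s|$ and $\rho^{-t}\le 1$, and the second part using $|\rho^{-t}-\rho^{-s}|\le\log\rho\,|t-s|$ and $F\le\rho$. This yields the same constant $L_\Phi$ without any differentiability argument.
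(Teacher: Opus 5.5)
Your proposal is correct and follows essentially the same route as the paper: the identity $U_N=\rho^kF(2^t)=N^p\Phi(t)$ via $N^p=\rho^{k+t}$, the bound $\Phi\le1$ from Proposition~\ref{prop:sharp-growth} plus density, the witness $U_3<3^p$ at $t=\log_2(3/2)$, and uniqueness by density of the phases. Your difference-quotient fallback is exactly the paper's Lipschitz estimate $|\Phi(t)-\Phi(s)|\le c_+|2^t-2^s|+\rho|\rho^{-t}-\rho^{-s}|$, extended across the period boundary in the same way, so the a.e.\ differentiation detour is unnecessary (though also valid).
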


\begin{proof}
Let $k=\floor{\log_2N}$, $x=N/2^k\in[1,2)$, and $t=\log_2x=\{\log_2N\}$. By \eqref{eq:F-dyadic}, $F(x)=\rho^{-k}U_N$, while $N^p=\rho^{k+t}$. Hence
\begin{align*}
    U_N=\rho^kF(2^t)=N^p\rho^{-t}F(2^t)=N^p\Phi(t),
\end{align*}
which proves \eqref{eq:exact-phase}. Since $\Phi(0)=F(1)=1$ and $\Phi(1)=\rho^{-1}F(2)=1$, the periodic extension is continuous. For $s,t\in[0,1]$, \eqref{eq:bilipschitz} gives
\begin{align*}
    |\Phi(t)-\Phi(s)| \le c_+|2^t-2^s|+\rho|\rho^{-t}-\rho^{-s}| \le\bigl(2(\log 2)c_++\rho\log\rho\bigr)|t-s|.
\end{align*}
The same estimate across the identified endpoints gives the stated Lipschitz constant. Positivity follows from $F>0$.

For $x=j/2^m\in\mathcal D$, Proposition \ref{prop:sharp-growth} gives
\begin{align*}
    F(x)=\rho^{-m}U_j\le\rho^{-m}j^p=x^p.
\end{align*}
Density extends this bound to $[1,2]$, so $\Phi\le1$. At $t_*=\log_2(3/2)$, the strict part of Proposition \ref{prop:sharp-growth} gives
\begin{align*}
    \Phi(t_*)=\frac{U_3}{3^p}<1.
\end{align*}
Thus \eqref{eq:phase-extrema} holds and $\Phi$ is nonconstant. Finally, the phases $\{\{\log_2N\}:N\ge1\}$ are dense: for any $\theta\in[0,1)$, take $N_k=\floor{2^{k+\theta}}$. Any other continuous periodic profile satisfying \eqref{eq:exact-phase} agrees with $\Phi$ on this dense set and hence everywhere.
\end{proof}

\begin{remark}
The level-17 rational certificate recorded in \cref{app:certificates} yields
\begin{align*}
    0.99281236<\phimin<0.99281934.
\end{align*}
\end{remark}

Figure~\ref{fig:phi-phase} visualizes the exact phase law and the
certified enclosure for its minimum.

\begin{figure}[ht]
  \centering
  \includegraphics[width=0.96\linewidth]{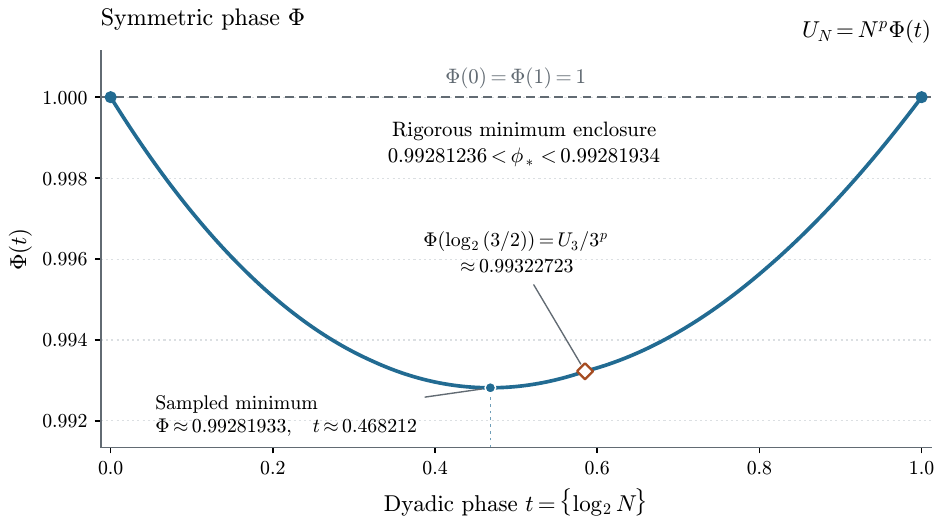}
  \caption{The symmetric dyadic phase
  $\Phi(t)=\rho^{-t}F(2^t)$, where $t=\{\log_2 N\}$ and $N=n+1$.
  The maximum is $\Phi(0)=\Phi(1)=1$. A level-22 dyadic grid gives
  a sampled minimum of approximately $0.99281933$ near $t=0.4682$;
  the rigorous enclosure from Appendix~\ref{app:certificates} is
  $0.99281236<\phi_*<0.99281934$.
  The diamond marks the nonconstancy witness
  $\Phi(\log_2(3/2))=U_3/3^p\approx0.99322723$.
  The curve is generated by the balanced recurrence; the sampled
  extremizing phase is a numerical estimate.}
  \label{fig:phi-phase}
\end{figure}

The exact phase identity now gives the complete asymptotic picture. For a real sequence $\{a_N\}_{N\ge1}$, let $\operatorname{Clust}(a_N)
:=
\left\{
\ell\in\mathbb{R}:
a_{N_j}\to\ell
\text{ for some subsequence }N_j\to\infty
\right\}$ denote its set of subsequential limits.

\begin{corollary}\label{cor:cluster-sets}
The normalized scalar and rate sequences satisfy
\begin{align*}
    \Clust\left(\frac{U_N}{N^p}\right)=[\phimin,1],
    \quad
    \Clust(N^ps_N)=\left[1,\frac1{\phimin}\right].
\end{align*}
In particular, neither sequence converges.
\end{corollary}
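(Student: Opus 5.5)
By \eqref{eq:exact-phase}, $U_N/N^p=\Phi(\{\log_2N\})$ for every $N$. Since $\Phi$ is continuous and one-periodic with range $[\phimin,1]$ by \eqref{eq:phase-extrema}, the first cluster set is determined by the cluster set of the phases $\{\log_2N\}$ on the circle. For the rate sequence, Proposition~\ref{prop:scalar-equivalence} gives $s_N=1/U_N$, so $N^ps_N=1/\Phi(\{\log_2N\})$. The map $y\mapsto 1/y$ is a homeomorphism of $[\phimin,1]$ onto $[1,1/\phimin]$, and $\phimin>0$, so the second identity follows from the first. Nonconvergence holds because $\phimin<1$: each cluster set is a nondegenerate interval.

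Inclusion $\subseteq$. Every term $U_N/N^p$ lies in the compact set $[\phimin,1]$. Any subsequential limit therefore lies there as well.

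Inclusion $\supseteq$. Fix $\ell\in[\phimin,1]$. Since $\Phi$ is continuous on $[0,1]$, the intermediate value theorem gives some $\theta\in[0,1]$ with $\Phi(\theta)=\ell$; by periodicity we may take $\theta\in[0,1)$. Set $N_k=\floor{2^{k+\theta}}$, as in the proof of Theorem~\ref{thm:phase}. Then $2^k\le N_k<2^{k+1}$, so $\floor{\log_2N_k}=k$. Moreover
\begin{align*}
0\le \theta-\{\log_2N_k\}=\log_2\frac{2^{k+\theta}}{N_k}\le\log_2\Bigl(1+\frac1{N_k}\Bigr)\to0.
\end{align*}
Hence $\{\log_2N_k\}\to\theta$. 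Because $\Phi$ is Lipschitz, $U_{N_k}/N_k^p=\Phi(\{\log_2N_k\})\to\Phi(\theta)=\ell$, and $N_k\to\infty$.

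One boundary subtlety: the $\floor{\cdot}$ could push the phase across $0$ when $\theta$ is near $0$. This causes no difficulty, since $\Phi$ is continuous on the circle, $\Phi(0)=\Phi(1)$, and we chose $\theta<1$. There is no real obstacle beyond this; the only substantive inputs are the exact phase law \eqref{eq:exact-phase}, the continuity of $\Phi$, and the extremal values in \eqref{eq:phase-extrema}, all established in Theorem~\ref{thm:phase}.
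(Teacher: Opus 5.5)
Your proof is correct and takes essentially the same route as the paper: the exact phase law reduces both cluster sets to the values of $\Phi$ along the phases $\{\log_2 N\}$, the integers $\lfloor 2^{k+\theta}\rfloor$ realize every phase, and reciprocation via $s_N=1/U_N$ gives the rate statement. Your boundary remark is unnecessary, since $2^k\le N_k<2^{k+1}$ already keeps $\{\log_2 N_k\}$ in $[0,\theta]$, so the phase never wraps around zero.
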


\begin{proof}
By \eqref{eq:exact-phase}, $U_N/N^p=\Phi(\theta_N)$ with $\theta_N=\{\log_2N\}$. The sequence $\{\theta_N\}$ is dense on the phase circle, and every phase is approached along integers tending to infinity. Continuity of $\Phi$ and \eqref{eq:phase-extrema} therefore give
\begin{align*}
    \Clust\left(\frac{U_N}{N^p}\right)
    =\Phi([0,1])=[\phimin,1].
\end{align*}
Since $N^ps_N=(U_N/N^p)^{-1}$ and $\Phi$ is positive, taking reciprocals proves the second identity.
\end{proof}

The corollary upgrades the exact formula at individual horizons to a complete description of all normalized subsequential limits. Its intervals are nondegenerate because $\phimin<1$: exact dyadic scaling fixes the exponent $p$, but the dyadic mantissa selects the leading constant. Further consequences, including the marginal-increment law, fixed dyadic rays, derivative products, and certified bounds for $\phimin$, are collected in \cref{app:phase-regularity-proofs}.

\subsection{Tight objective-gap phase}

We now translate the scalar phase into worst-case guarantees for gradient descent with recursive primitive schedules and OBS-S. Restore a general smoothness constant $L$, and apply a dimensionless schedule $h=(h_0,\ldots,h_{N-2})^\top$ through
\begin{align*}
    x_{i+1}=x_i-\frac{h_i}{L}\nabla f(x_i).
\end{align*}
For an optimized primitive schedule of $N-1$ steps, Proposition \ref{prop:scalar-equivalence} gives $H_N=U_N-1$. Hence the tight Huber certificate \eqref{eq:primitive-gap-imported} becomes
\begin{align}\label{eq:objective-bound}
    f(x_{N-1})-f_*
    \le B_N\frac{L}{2}\norm{x_0-x_*}^2,
    \quad
    B_N\coloneqq\frac1{2U_N-1}.
\end{align}
The same coefficient governs the objective consequence of OBS-S. Indeed, its optimized s-composable rate is $\eta=s_N=1/U_N$, and~\eqref{eq:s-gap-imported} gives
\begin{align*}
    \frac{\eta}{2-\eta}
    =\frac{1/U_N}{2-1/U_N}
    =\frac{1}{2U_N-1}=B_N.
\end{align*}
For primitive schedules, equality in~\eqref{eq:objective-bound} is attained on the one-dimensional Huber instance of \citet[Theorem~2.6 and Corollary~2.9]{ZhangJiang}; for OBS-S, the matching one-dimensional quadratic and Huber instances are supplied by \citet[Proposition~1]{GrimmerShuWangMOR}. Thus $B_N$ is the exact worst-case objective coefficient for the corresponding optimized schedules, not merely an upper estimate produced by the scalar analysis.

It is useful to distinguish $B_N$ from the s-composable rate itself. Their exact relation is
\begin{align}\label{eq:B-vs-s}
    B_N=\frac{s_N}{2-s_N},
     \quad
    B_N-\frac{s_N}{2}=\frac{s_N^2}{2(2-s_N)}>0.
\end{align}
Consequently, the objective coefficient is asymptotically one half of the OBS-S rate, while the positive correction in~\eqref{eq:B-vs-s} records the finite-horizon effect of the term $-1$ in the denominator $2U_N-1$.

\begin{theorem}\label{thm:objective-phase}
Let $\theta_N = \{\log_2N\}$. For every $N\ge1$,
\begin{align}\label{eq:B-exact}
    B_N=\frac1{2N^p\Phi(\theta_N)-1}.
\end{align}
As $N \to \infty$,
\begin{align}\label{eq:B-asymptotic}
    B_N
    =\frac{N^{-p}}{2\Phi(\theta_N)}
    +O(N^{-2p}),
\end{align}
where the $O(N^{-2p})$ remainder is uniform in the dyadic phase $\theta_N$.
Furthermore,
\begin{align*}
    \Clust(N^pB_N)
    =\left[\frac12,\frac1{2\phimin}\right].
\end{align*}
Consequently, the normalized tight objective coefficient has no single asymptotic constant.
\end{theorem}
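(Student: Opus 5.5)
The plan is to treat the statement as a direct corollary of the exact phase identity \eqref{eq:exact-phase} in Theorem~\ref{thm:phase}, together with the cluster-set computation in Corollary~\ref{cor:cluster-sets}. For \eqref{eq:B-exact}, substitute $U_N=N^p\Phi(\theta_N)$ into the definition $B_N=1/(2U_N-1)$ from \eqref{eq:objective-bound}. This is valid for every $N\ge1$ because $U_N\ge1$, so the denominator $2U_N-1\ge1$ is positive.

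For \eqref{eq:B-asymptotic}, write $X=2N^p\Phi(\theta_N)$ and use the elementary identity
\begin{align*}
    \frac{1}{X-1}-\frac1X=\frac{1}{X(X-1)}.
\end{align*}
The uniform lower bound $\Phi\ge\phimin>0$ from \eqref{eq:phase-extrema} gives $X\ge2\phimin N^p$. For $N$ large enough that $2\phimin N^p\ge2$, we also have $X-1\ge X/2$. The remainder is then at most $2/X^2\le (2\phimin^2)^{-1}N^{-2p}$, and this bound does not depend on $\theta_N$, which gives the claimed uniformity. The main term is $1/X=N^{-p}/(2\Phi(\theta_N))$.

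For the cluster set, multiply by $N^p$. Since $p>0$, we get
\begin{align*}
    N^pB_N=\frac{1}{2\Phi(\theta_N)}+O(N^{-p}),
\end{align*}
so $N^pB_N$ and $1/(2\Phi(\theta_N))=\tfrac12 N^ps_N$ differ by a null sequence. Two sequences that differ by a null sequence have the same set of subsequential limits. Corollary~\ref{cor:cluster-sets} gives $\Clust(N^ps_N)=[1,1/\phimin]$, so halving yields $[\tfrac12,1/(2\phimin)]$. Because $\phimin<1$, this interval is nondegenerate, and so $N^pB_N$ has no limit.

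No step here is a real obstacle. The only point needing care is making the remainder uniform in the phase, and that rests on the strictly positive lower bound $\phimin$ rather than pointwise positivity of $\Phi$. The rest is bookkeeping on top of Theorem~\ref{thm:phase}.
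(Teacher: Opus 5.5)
Your proposal is correct and follows essentially the same route as the paper: substitute the exact phase law into $B_N=1/(2U_N-1)$, bound the remainder $1/\bigl(X(X-1)\bigr)$ uniformly in the phase via $\phimin>0$, and read the cluster interval off the phase. The only differences are cosmetic. The paper gets $X-1\ge X/2$ for every $N$ directly from $U_N\ge1$, which you already had in hand, rather than restricting to large $N$. It also reruns the density-and-continuity argument instead of citing the cluster set of $N^ps_N$ and halving.
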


\begin{proof}
Substituting \eqref{eq:exact-phase} into the definition of $B_N$ gives \eqref{eq:B-exact}.  Since $\Phi(\theta_N)\ge\phimin>0$,
\begin{align*}
    B_N-\frac{N^{-p}}{2\Phi(\theta_N)}
    =\frac1{\bigl(2N^p\Phi(\theta_N)-1\bigr)
        2N^p\Phi(\theta_N)}.
\end{align*}
Moreover, $N^p\Phi(\theta_N)=U_N\ge1$, so $2N^p\Phi(\theta_N)-1\ge N^p\Phi(\theta_N)$. The displayed remainder is therefore bounded above by $N^{-2p}/(2\phimin^2)$, proving \eqref{eq:B-asymptotic}. After multiplication by $N^p$,
\begin{align*}
    N^pB_N=\frac1{2\Phi(\theta_N)}+O(N^{-p}).
\end{align*}
Density of the integer phases, continuity of $\Phi$, and \eqref{eq:phase-extrema} now yield
\begin{align*}
    \Clust(N^pB_N)
    =\left\{\frac1{2\Phi(t)}:0\le t\le1\right\}
    =\left[\frac12,\frac1{2\phimin}\right].
\end{align*}
\end{proof}

Thus the tight coefficient has decay exponent $p=\log_2(1+\sqrt2)>1$, but its leading multiplier is the nonconstant log-periodic function $1/(2\Phi)$. This oscillation is a leading-order effect and cannot be absorbed into the $O(N^{-2p})$ remainder.

\begin{remark}
    The matching instances make $B_N$ tight for each corresponding primitive and OBS-S schedule. Since $B_N$ decreases strictly with $U_N$, balanced scalar optimization also makes this coefficient best within the recursively generated primitive/OBS-S composition classes. These statements do not assert exact finite-horizon minimax optimality over all predetermined schedules. Objective-specific constructions such as OBS-F obey a different asymmetric recursion and are treated separately in the next section.
\end{remark}
\section{The dyadic phase of the asymmetric OBS-F recursion}\label{sec:obsf}

The preceding sections solve the symmetric primitive/OBS-S Bellman problem. We now turn to the objective-specific optimized basic f-composable schedule (OBS-F). Its recursion attaches an OBS-S block to an OBS-F continuation, so the state propagates along an ordered right spine and the optimizing pivot need not be balanced. The rearrangement theorem therefore no longer applies. Nevertheless, the exact symmetric sequence $U_N$ still controls the asymmetric recursion strongly enough to produce a unique, Lipschitz, and nonconstant dyadic phase. The argument has three stages: a reciprocal comparison with $U_N$, a compactness construction of the phase, and a sharp support analysis that proves strict separation and nonconstancy.

We first recall the certificate interface that gives the OBS-F scalar its optimization meaning.

\begin{definition}[Definition~1 in \citet{GrimmerShuWangMOR}]\label{def:fcomposable}
A positive $n$-step schedule $h=(h_0,\ldots,h_{n-1})^\top$ is \emph{f-composable with rate $\eta>0$} if, for every $f\in\F_1$, every minimizer $x_*$, and every associated gradient-descent trajectory,
\begin{align*}
    f(x_n)-f_*
    \le\frac\eta2\norm{x_0-x_*}^2,
\end{align*}
and its rate satisfies the sum--product identities
\begin{align}\label{eq:f-rate-identity}
    \eta
    =\frac1{1+2\sum_{i=0}^{n-1}h_i}
    =\prod_{i=0}^{n-1}(h_i-1)^2.
\end{align}
The empty schedule is f-composable with rate one.
\end{definition}

The identities in \eqref{eq:f-rate-identity} balance the one-dimensional quadratic and Huber witnesses, both of which attain the bound with equality \citep[Definition~1 and Lemma~1]{GrimmerShuWangMOR}. Thus the rate is the exact worst-case objective coefficient for an f-composable schedule, not merely an auxiliary Bellman variable. For a general $L$-smooth convex objective, applying the dimensionless schedule through $x_{i+1}=x_i-(h_i/L)\nabla f(x_i)$ restores a factor $L$ in Definition \ref{def:fcomposable}.

\subsection{Reciprocal Bellman structure and symmetric comparison}\label{subsec:obsf-bellman}

The f-join theorem of \citet[Theorem~1]{GrimmerShuWangMOR} combines an s-composable prefix of rate $\alpha$ with an f-composable continuation of rate $\beta$ into an f-composable schedule of rate
\begin{align*}
    \mathcal T(\alpha,\beta)
    \coloneqq
    \frac{2\alpha\beta}
    {\alpha+4\beta+\sqrt{\alpha^2+8\alpha\beta}}.
\end{align*}
Starting from the empty schedule and iterating these joins generates the basic f-composable family. For $N\ge1$, let $h^{\mathrm{OBS\text{-}F}}(N-1)$ be an $(N-1)$-step member with minimum rate, and write the rate $\eta^{\mathrm{OBS\text{-}F}}(N-1)$ as $\eta^{\mathrm{F}}_N$.
The principle of optimality gives the OBS-F dynamic program
\begin{align}\label{eq:obsf-dp}
    \eta^{\mathrm{F}}_1=1,
    \quad
    \eta^{\mathrm{F}}_N=\min_{1\le m<N}\mathcal T(s_m,\eta^{\mathrm{F}}_{N-m}),
\end{align}
where a minimizing split selects an OBS-S prefix $h^{\mathrm{OBS\text{-}S}}(m-1)$ and an OBS-F continuation $h^{\mathrm{OBS\text{-}F}}(N-m-1)$. By Definition \ref{def:fcomposable}, after restoring a general smoothness constant $L$, this schedule satisfies the tight guarantee
\begin{align}\label{eq:obsf-objective-bound}
    f(x_{N-1})-f_*
    \le \eta^{\mathrm{F}}_N\frac{L}{2}\norm{x_0-x_*}^2.
\end{align}
Hence all subsequent asymptotic statements about $\eta^{\mathrm{F}}_N$ are statements about the exact worst-case coefficient in \eqref{eq:obsf-objective-bound}. In particular, minimizing \eqref{eq:obsf-dp} directly optimizes the certified final objective gap within the basic f-composable class.

Their analysis also gives, for an explicit $c_{\mathrm{low}}>0$,
\begin{align}
    \eta^{\mathrm{F}}_{2N}\le\rho^{-1}\eta^{\mathrm{F}}_N, \quad \eta^{\mathrm{F}}_N\ge c_{\mathrm{low}}N^{-p}.
    \label{eq:obsf-imported}
\end{align}
These are the only OBS-F asymptotic estimates imported from that work; see \citet[Lemma~10 and Theorem~5]{GrimmerShuWangMOR}.

Reciprocating \eqref{eq:obsf-dp} exposes the Bellman structure. Write $W_N\coloneqq (\eta^{\mathrm{F}}_N)^{-1}$, then 
\begin{align}\label{eq:obsf-reciprocal-dp}
    W_1=1,
    \quad
    W_N=\max_{1\le m<N}\mathcal A(U_m,W_{N-m}),
\end{align}
where
\begin{align*}
    \mathcal A(u,w)
    \coloneqq\frac{4u+w+\sqrt{w^2+8uw}}{2}.
\end{align*}
The map $\mathcal A$ is continuous, increasing, and positively homogeneous, but asymmetric in its two arguments: $U_m$ represents the OBS-S prefix, whereas $W_{N-m}$ represents the OBS-F continuation. Thus \eqref{eq:obsf-reciprocal-dp} is an ordered Bellman recursion, and the balanced rearrangement argument of \cref{sec:balanced} does not apply.

Before comparing the two recursions, note a simple consequence of the imported estimates. For each fixed $m\ge1$, the sequence $\{(m2^k)^p\eta^{\mathrm{F}}_{m2^k}\}_k$ is nonincreasing and bounded below by \eqref{eq:obsf-imported}, and hence converges to a positive limit, which is denoted by $C_m^F$. Shifting the ray index gives $C_{2m}^F=C_m^F$. We use these limits to identify the phase below.

\begin{proposition} \label{thm:obsf-sandwich}
For every $N\ge1$,
\begin{align}\label{eq:obsf-sandwich}
    2U_N-1\le W_N\le\rho U_N.
\end{align}
Consequently, if $t=\{\log_2m\}$, then
\begin{align*}
    \frac{1}{\rho\Phi(t)}
    \le C_m^F
    \le\frac{1}{2\Phi(t)}.
\end{align*}
\end{proposition}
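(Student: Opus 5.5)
The plan is to prove both inequalities in \eqref{eq:obsf-sandwich} by strong induction on $N$ using the reciprocal Bellman recursion \eqref{eq:obsf-reciprocal-dp}, and to reduce each inductive step to a single scalar inequality between $\mathcal A$ and $\Kop$. The base case is $W_1=1$ and $U_1=1$, which gives $2U_1-1=1\le W_1\le\rho=\rho U_1$. The limits $C_m^F$ then follow by evaluating along dyadic rays with the exact phase law \eqref{eq:exact-phase}.

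\emph{Upper bound.} For $N\ge2$, let $m$ be any maximizing pivot in \eqref{eq:obsf-reciprocal-dp}. The induction hypothesis gives $W_{N-m}\le\rho U_{N-m}$, and monotonicity of $\mathcal A$ then gives $W_N\le\mathcal A(U_m,\rho U_{N-m})$. I will prove the scalar comparison
\begin{align*}
    \mathcal A(u,\rho v)\le\rho\,\Kop(u,v),\quad u,v\ge0.
\end{align*}
Combined with supercomposition (Corollary \ref{cor:balanced-optimality}), $\rho\Kop(U_m,U_{N-m})\le\rho U_N$, this closes the induction. By positive homogeneity the comparison reduces to one variable, say $v=1$ and $u\ge0$. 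The endpoint $u=0$ gives equality, since $\mathcal A(0,\rho)=\rho=\rho\Kop(0,1)$. For $u=1$ there is a strict numerical gap ($\approx5.71$ versus $\rho^2\approx5.83$). For a proof, I will isolate the two square roots, square twice, and check the sign of the resulting polynomial in $u$ using $\rho^2=2\rho+1$.

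\emph{Lower bound.} Let $a=\floor{N/2}$ and $b=\ceil{N/2}$, so that $U_N=\Kop(U_a,U_b)$ by \eqref{eq:balanced-recurrence}. Taking the pivot $m=a$ and applying the induction hypothesis $W_b\ge2U_b-1$ gives $W_N\ge\mathcal A(U_a,W_b)\ge\mathcal A(U_a,2U_b-1)$. It therefore suffices to prove
\begin{align*}
    \mathcal A(u,2v-1)\ge2\Kop(u,v)-1,\quad u,v\ge1.
\end{align*}
At $u=v=1$ this reads $4\ge2\rho-1$, which holds. This inequality is inhomogeneous, so the reduction is less clean than for the upper bound. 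My first attempt will be the substitution $w=2v-1$, writing it as $\mathcal A(u,w)\ge2\Kop(u,(w+1)/2)-1$. I will then differentiate in $w$ (or in $u$) to show that the difference is monotone on $[1,\infty)^2$, and separately check the boundary $u=1$ or $v=1$ by squaring. If the orientation $(a,b)$ turns out to be the worse one, I will use $m=b$ instead; both are available as pivots.

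\emph{Consequences and main obstacle.} Fix $m$, set $N=m2^k$, and note that $\{\log_2(m2^k)\}=t$ for every $k$. By \eqref{eq:exact-phase}, $U_N=N^p\Phi(t)$. The definition gives $N^p\eta^{\mathrm F}_N=N^p/W_N$, and \eqref{eq:obsf-sandwich} yields
\begin{align*}
    \frac{1}{\rho\Phi(t)}\le\frac{N^p}{W_N}\le\frac{N^p}{2N^p\Phi(t)-1}.
\end{align*}
Letting $k\to\infty$ gives the stated bounds on $C_m^F$. I expect the main obstacle to be the two scalar inequalities, especially the inhomogeneous lower-bound inequality, where the constant $-1$ prevents a direct reduction by homogeneity.
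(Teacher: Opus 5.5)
Your proposal follows the paper's proof: the same strong induction on \eqref{eq:obsf-reciprocal-dp}, the same balanced pivot for the lower bound, and the same passage to $C_m^F$ along $N=m2^k$ via \eqref{eq:exact-phase}. Your two scalar comparisons are exactly Lemma~\ref{lem:obsf-local-comparisons}. For the upper bound you do not need Corollary~\ref{cor:balanced-optimality}: $\Kop(U_m,U_{N-m})\le U_N$ is just the definition \eqref{eq:U-DP}.

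The unfinished part is the proof of the two scalar comparisons, and your plan for the lower one partly fails. Put $D(u,v)=\mathcal A(u,2v-1)-2\Kop(u,v)+1$. This function is not monotone in $v$, hence not in $w$, on $[1,\infty)^2$. Indeed $D(1,1)=5-2\rho\approx0.17$, $D(1,2)\approx0.25$, $D(1,4)\approx0.22$, and $D(1,v)\to0$ as $v\to\infty$. The alternative of monotonicity in $u$ would need its own two-radical inequality for $\partial_uD$, which is no easier than the original.

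The paper uses root location instead. $\mathcal A(x,w)$ is the larger root of $z^2-(4x+w)z+4x^2$, and the smaller root is at most $2x$ because the roots multiply to $4x^2$. Hence for any $z_0>2x$, the sign of this quadratic at $z_0$ decides how $z_0$ compares with $\mathcal A(x,w)$, so each comparison needs only one squaring. Let $k=\Kop(x,y)$ and reduce with $k^2=(x+y)k+xy$.

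\begin{itemize}
\item At $z_0=2k-1$, the claim reduces to $(2x+1)\sqrt{x^2+6xy+y^2}\ge2x^2+2xy+3x+y$. The difference of squares is $8x^2(2xy-x+y-1)\ge0$.
\item At $z_0=\rho k$, use $\rho^2=2\rho+1$ and set $\theta=x/y$. The claim reduces to $(7-2\sqrt2)\theta+5+2\sqrt2\ge(1+2\sqrt2)\sqrt{\theta^2+6\theta+1}$. The difference of squares is $8(6-4\sqrt2)\bigl(\theta-2-\tfrac{3\sqrt2}{2}\bigr)^2$.
\end{itemize}

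So the upper comparison is an equality at $x/y=2+\tfrac{3\sqrt2}{2}$. Your double-squaring must therefore produce a polynomial with a double root there. The strict gap you observed at $u=v$ does not reflect the worst case.
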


\begin{proof}
The local comparisons established in Lemma \ref{lem:obsf-local-comparisons} are
\begin{align*}
    \mathcal A(x,\rho y)&\le\rho\Kop(x,y),
    \quad x,y>0,\\
    \mathcal A(x,2y-1)&\ge2\Kop(x,y)-1,
    \quad x,y\ge1.
\end{align*}
Assume \eqref{eq:obsf-sandwich} below $N$. For every split,
\begin{align*}
    \mathcal A(U_m,W_{N-m})
    \le\mathcal A(U_m,\rho U_{N-m})
    \le\rho\Kop(U_m,U_{N-m}).
\end{align*}
Maximizing and using \eqref{eq:U-DP} gives $W_N\le\rho U_N$. For the lower bound, choose a balanced optimal split $N=a+b$. Since $U_N=\Kop(U_a,U_b)$,
\begin{align*}
    W_N
    \ge\mathcal A(U_a,W_b)
    \ge\mathcal A(U_a,2U_b-1)
    \ge2\Kop(U_a,U_b)-1
    =2U_N-1.
\end{align*}
The base case is immediate. Applying the sandwich along $N=m2^k$, dividing by $(m2^k)^p$, and using the exact phase law for $U_N$ gives the stated bounds on $C_m^F$.
\end{proof}

\subsection{Construction of the OBS-F phase}\label{subsec:obsf-phase-construction}

The fixed-ray limits do not by themselves define a continuous phase. One also needs uniform control of neighboring normalized values. Define
\begin{align*}
    D_N\coloneqq\frac{W_N}{N^p},
    \quad
    C_N\coloneqq N^p\eta^{\mathrm{F}}_N=\frac1{D_N},
    \quad
    \mathbb T\coloneqq\R/\mathbb Z,
    \quad
    \phi_*\coloneqq\min_{t\in\mathbb T}\Phi(t)>0.
\end{align*}
Let $L_F$ be a Lipschitz constant for the canonical interpolation $F$ and set $L_U\coloneqq\max\{1,L_F\}$. The discrete estimates proved in Lemma \ref{lem:obsf-discrete-modulus} give
\begin{align*}
    0<U_m-U_{m-1}\le L_Um^{p-1},\quad
    0<W_{N+1}-W_N\le4L_UN^{p-1},\quad
    |C_{N+1}-C_N|\le\frac{L_{\mathrm{obs}}}{N},
\end{align*}
where $L_{\mathrm{obs}}\coloneqq(4L_U+\rho p)/\phi_*^2$. Since adjacent logarithmic phases are separated by $\log_2(1+1/N)\asymp N^{-1}$, the preceding estimate yields a uniform bound on the slopes of the phase interpolants.


For each $k\ge0$ and $0\le j\le2^k$, define
\begin{align*}
    t_{k,j}\coloneqq\log_2\frac{2^k+j}{2^k}.
\end{align*}
Let $\Psi_k:[0,1]\to\R$ be the unique continuous function satisfying
\begin{align*}
    \Psi_k(t_{k,j})=C_{2^k+j},
    \quad 0\le j\le2^k,
\end{align*}
and affine on each interval $[t_{k,j},t_{k,j+1}]$.

\begin{theorem} \label{thm:obsf-continuous-phase}
There exists a unique positive Lipschitz function $\Psi_F:\mathbb T\to(0,\infty)$ such that the interpolation functions $\Psi_k$ converge uniformly to $\Psi_F$ on $[0,1]$:
\begin{align}\label{eq:obsf-uniform-phase}
    \norm{\Psi_k-\Psi_F}_{L^\infty([0,1])}
    \to0.
\end{align}
Equivalently, because the maximal interpolation mesh tends to zero and $\Psi_F$ is Lipschitz,
\begin{align*}
    \sup_{2^k\le N\le2^{k+1}}
    \left|N^p\eta^{\mathrm{F}}_N-\Psi_F(\{\log_2N\})\right|
    \to0.
\end{align*}
Thus
\begin{align*}
    \eta^{\mathrm{F}}_N=N^{-p}\Psi_F(\{\log_2N\})+o(N^{-p})
\end{align*}
as $N \to \infty$. 
Moreover,
\begin{align}\label{eq:obsf-cluster-set}
    \Clust(N^p\eta^{\mathrm{F}}_N)
    =\Psi_F(\mathbb T)
    =[\min\Psi_F,\max\Psi_F].
\end{align}
Thus \eqref{eq:obsf-cluster-set} is the complete cluster interval of the normalized tight objective coefficient in \eqref{eq:obsf-objective-bound}. Equivalently, the optimized OBS-F schedules satisfy
\begin{align}\label{eq:obsf-objective-phase}
    f(x_{N-1})-f_*
    \le
    \left[N^{-p}\Psi_F(\{\log_2N\})+o(N^{-p})\right]
    \frac{L}{2}\norm{x_0-x_*}^2
\end{align}
where the $o(N^{-p})$ remainder has the same dyadic-block uniformity. One may take $2L_{\mathrm{obs}}\log 2$ as a Lipschitz constant for $\Psi_F$.
\end{theorem}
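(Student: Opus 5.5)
The plan is to build $\Psi_F$ by an Arzelà--Ascoli argument, fix it uniquely through the fixed-ray limits $C_m^F$, and then read off the remaining assertions. Everything rests on the discrete modulus $|C_{N+1}-C_N|\le L_{\mathrm{obs}}/N$ from Lemma \ref{lem:obsf-discrete-modulus}. The sandwich of Proposition \ref{thm:obsf-sandwich} supplies uniform two-sided bounds.

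\textbf{Step 1 (equi-Lipschitz, equibounded).} On $[t_{k,j},t_{k,j+1}]$ the slope of $\Psi_k$ is $(C_{N+1}-C_N)/\log_2(1+1/N)$ with $N=2^k+j\in[2^k,2^{k+1})$. Since $\log_2(1+1/N)\ge 1/(2N\log 2)$ for $N\ge1$, each slope is at most $2L_{\mathrm{obs}}\log 2$ in absolute value. The sandwich gives $1/(\rho U_N)\le\eta^{\mathrm F}_N\le 1/(2U_N-1)$. Together with $U_N=N^p\Phi$ and $\phimin\le\Phi\le1$, this yields $1/\rho\le C_N\le 1/\phimin$ up to an $O(N^{-p})$ correction, so the family $\{\Psi_k\}$ is uniformly bounded.

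\textbf{Step 2 (uniqueness of the limit).} By Arzelà--Ascoli, every subsequence of $\{\Psi_k\}$ has a uniformly convergent sub-subsequence, and each limit is $2L_{\mathrm{obs}}\log2$-Lipschitz. Fix a dyadic phase $t=\log_2(m/2^{k_0})$ with $2^{k_0}\le m<2^{k_0+1}$. For every $k\ge k_0$ this point is a node of level $k$, namely $t=t_{k,j}$ with $2^k+j=m2^{k-k_0}$. Hence $\Psi_k(t)=C_{m2^{k-k_0}}\to C_m^F$. Every subsequential limit therefore equals $C^F_m$ at $t=\log_2(m/2^{k_0})$. These points are dense in $[0,1]$, so all limits coincide, and the whole sequence converges uniformly to a single function $\Psi_F$.

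\textbf{Step 3 (periodicity and positivity).} At the endpoints, $\Psi_F(0)=\lim_k C_{2^k}=C_1^F$ and $\Psi_F(1)=\lim_k C_{2^{k+1}}=C_1^F$. Thus $\Psi_F$ descends to $\mathbb T$. Positivity follows from the lower bound in Proposition \ref{thm:obsf-sandwich}, which gives $\Psi_F\ge 1/\rho$.

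\textbf{Step 4 (consequences).}
\begin{itemize}
\item \emph{Block-uniform estimate.} For $2^k\le N\le 2^{k+1}$, the phase $\{\log_2 N\}$ is itself a node $t_{k,N-2^k}$, apart from $N=2^{k+1}$, where phase $0$ is identified with $1$. Hence $C_N=\Psi_k(\{\log_2N\})$, and the supremum is bounded by $\|\Psi_k-\Psi_F\|_\infty\to0$. The $o(N^{-p})$ expansion and the objective bound \eqref{eq:obsf-objective-phase} follow by multiplying by $N^{-p}$ and using \eqref{eq:obsf-objective-bound}.
\item \emph{Cluster set.} Phases $\{\log_2N\}$ are dense, so uniform convergence plus continuity gives $\Clust=\Psi_F(\mathbb T)$. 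This set is a compact interval because $\mathbb T$ is connected.
\item \emph{Uniqueness of $\Psi_F$.} Any continuous function with the stated uniform limit must agree with $\Psi_F$ on the dense set of dyadic phases.
\end{itemize}

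The main obstacle is the discrete modulus itself. It is already supplied by Lemma \ref{lem:obsf-discrete-modulus}, which controls $W_{N+1}-W_N$ through the increments of $U$ across the Bellman maximum, so the remaining argument is routine compactness.
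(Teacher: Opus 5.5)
Your proposal is correct and follows essentially the same route as the paper: the discrete modulus gives equi-Lipschitz interpolants and the reciprocal sandwich gives uniform bounds. Arzel\`a--Ascoli plus identification of every subsequential limit on dyadic mantissas through the fixed-ray limits $C_m^F$ yields uniform convergence, $C_{2m}^F=C_m^F$ gives periodicity, and density of the phases $\{\log_2N\}$ combined with compactness of $\mathbb T$ gives the cluster interval.
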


\begin{proof}
For adjacent interpolation nodes,
\begin{align*}
    \log_2\frac{N+1}{N}
    =\frac{\log(1+1/N)}{\log 2}
    \ge\frac{1}{(N+1)\log 2}.
\end{align*}
The discrete modulus therefore makes $\{\Psi_k\}$ equi-Lipschitz. The reciprocal sandwich gives common positive upper and lower bounds, so Arzel\`a--Ascoli yields subsequences converging uniformly on $[0,1]$.

Every subsequential limit has the same values on dyadic mantissas. Indeed, if $x=j/2^\ell\in[1,2]$ and $k\ge\ell$, then
\begin{align*}
    \Psi_k(\log_2x)=C_{j2^{k-\ell}},
\end{align*}
which converges to the fixed-ray limit $C_j^F$. Hence all uniformly convergent subsequences have limits agreeing on a dense set and therefore, by continuity, everywhere. Since the family is relatively compact and has only one possible limit, the full function sequence $\{\Psi_k\}$ converges uniformly on $[0,1]$. The identity $C_{2m}^F=C_m^F$ matches the endpoint values and gives a function on $\mathbb T$, proving \eqref{eq:obsf-uniform-phase} and uniqueness. Thus the uniformity here concerns the function sequence $\Psi_k$ over the phase variable, rather than an additional mode of convergence for a single scalar sequence.

Compactness of the phase circle shows that every cluster point of $C_N$ lies in $\Psi_F(\mathbb T)$. Conversely, $N_k=\floor{2^{k+t}}$ approaches any prescribed phase $t$, so $C_{N_k}\to\Psi_F(t)$. Since the continuous image of the circle is a compact interval, \eqref{eq:obsf-cluster-set} follows.
\end{proof}

\subsection{Strict support separation and the complete phase law}\label{subsec:obsf-strict-phase}

The compactness construction does not rule out a constant phase. To identify the obstruction, we use the sharp homogeneous support of the join map.

\begin{lemma} \label{lem:obsf-support}
Let $\xi_*\in(0,1)$ be the unique interior solution of
\begin{align}\label{eq:obsf-xi-equation}
    \xi^{2q-1}(2-\xi)=1,
\end{align}
and define
\begin{align*}
    \tau_*\coloneqq\frac{1-\xi_*}{2\xi_*^2},
    \quad
    B_{\mathrm{sup}}\coloneqq2^q\frac{1-\xi_*^{2q}}{(1-\xi_*)^q}.
\end{align*}
Then $B_{\mathrm{sup}}>2^q$ and, for all $u,w\ge0$,
\begin{align}\label{eq:obsf-support}
    \mathcal A(u,w)^q\le B_{\mathrm{sup}}u^q+w^q.
\end{align}
For $u,w>0$, equality holds if and only if $u/w=\tau_*$. Equality also holds when $u=0$, whereas the inequality is strict when $w=0<u$.
\end{lemma}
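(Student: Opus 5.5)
By positive homogeneity of $\mathcal A$, the inequality \eqref{eq:obsf-support} reduces to a one-variable problem. I will first dispose of the boundary cases. If $u=0$, then $\mathcal A(0,w)=(w+w)/2=w$, so equality holds. If $w=0<u$, then $\mathcal A(u,0)=2u$, and strictness reduces to $2^q<B_{\mathrm{sup}}$.

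For $u,w>0$, set $w=1$ and $u=\tau>0$. It then suffices to show
\begin{align*}
    g(\tau)\coloneqq B_{\mathrm{sup}}\tau^q+1-\mathcal A(\tau,1)^q\ge0,
\end{align*}
with equality only at $\tau=\tau_*$. Equivalently, $B_{\mathrm{sup}}=\sup_{\tau>0}\bigl(\mathcal A(\tau,1)^q-1\bigr)/\tau^q$, and the supremum must be attained uniquely at $\tau_*$.

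To make the algebra tractable, I will parametrize by the value of the inner radical. Write $\sqrt{1+8\tau}=(2-\xi)/\xi$, or some equivalent rational substitution with $\xi\in(0,1)$. Solving $\mathcal A$'s radical equation shows that $\tau$ is a rational function of $\xi$ and that $\mathcal A(\tau,1)$ becomes rational in $\xi$. I expect the natural choice to be the one making $\tau=(1-\xi)/(2\xi^2)$ and $\mathcal A=1/\xi^2$. A check confirms it: with $w=1$ and $u=(1-\xi)/(2\xi^2)$, we get $1+8u=(4-4\xi+\xi^2)/\xi^2$, so $\sqrt{1+8u}=(2-\xi)/\xi$. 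Then $\mathcal A=(4u+1+(2-\xi)/\xi)/2=\bigl(2(1-\xi)/\xi^2+2/\xi\bigr)/2=1/\xi^2$. The map $\xi\mapsto\tau$ is a decreasing bijection of $(0,1)$ onto $(0,\infty)$. The ratio to maximize is therefore
\begin{align*}
    h(\xi)\coloneqq\frac{\xi^{-2q}-1}{\bigl((1-\xi)/(2\xi^2)\bigr)^q}=2^q\,\frac{1-\xi^{2q}}{(1-\xi)^q},
\end{align*}
which matches the formula for $B_{\mathrm{sup}}$ exactly.

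It remains to show that $h$ has a unique interior maximizer, namely the solution of \eqref{eq:obsf-xi-equation}. Differentiating $\log h$ gives
\begin{align*}
    -\frac{2q\xi^{2q-1}}{1-\xi^{2q}}+\frac{q}{1-\xi}=0,
\end{align*}
which simplifies to $1-\xi^{2q}=2\xi^{2q-1}(1-\xi)$, that is, $\xi^{2q-1}(2-\xi)=1$.

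The main obstacle is the uniqueness of this critical point together with its being a maximum. Set $\psi(\xi)=\xi^{2q-1}(2-\xi)$. We have $\psi(0)=0$ since $2q-1>0$, because $q=1/p\approx0.786>1/2$. We also have $\psi(1)=1$. Moreover, $\psi'(\xi)=\xi^{2q-2}\bigl((2q-1)(2-\xi)-\xi\bigr)$, so $\psi$ increases up to $\xi_0=(2q-1)/q<1$ and decreases afterward. Hence $\psi>1$ just left of $1$, and the equation $\psi=1$ has exactly one root in $(0,\xi_0)$. The sign of $(\log h)'$ equals the sign of $1-\psi$ times a positive factor. So $h$ increases on $(0,\xi_*)$ and decreases on $(\xi_*,1)$, giving a unique maximizer $\xi_*$.

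Finally, $B_{\mathrm{sup}}>2^q$ follows from the boundary limits. As $\xi\to0$ we have $h\to2^q$, which corresponds to $\tau\to\infty$, the $w=0$ direction. Since $h$ strictly increases initially, its maximum exceeds $2^q$. This establishes \eqref{eq:obsf-support} with equality exactly when $u/w=\tau_*$.
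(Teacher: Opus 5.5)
Your proposal is correct and follows the paper's proof essentially step for step. The paper also normalizes $w=1$ and substitutes $x=1/z$ with $z=(1+\sqrt{1+8t})/2$, which is exactly your parametrization $\tau=(1-\xi)/(2\xi^2)$, $\mathcal A(\tau,1)=1/\xi^2$; it then maximizes the same $h(\xi)=2^q(1-\xi^{2q})/(1-\xi)^q$, uses the unimodality of $\xi^{2q-1}(2-\xi)$ to get the unique maximizer, uses the limit $h\to2^q$ as $\xi\to 0$ to get $B_{\mathrm{sup}}>2^q$, and treats the boundary cases through $\mathcal A(0,w)=w$ and $\mathcal A(u,0)=2u$.
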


The proof is the one-variable maximization in Appendix \ref{app:obsf-support-proof}. Set
\begin{align*}
    d_*\coloneqq B_{\mathrm{sup}}^p,
    \quad
    c_*\coloneqq d_*^{-1}=B_{\mathrm{sup}}^{-p},
    \quad
    r_*\coloneqq\tau_*^q,
    \quad
    \alpha_*\coloneqq\frac{B_{\mathrm{sup}}r_*}{1+B_{\mathrm{sup}}r_*}.
\end{align*}
Numerically, $B_{\mathrm{sup}}\approx1.97530507031103$ and $c_*\approx0.420809352823451$. The support inequality \eqref{eq:obsf-support} recovers the lower barrier $C_N\ge c_*=c_{\mathrm{low}}$ established by \citet[Theorem~5]{GrimmerShuWangMOR}. Below we will establish a strict separation $c_*<\min_{t\in\mathbb T}\Psi_F(t)$, obtained from the deficit and phase-rigidity arguments. The exact deficit identity in Proposition \ref{prop:obsf-deficit} then shows what asymptotic contact with this barrier would require: along an optimizing right spine, every macroscopic pivot must simultaneously saturate the symmetric power bound and the support inequality. The latter selects the unique fraction $\alpha_*$, while the former forces a zero symmetric phase. The technical concentration and phase-circle arguments implementing this rigidity are collected in Appendix \ref{app:obsf-deficit-concentration} and Appendix \ref{app:obsf-global-separation}.

\begin{theorem} \label{cor:obsf-complete-phase}
The phase $\Psi_F$ is nonconstant and satisfies
\begin{align}\label{eq:obsf-final-enclosure}
    c_*<\min_{t\in\mathbb T}\Psi_F(t)
    <\max_{t\in\mathbb T}\Psi_F(t)
    \le\frac{c_*}{\phi_*}.
\end{align}
Consequently, the cluster interval in \eqref{eq:obsf-cluster-set} is nondegenerate and lies strictly above the sharp homogeneous support barrier. By \eqref{eq:obsf-objective-phase}, this is precisely the nondegenerate interval of asymptotic leading constants in the tight OBS-F objective-gap guarantee.
\end{theorem}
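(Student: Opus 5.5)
The plan is to prove the three inequalities in \eqref{eq:obsf-final-enclosure} separately. Each is first reduced to a statement about a \emph{limiting homogeneous Bellman equation} for the phase. Write $\Delta(t)\coloneqq1/\Psi_F(t)$ for the limiting profile of $D_N=W_N/N^p$; by \cref{thm:obsf-continuous-phase}, $D_N=\Delta(\{\log_2N\})+o(1)$ uniformly. The first step is to pass to the limit in \eqref{eq:obsf-reciprocal-dp} along $N=\floor{2^{k+s}}$ with pivots $m\approx\alpha N$ for a fixed fraction $\alpha\in(0,1)$. Using the exact law $U_m=m^p\Phi(\{\log_2m\})$ from \cref{thm:phase}, the homogeneity and continuity of $\mathcal A$, and the uniform modulus of Lemma \ref{lem:obsf-discrete-modulus}, this should give
\begin{align*}
    \Delta(s)\ \ge\ \sup_{0<\alpha<1}\mathcal A\bigl(\alpha^p\Phi(s+\log_2\alpha),\,(1-\alpha)^p\Delta(s+\log_2(1-\alpha))\bigr).
\end{align*}
Here phases are read on $\mathbb T$. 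A matching upper bound, with pivots ranging over all fractions, follows from compactness.

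\textbf{Upper bound $\max\Psi_F\le c_*/\phi_*$.} Set $\delta\coloneqq\min\Delta$, and bound $\Phi\ge\phi_*$ and $\Delta\ge\delta$ inside the supremum. This gives
\begin{align*}
    \delta\ \ge\ \sup_\alpha\mathcal A\bigl(\phi_*\alpha^p,\delta(1-\alpha)^p\bigr).
\end{align*}
Suppose $\delta<\phi_*d_*$. By homogeneity, $\mathcal A(\phi_*\alpha^p,\delta(1-\alpha)^p)=\phi_*\,\mathcal A(\alpha^p,(\delta/\phi_*)(1-\alpha)^p)$. At $\delta/\phi_*=d_*$ the choice $\alpha=\alpha_*$ realizes the equality case $u/w=\tau_*$ of Lemma \ref{lem:obsf-support}. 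Since $d_*^q=B_{\sup}$, the support identity then gives the value $(B_{\sup}\alpha_*+B_{\sup}(1-\alpha_*))^{1/q}=d_*$. Because $\mathcal A(u,\cdot)$ has slope strictly below one relative to scaling when $u>0$ (i.e.\ $\mathcal A(u,\lambda w)>\lambda\mathcal A(u,w)$ for $\lambda<1$), the map $x\mapsto\sup_\alpha\mathcal A(\alpha^p,x(1-\alpha)^p)/x$ exceeds one for $x<d_*$. This contradicts the displayed inequality. Hence $\delta\ge\phi_*d_*$, i.e.\ $\max\Psi_F\le c_*/\phi_*$.

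\textbf{Strict lower bound $c_*<\min\Psi_F$.} Lemma \ref{lem:obsf-support} together with $U_m\le m^p$ (Proposition \ref{prop:sharp-growth}) gives $W_N^q\le B_{\sup}\sum(\text{prefix leaf counts})+1\le B_{\sup}N$ along any right spine, so $\Psi_F\ge c_*$. For strictness I would invoke the exact deficit identity of Proposition \ref{prop:obsf-deficit}. It expresses $d_*^q N-W_N^q$ as a sum of nonnegative defects along an optimizing spine, one symmetric-power defect ($m^p-U_m$) and one support defect per pivot. Assume $\Psi_F(t_0)=c_*$. Then along $N_k\to t_0$ the total normalized defect tends to zero. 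The concentration lemmas in Appendix \ref{app:obsf-deficit-concentration} then force every macroscopic pivot to take fraction $\to\alpha_*$ with prefix phase $\{\log_2m\}\to0$. Iterating down the spine, the successive continuations have leaf counts $\approx(1-\alpha_*)^jN$, and their prefixes $\alpha_*(1-\alpha_*)^jN$ must all be near dyadic. This requires $\log_2(1-\alpha_*)\in\mathbb Z$ together with a consistent initial phase. This phase-rigidity step is the main obstacle. I would attack it as in Appendix \ref{app:obsf-global-separation}: use the explicit algebraic value of $\alpha_*$ to show $\log_2(1-\alpha_*)\notin\mathbb Z$, and use the Lipschitz phase $\Phi$ to quantify a uniform positive defect over two consecutive pivots.

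\textbf{Nonconstancy.} Suppose $\Delta\equiv\delta$ were constant. In the limiting inequality, choose the phase $s$ so that $s+\log_2\alpha_*\in\mathbb Z$, where $\Phi=1$. Then
\begin{align*}
    \delta\ \ge\ \mathcal A\bigl(\alpha_*^p,\delta(1-\alpha_*)^p\bigr).
\end{align*}
The same slope argument as in the upper bound (now with $\phi_*$ replaced by $1$) yields $\delta\ge d_*$, i.e.\ $\Psi_F\le c_*$. This contradicts the strict lower bound just proved. Hence $\min\Psi_F<\max\Psi_F$, which completes \eqref{eq:obsf-final-enclosure}. The nondegeneracy of the cluster interval then follows from \eqref{eq:obsf-cluster-set}.
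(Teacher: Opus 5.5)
Your proposal is correct. For two of the three claims it is the paper's own argument. The upper enclosure is Proposition~\ref{prop:obsf-phase-enclosure}: you fix $\alpha=\alpha_*$ and use $\mathcal A(u,\lambda w)>\lambda\mathcal A(u,w)$ for $u>0$, $\lambda<1$, whereas the paper re-tunes $\alpha$ to the equality ratio at the current value; the two are equivalent. The strict separation is the two-pivot phase identity of Theorem~\ref{thm:obsf-global-strict}. It needs only two rounds of concentration, first on $N_k$ and then on the continuation $r_k$, because $E_{r_k}\le E_{N_k}=o(N_k)$ and $r_k\sim(1-\alpha_*)N_k$.

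Two small corrections there. First, $\alpha_*=1-\xi_*^{2q}$ has no explicit algebraic form, since $\xi_*$ solves $\xi^{2q-1}(2-\xi)=1$ with an irrational exponent. What excludes $\log_2(1-\alpha_*)\in\mathbb Z$ is the rational enclosure $1/4<1-\alpha_*<1/2$ of Lemma~\ref{lem:obsf-contact-fraction}. Second, no Lipschitz quantification of a two-pivot defect is needed, because the compactness argument closes qualitatively.

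The genuine difference is in nonconstancy. The paper combines the single-ray separation $C_1^F>c_*$ (Theorem~\ref{thm:obsf-dyadic-strict}) with Lemma~\ref{lem:obsf-constant-rigidity}. That lemma uses pivots of exactly dyadic size $2^{h+k}$, so $U$ meets its power bound without any appeal to continuity of $\Phi$, and it uses density of the fractions $2^h/(2^h+b)$ to reach every $\alpha$.

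You instead deduce nonconstancy from the global separation plus a one-fraction rigidity. Tuning the total phase so that the pivot $\lfloor\alpha_*N\rfloor$ has phase tending to $0$ gives $\delta\ge\mathcal A(\alpha_*^p,\delta(1-\alpha_*)^p)$, hence $\delta\ge d_*$. This is valid and more economical. It makes the dyadic-ray theorem redundant, since $\Psi_F(0)=C_1^F$ is a special case of global separation, and it replaces the density argument by a single evaluation. The paper's ordering, by contrast, establishes nonconstancy without the phase-circle argument, using only one macroscopic pivot on the ray $N=2^k$.

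Finally, the ``matching upper bound by compactness'' in your first step is never used and can simply be dropped; as stated it would need care for pivot fractions tending to $0$ or $1$.
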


\begin{proof}
The support and deficit decomposition are established in Proposition \ref{prop:obsf-deficit}. The dyadic-ray separation and constant-phase rigidity in Theorem \ref{thm:obsf-dyadic-strict} and Lemma \ref{lem:obsf-constant-rigidity} imply that $\Psi_F$ is nonconstant. The global strict lower bound and the upper phase enclosure are proved in Theorem \ref{thm:obsf-global-strict} and Proposition \ref{prop:obsf-phase-enclosure}. These statements give \eqref{eq:obsf-final-enclosure}, while \eqref{eq:obsf-cluster-set} identifies the complete cluster interval.
\end{proof}

Figure~\ref{fig:psi-phase} illustrates the asymmetric phase through a
finite-scale approximation, together with its theoretical support bounds.

\begin{figure}[ht]
  \centering
  \includegraphics[width=0.96\linewidth]{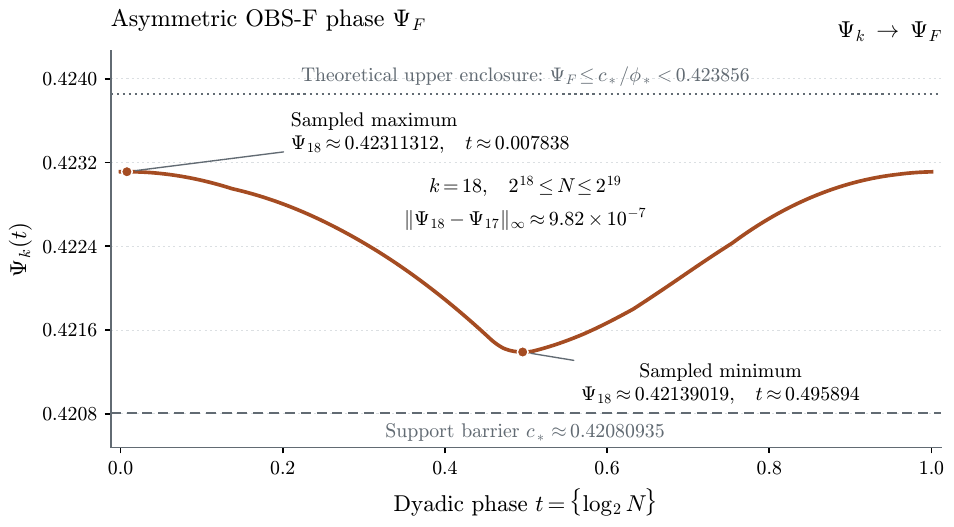}
  \caption{Numerical approximation of the asymmetric OBS-F phase
  $\Psi_F$ by the interpolant $\Psi_{18}$ from Section~\ref{subsec:obsf-phase-construction}.
  Its nodes are $N^p \eta^{\mathrm{F}}_N$ for $2^{18}\leq N\leq2^{19}$, computed
  by exhaustive optimization over all ordered splits in the Bellman recursion.
  The annotated extrema belong to this finite-scale interpolant.
  The dashed line is the support barrier
  $c_*\approx0.42080935$; the dotted line is the conservative upper enclosure
  $c_*/0.99281236$ for $\Psi_F$.
  The successive-scale difference is
  $\|\Psi_{18}-\Psi_{17}\|_\infty\approx9.82\times10^{-7}$;
  this is a numerical stability measure, not a certified error bound
  for the limiting profile.}
  \label{fig:psi-phase}
\end{figure}

The result gives a full phase law but not a closed form for $\Psi_F$. Its exact extrema, uniqueness of the extremizing phases, and the asymptotic geometry of maximizing pivots remain open quantitative problems. Nor does the analysis settle minimax optimality of OBS-F over arbitrary predetermined schedules.

Thus asymmetry destroys balanced optimality but not discrete-scale modulation: the exponent $p$ remains universal, while the leading constant is selected by a nonconstant dyadic phase.
\section{Conclusion}\label{sec:conclusion}

The common primitive/OBS-S scalar program is rigid. A sharp four-point rearrangement theorem forces balanced composition, and its equality cases determine every root tie and every optimal composition tree. This resolves a conjecture of Zhang and Jiang and yields a logarithmic-time evaluator for one horizon. The result is stronger than a preferred construction: it is a complete description of the optimizer set within the recursively generated symmetric class.

Exact dyadic scaling then turns the discrete optimizer into a nonlinear periodic phase rather than a single asymptotic constant. The canonical interpolation is bi-Lipschitz, has explicit pointwise derivative products and a strict dyadic corner, and yields marginal-gain laws, certified phase extrema, and the complete cluster interval of the tight primitive/OBS-S objective coefficient. Although asymmetry removes the balanced-split mechanism for OBS-F, it does not remove discrete-scale modulation: the normalized OBS-F rate converges uniformly over dyadic blocks to a unique positive Lipschitz nonconstant phase profile.

Recent lower bounds establish the Silver exponent $p$ as the optimal polynomial convergence exponent among all predetermined nonnegative stepsize schedules \citep{ye2026silverratealmostoptimal}. The remaining boundary is therefore finer than the exponent: exact finite-horizon minimax behavior beyond these certificate-generated classes, as well as the exact extrema and pivot geometry of the OBS-F phase, remains open.

\section*{Acknowledgement}

The authors acknowledge the use of AI tools throughout the research process, including manuscript preparation and proof development. The accompanying Lean~4 proof scripts were also developed with AI assistance and checked by Lean's kernel. The AI system employed is ChatGPT, supported by harness platforms Codex and Cursor. 

\bibliographystyle{plainnat} 
\bibliography{references}

@article{HwangJansonTsai2024,
  author  = {Hwang, Hsien-Kuei and Janson, Svante and Tsai, Tsung-Hsi},
  title   = {Identities and Periodic Oscillations of Divide-and-Conquer
             Recurrences Splitting at Half},
  journal = {Advances in Applied Mathematics},
  volume  = {155},
  pages   = {102653},
  year    = {2024},
  doi     = {10.1016/j.aam.2023.102653},
  url     = {https://doi.org/10.1016/j.aam.2023.102653}
}

@article{Okamura2016,
  author  = {Okamura, Kazuki},
  title   = {On Regularity for {de Rham}'s Functional Equations},
  journal = {Aequationes Mathematicae},
  volume  = {90},
  number  = {6},
  pages   = {1071--1085},
  year    = {2016},
  doi     = {10.1007/s00010-016-0439-6},
  url     = {https://doi.org/10.1007/s00010-016-0439-6}
}

@article{AltschulerParriloJACM,
  author    = {Altschuler, J. M. and Parrilo, P. A.},
  title     = {Acceleration by stepsize hedging: Multi-step descent and the {Silver} Stepsize Schedule},
  journal   = {Journal of the ACM},
  volume    = {72},
  number    = {2},
  pages     = {Article 12},
  year      = {2025},
  doi       = {10.1145/3708502}
}

@article{AltschulerParriloMP,
  author    = {Altschuler, J. M. and Parrilo, P. A.},
  title     = {Acceleration by stepsize hedging: {Silver} Stepsize Schedule for smooth convex optimization},
  journal   = {Mathematical Programming},
  volume    = {213},
  pages     = {1105--1118},
  year      = {2025},
  doi       = {10.1007/s10107-024-02164-2}
}

@inproceedings{BokAltschulerCOLT,
  author    = {Bok, J. and Altschuler, J. M.},
  title     = {Accelerating proximal gradient descent via {Silver} stepsizes},
  booktitle = {Proceedings of COLT 2025},
  series    = {Proceedings of Machine Learning Research},
  volume    = {291},
  pages     = {421--453},
  year      = {2025},
  publisher = {PMLR},
  url       = {https://proceedings.mlr.press/v291/bok25a.html}
}

@article{BokAltschulerMP,
  author    = {Bok, J. and Altschuler, J. M.},
  title     = {Optimized methods for composite optimization: A reduction perspective},
  journal   = {Mathematical Programming},
  year      = {2026},
  doi       = {10.1007/s10107-026-02377-7}
}

@article{DasGuptaVanParysRyu,
  author    = {Das Gupta, S. and Van Parys, B. P. G. and Ryu, E. K.},
  title     = {Branch-and-bound performance estimation programming: A unified methodology for constructing optimal optimization methods},
  journal   = {Mathematical Programming},
  volume    = {204},
  pages     = {567--639},
  year      = {2024},
  doi       = {10.1007/s10107-023-01973-1}
}

@article{DroriTeboulle,
  author    = {Drori, Y. and Teboulle, M.},
  title     = {Performance of first-order methods for smooth convex minimization: A novel approach},
  journal   = {Mathematical Programming},
  volume    = {145},
  pages     = {451--482},
  year      = {2014},
  doi       = {10.1007/s10107-013-0653-0}
}

@misc{ye2026silverratealmostoptimal,
      title={Silver Rate Is (Almost) Optimal for Gradient Descent Acceleration}, 
      author={Yuhan Ye and Kaizhao Liu},
      year={2026},
      eprint={2609.09152},
      archivePrefix={arXiv},
      primaryClass={math.OC},
      url={https://arxiv.org/abs/2609.09152}, 
}

@inproceedings{KornowskiShamir,
  author    = {Kornowski, G. and Shamir, O.},
  title     = {Open Problem: Anytime Convergence Rate of Gradient Descent},
  booktitle = {Proceedings of the 37th Conference on Learning Theory},
  series    = {Proceedings of Machine Learning Research},
  volume    = {247},
  pages     = {5335--5339},
  year      = {2024},
  publisher = {PMLR},
  url       = {https://proceedings.mlr.press/v247/kornowski24a.html}
}

@article{Nesterov1983,
  author  = {Nesterov, Y. E.},
  title   = {A method of solving a convex programming problem with convergence rate {$O(1/k^2)$}},
  journal = {Soviet Mathematics Doklady},
  volume  = {27},
  number  = {2},
  pages   = {372--376},
  year    = {1983}
}

@book{Nesterov2004,
  author    = {Nesterov, Yurii},
  title     = {Introductory Lectures on Convex Optimization:
               A Basic Course},
  publisher = {Kluwer Academic Publishers},
  address   = {Boston},
  series    = {Applied Optimization},
  volume    = {87},
  year      = {2004},
  doi       = {10.1007/978-1-4419-8853-9}
}

@article{Grimmer2024,
  author    = {Grimmer, B.},
  title     = {Provably faster gradient descent via long steps},
  journal   = {SIAM Journal on Optimization},
  volume    = {34},
  number    = {3},
  pages     = {2588--2608},
  year      = {2024},
  doi       = {10.1137/23M1588408}
}

@article{GrimmerShuWangIJO,
  author    = {Grimmer, B. and Shu, K. and Wang, A. L.},
  title     = {Accelerated objective gap and gradient norm convergence for gradient descent via long steps},
  journal   = {INFORMS Journal on Optimization},
  volume    = {7},
  pages     = {156--169},
  year      = {2025},
  doi       = {10.1287/ijoo.2024.0057}
}

@article{GrimmerShuWangMOR,
  author    = {Grimmer, B. and Shu, K. and Wang, A. L.},
  title     = {Composing optimized stepsize schedules for gradient descent},
  journal   = {Mathematics of Operations Research},
  year      = {2025},
  note      = {Published online 4 November 2025},
  doi       = {10.1287/moor.2024.0764}
}

@article{TaylorHendrickxGlineur,
  author    = {Taylor, A. B. and Hendrickx, J. M. and Glineur, F.},
  title     = {Smooth strongly convex interpolation and exact worst-case performance of first-order methods},
  journal   = {Mathematical Programming},
  volume    = {161},
  pages     = {307--345},
  year      = {2017},
  doi       = {10.1007/s10107-016-1009-3}
}

@article{TeboulleVaisbourd,
  author    = {Teboulle, M. and Vaisbourd, Y.},
  title     = {An elementary approach to tight worst case complexity analysis of gradient based methods},
  journal   = {Mathematical Programming},
  volume    = {201},
  pages     = {63--96},
  year      = {2023},
  doi       = {10.1007/s10107-022-01899-0}
}

@misc{TsaiEtAl,
  author    = {Tsai, C.-E. and Fatkhullin, I. and Zhang, L. and He, N.},
  title     = {Lower bounds for anytime acceleration of gradient descent},
  year      = {2026},
  eprint    = {2607.02053},
  archivePrefix = {arXiv},
  url       = {https://arxiv.org/abs/2607.02053}
}

@inproceedings{ZhangLeeDuChen,
  author    = {Zhang, Z. and Lee, J. D. and Du, S. S. and Chen, Y.},
  title     = {Anytime acceleration of gradient descent},
  booktitle = {Proceedings of COLT 2025},
  series    = {Proceedings of Machine Learning Research},
  volume    = {291},
  pages     = {5991--6013},
  year      = {2025},
  publisher = {PMLR},
  url       = {https://proceedings.mlr.press/v291/zhang25a.html}
}

@article{ZhangJiang,
  author    = {Zhang, Z. and Jiang, R.},
  title     = {Accelerated gradient descent by concatenation of stepsize schedules},
  journal   = {SIAM Journal on Optimization},
  volume    = {36},
  number    = {2},
  pages     = {1182--1210},
  year      = {2026},
  doi       = {10.1137/25M173898X}
}

\appendix
\small
\setlength{\abovedisplayskip}{5pt plus 2pt minus 2pt}
\setlength{\belowdisplayskip}{5pt plus 2pt minus 2pt}
\setlength{\abovedisplayshortskip}{1pt plus 2pt}
\setlength{\belowdisplayshortskip}{4pt plus 2pt minus 2pt}
\section{Algebraic details for the four-point rearrangement theorem}
\label{app:four-point-algebra}

This appendix supplies the coefficient calculations omitted from the main argument. The derivations are entirely algebraic and do not rely on numerical or computer-assisted sign verification.

\subsection{Sign beyond the inner roots}
\label{app:sign-beyond-inner-roots}

\begin{proof}[Proof of Lemma \ref{lem:sign-beyond-inner-roots}]
Write
\begin{align*}
    u&=r_+(a,b), \quad r_-(a,b)=-s,\\
    v&=r_+(c,d), \quad r_-(c,d)=-t,
\end{align*}
where $u,v,s,t\geq0$. Vieta's relations give
\begin{align*}
    u-s&=a+b, \quad us=ab,\\
    v-t&=c+d, \quad vt=cd.
\end{align*}
Consequently, $0\leq s\leq u$ and $0\leq t\leq v$. 

The $(+,+)$ factor in \eqref{eq:conjugate-polynomial} is
\begin{align*}
    q_{++}(z)=z^2-(u+v)z-uv.
\end{align*}
Its two roots are the nonnegative root $Z_{ab\mid cd}$ and a nonpositive root, so its sign for $z>0$ is $\operatorname{sgn}(z-Z_{ab\mid cd})$.  If $z>\max\{u,v\}$, the remaining three factors satisfy
\begin{align*}
    q_{+-}(z)&=z^2-(u-t)z+ut=z(z-u)+t(z+u)>0,\\
    q_{-+}(z)&=z^2-(-s+v)z+sv=z(z-v)+s(z+v)>0,\\
    q_{--}(z)&=z^2+(s+t)z-st\geq z^2-st\geq z^2-uv>0.
\end{align*}
Thus the sign of the product in \eqref{eq:conjugate-polynomial} is exactly the sign of its $(+,+)$ factor, which finishes the proof.
\end{proof}

\subsection{Pairing-difference identity}
\label{app:pairing-identities}

\begin{proof}[Proof of Lemma \ref{lem:pairing-differences}]
We first derive the conjugate polynomial for a generic pairing. Let
\begin{align*}
    A=a+b,\quad B=ab,\quad C=c+d,\quad D=cd,
\end{align*}
and let $u_\pm$ and $v_\pm$ be the respective roots of
\begin{align*}
    u^2-Au-B=0, \quad v^2-Cv-D=0.
\end{align*}
For fixed $u$, multiplication over the two choices of $v$ gives
\begin{align*}
    \prod_{\tau\in\{+,-\}}\bigl[z^2-(u+v_\tau)z-uv_\tau\bigr] & =(z^2-uz)^2-C(z+u)(z^2-uz)-D(z+u)^2\\
    &=L(z)u^2+M(z)u+N(z),
\end{align*}
where
\begin{align*}
    L(z)&=z^2+Cz-D,\\
    M(z)&=-2z(z^2+D),\\
    N(z)&=z^2(z^2-Cz-D).
\end{align*}
Since $u_++u_-=A$ and $u_+u_-=-B$, multiplication over $u_+$ and $u_-$ then yields
\begin{align}
    P_{A,B,C,D}(z) \coloneqq P_{ab \mid cd}(z) =B^2L^2-BM^2+N^2-ABLM+(A^2+2B)LN+AMN.
    \label{eq:compact-degree-eight}
\end{align}
Expanding \eqref{eq:compact-degree-eight} gives the monic degree-eight
polynomial
\begin{align}
    P_{A,B,C,D}(z)={}&z^8-2(A+C)z^7+(A^2+2AC+C^2-2B-2D)z^6\notag\\
    &+2(AB+CD)z^5\notag\\
    &+\bigl(-A^2C^2-2A^2D+2ABC+2ACD+B^2-2BC^2-12BD+D^2\bigr)z^4\notag\\
    &+2(AD^2+B^2C)z^3\notag\\
    &+\bigl(A^2D^2+2ABCD+B^2C^2-2B^2D-2BD^2\bigr)z^2\notag\\
    &-2BD(AD+BC)z+B^2D^2.
    \label{eq:generic-degree-eight}
\end{align}

We now expose the dependence of this polynomial on the chosen pairing.  Let
$e_j$ be the elementary symmetric polynomials in the four underlying inputs
and set
\begin{align*}
    S \coloneqq B+D.
\end{align*}
For every pairing of the same four inputs,
\begin{align*}
    e_1=A+C, \quad e_2=B+D+AC, \quad e_3=AD+BC, \quad e_4=BD.
\end{align*}
The coefficients other than that of $z^4$ simplify immediately from
\begin{align*}
    AB+CD&=e_1S-e_3,\\
    AD^2+B^2C&=e_3S-e_1e_4,\\
    A^2D^2+2ABCD+B^2C^2&=e_3^2,\\
    B^2D+B D^2&=e_4S.
\end{align*}
For the $z^4$ coefficient, put $T \coloneqq AC=e_2-S$.  Since
\begin{align*}
    A^2D+BC^2&=e_1e_3-TS,\\
    ABC+ACD&=TS,\\
    B^2+D^2-12BD&=S^2-14e_4,
\end{align*}
we obtain
\begin{align*}
  & -A^2C^2-2A^2D+2ABC+2ACD+B^2-2BC^2-12BD+D^2\\
  = \; & -T^2-2(e_1e_3-TS)+2TS+S^2-14e_4\\
  = \; & -e_2^2-2e_1e_3-14e_4+6e_2S-4S^2.
\end{align*}
Therefore \eqref{eq:generic-degree-eight} can be written solely in terms of $e_1,e_2,e_3,e_4$ and the pairing-dependent scalar $S$ as
\begin{align}
    P_S(z)=\;&z^8-2e_1z^7+(e_1^2-2S)z^6+2(e_1S-e_3)z^5\notag\\
    &+\bigl(-e_2^2-2e_1e_3-14e_4+6e_2S-4S^2\bigr)z^4\notag\\
    &+2(e_3S-e_1e_4)z^3+(e_3^2-2e_4S)z^2\notag\\
    &-2e_3e_4z+e_4^2.
    \label{eq:pairing-polynomial-symmetric}
\end{align}
This representation makes the two desired differences transparent. Define
\begin{align*}
    S_0\coloneqq ab+cd, \quad S_1\coloneqq ac+bd, \quad S_2\coloneqq ad+bc.
\end{align*}
Then
\begin{align*}
    e_2=S_0+S_1+S_2,
\end{align*}
and
\begin{align*}
    S_1-S_0&=-(d-a)(c-b),\\
    S_2-S_1&=-(b-a)(d-c).
\end{align*}

Let $F=(d-a)(c-b)$. Subtracting
\eqref{eq:pairing-polynomial-symmetric} at $S_0$ from the same formula at
$S_1$, and using $S_0+S_1=e_2-S_2$, gives
\begin{align*}
    P_{S_1}(z)-P_{S_0}(z) &=2Fz^6-2e_1Fz^5-2(e_2+2S_2)Fz^4 -2e_3Fz^3+2e_4Fz^2\\
    &=2Fz^2\bigl[z^4-e_1z^3-(e_2+2S_2)z^2-e_3z+e_4\bigr]\\
    &=2z^2(d-a)(c-b)Q_{ad+bc}(z).
\end{align*}
This is the first equality.

Likewise, let $G=(b-a)(d-c)$. Since $S_1+S_2=e_2-S_0$, subtraction at $S_1$ from the formula at $S_2$ yields
\begin{align*}
    P_{S_2}(z)-P_{S_1}(z)&=2Gz^6-2e_1Gz^5-2(e_2+2S_0)Gz^4-2e_3Gz^3+2e_4Gz^2\\
    &=2Gz^2\bigl[z^4-e_1z^3-(e_2+2S_0)z^2-e_3z+e_4\bigr]\\
    &=2z^2(b-a)(d-c)Q_{ab+cd}(z),
\end{align*}
which is the second equality.
\end{proof}

\subsection{Quartic evaluation}\label{app:quartic-evaluation}

\begin{proof}[Proof of Lemma \ref{lem:quartic-evaluation}]
Let
\begin{align*}
    A=a+b, \quad B = ab, \quad  C=c+d, \quad D = cd.
\end{align*}
The two inner quadratic equations and the assumption $u,v>0$ give
\begin{align}
    A=u-\frac Bu, \quad C=v-\frac Dv.
    \label{eq:inner-sums-in-uv}
\end{align}
The outer quadratic equation gives
\begin{align}
    z^2=(u+v)z+uv.
    \label{eq:outer-reduction}
\end{align}
Set $S=u+v$ and $T=uv$. Repeated use of
\eqref{eq:outer-reduction} yields
\begin{align*}
    z^3&=(S^2+T)z+ST,\\
    z^4&=(S^3+2ST)z+S^2T+T^2.
\end{align*}
For the four inputs $a,b,c,d$, their elementary symmetric polynomials satisfy
\begin{align*}
    e_1=A+C, \quad e_2=B+D+AC, \quad e_3=AD+BC, \quad e_4=BD.
\end{align*}
Substitution into \eqref{eq:auxiliary-quartic} reduces $Q_E(z)$ to a linear polynomial in $z$:
\begin{align*}
    Q_E(z)=R_1z+R_0,
\end{align*}
where
\begin{align*}
    R_0&=S^2T+T^2-e_1ST-(e_2+2E)T+e_4,\\
    R_1&=S^3+2ST-e_1(S^2+T)-(e_2+2E)S-e_3.
\end{align*}
Using \eqref{eq:inner-sums-in-uv}, we first compute
\begin{align*}
    S^2+T-e_1S-(e_2+2E)=\frac{2Bv}{u}+\frac{2Du}{v}-\frac{BD}{uv}-2E.
\end{align*}
Consequently,
\begin{align}
    R_0&=T\left(\frac{2Bv}{u}+\frac{2Du}{v}-\frac{BD}{uv}-2E\right)+BD\notag\\
    &=2\bigl(Bv^2+Du^2-Euv\bigr).
    \label{eq:R0-evaluation}
\end{align}
It remains to relate the coefficient $R_1$ to $R_0$. Directly from their definitions,
\begin{align}
    R_1-\frac ST R_0=ST-e_1T-e_3-\frac{Se_4}{T}.
    \label{eq:R1-R0-relation}
\end{align}
On the other hand, \eqref{eq:inner-sums-in-uv} gives
\begin{align*}
    T(S-e_1)&=Bv+Du,\\
    e_3&=\left(u-\frac Bu\right)D+B\left(v-\frac Dv\right)\\
    &=uD+Bv-\frac{BD}{u}-\frac{BD}{v},\\
    \frac{Se_4}{T}&=\frac{BD}{u}+\frac{BD}{v}.
\end{align*}
Substitution into \eqref{eq:R1-R0-relation} shows that $R_1=(S/T)R_0$. Combining this identity with \eqref{eq:R0-evaluation} and $T+Sz=uv+uz+vz$ gives
\begin{align*}
    Q_E(z)=R_0\left(1+\frac STz\right)=\frac{2\bigl(Bv^2+Du^2-Euv\bigr)(uv+uz+vz)}{uv},
\end{align*}
which finishes the proof.
\end{proof}

\section{Optimal-tree multiplicity}\label{app:tree-multiplicity}

Although each internal node has at most three maximizing splits, independent choices across the hierarchy can produce many optimal composition certificates. Let $T_N$ be the number of \emph{plane} full binary trees with $N$ unit-valued leaves whose evaluation under $\Kop$ equals $U_N$; plane means that left and right children are ordered. These numbers count recursive certificates, which need not yield distinct numerical schedule vectors.

\begin{theorem} \label{thm:tree-recurrence}
    The sequence $\{T_N\}_{N\ge1}$ satisfies
    \begin{align}\label{eq:tree-recurrence}
        T_1=1,
        \quad
        T_N=\sum_{m\in\mathcal{M}_N}T_mT_{N-m}
        \quad(N\ge2).
    \end{align}
    More explicitly, write $N=\nu2^\ell$, where $\nu$ is odd. For $N\ge2$,
    \begin{enumerate}[label=(\roman*)]
        \item if $\nu=1$, then $T_N=T_{N/2}^2=1$;
        \item if $\nu>1$ and $\ell=0$, then
        \begin{align*}
            T_N=2T_{(N-1)/2}T_{(N+1)/2};
        \end{align*}
        \item if $\nu>1$ and $\ell\ge1$, setting
        \begin{align*}
            a=(\nu-1)2^{\ell-1},
            \quad
            b=\nu2^{\ell-1},
            \quad
            c=(\nu+1)2^{\ell-1},
        \end{align*}
        gives
        \begin{align}\label{eq:tree-three}
            T_N=2T_aT_c+T_b^2.
        \end{align}
    \end{enumerate}
\end{theorem}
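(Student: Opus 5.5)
The plan is to derive the recurrence \eqref{eq:tree-recurrence} directly from Corollary \ref{cor:optimal-trees}, and then read off the explicit cases from the optimizer set in Corollary \ref{cor:all-splits}.

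\textbf{The recurrence.} For $N\ge2$, a plane full binary tree with $N$ leaves is determined uniquely by three pieces of data: its left-child leaf count $m\in\{1,\ldots,N-1\}$, an ordered left subtree with $m$ leaves, and an ordered right subtree with $N-m$ leaves. By Corollary \ref{cor:optimal-trees}, such a tree attains $U_N$ if and only if three conditions hold: $m\in\mathcal M_N$, the left subtree attains $U_m$, and the right subtree attains $U_{N-m}$. Hence the optimal trees with root split $m$ are in bijection with pairs of optimal subtrees, and there are exactly $T_mT_{N-m}$ of them. Trees with different root splits $m$ are distinct, so summing over $m\in\mathcal M_N$ gives \eqref{eq:tree-recurrence}. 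The base case $T_1=1$ holds because the single-leaf tree is the only tree with one leaf, and it has value $1=U_1$.

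\textbf{Explicit cases.} Substitute the three descriptions of $\mathcal M_N$ from Corollary \ref{cor:all-splits}.

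In case (i), $\mathcal M_N=\{N/2\}$, so $T_N=T_{N/2}^2$. Since $N/2$ is again a power of two, induction on $\ell$ starting from $T_1=1$ gives $T_N=1$.

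In case (ii), the two splits $(N-1)/2$ and $(N+1)/2$ are distinct and complementary. The recurrence therefore gives $T_{(N-1)/2}T_{(N+1)/2}+T_{(N+1)/2}T_{(N-1)/2}=2T_{(N-1)/2}T_{(N+1)/2}$.

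In case (iii), the three splits are $a$, $b$, $c$. These are pairwise distinct because $\nu>1$ forces $\nu-1\ge2>0$. Moreover $N-a=c$, $N-c=a$ and $N-b=b$. Summing the three terms gives $T_aT_c+T_b^2+T_cT_a$, which is \eqref{eq:tree-three}.

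\textbf{Main obstacle.} The only delicate point is the bijection step. We must confirm that Corollary \ref{cor:optimal-trees} gives both necessity and sufficiency at the root. We must also confirm that left and right subtrees are counted as ordered, so that the symmetric splits $m$ and $N-m$ contribute separately. This double counting of symmetric splits is exactly what produces the factor $2$ in cases (ii) and (iii). Everything else is substitution.
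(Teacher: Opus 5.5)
Your proposal is correct and follows essentially the same route as the paper: the root-level equivalence from Corollary~\ref{cor:optimal-trees}, independent choice of the two ordered subtrees giving $T_mT_{N-m}$ trees per root split, disjointness across different root splits, and then substitution of the three optimizer sets from Corollary~\ref{cor:all-splits}. One small wording point: $a,b,c$ are distinct simply because $2^{\ell-1}>0$; what $\nu\ge3$ actually guarantees is $a\ge1$, so that $a$ is a legitimate root split.
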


\begin{proof}
    There is only one full binary tree with one leaf, so $T_1=1$. Fix $N\ge2$ and consider an optimal plane tree with $N$ leaves. If its left subtree has $m$ leaves, then its right subtree has $N-m$ leaves. By Corollary \ref{cor:optimal-trees}, optimality of the whole tree is equivalent to the conjunction of the following three conditions: $m\in\mathcal{M}_N$, the left subtree is optimal for $m$ leaves, and the right subtree is optimal for $N-m$ leaves.

    For a fixed ordered root index $m\in\mathcal{M}_N$, the two subtrees can be chosen independently. There are $T_m$ possibilities for the left subtree and $T_{N-m}$ possibilities for the right subtree, hence $T_mT_{N-m}$ optimal trees with that root index. Different values of $m$ specify different ordered root splits and therefore give disjoint classes of plane trees. Summing over all admissible root indices proves~\eqref{eq:tree-recurrence}.

    We next substitute the explicit optimizer sets from Corollary \ref{cor:all-splits}. If $\nu=1$, then $N$ is a power of two and $\mathcal{M}_N=\{N/2\}$, so
    \begin{align*}
        T_N=T_{N/2}^2.
    \end{align*}
    Repeatedly halving $N$ eventually reaches $1$; since $T_1=1$, induction on $\ell$ yields $T_N=1$. Thus the completely balanced plane tree is the unique optimal certificate at dyadic leaf counts.

    If $\nu>1$ and $\ell=0$, then $N$ is odd and
    \begin{align*}
        \mathcal{M}_N=\left\{\frac{N-1}{2},\frac{N+1}{2}\right\}.
    \end{align*}
    The two terms in~\eqref{eq:tree-recurrence} are equal after interchanging the ordered children, which gives
    \begin{align*}
        T_N
        &=T_{(N-1)/2}T_{(N+1)/2}
          +T_{(N+1)/2}T_{(N-1)/2}\\
        &=2T_{(N-1)/2}T_{(N+1)/2}.
    \end{align*}

    Finally, suppose $\nu>1$ and $\ell\ge1$. In the notation of the statement, Corollary \ref{cor:all-splits} gives
    \begin{align*}
        \mathcal{M}_N=\{a,b,c\}.
    \end{align*}
    Since $a+c=N$ and $2b=N$, the three contributions to~\eqref{eq:tree-recurrence} are
    \begin{align*}
        T_aT_c,
        \quad
        T_b^2,
        \quad
        T_cT_a,
    \end{align*}
    and their sum is exactly~\eqref{eq:tree-three}.
\end{proof}

In particular, if $A_k\coloneqq T_{3\cdot2^k}$, then
    \begin{align*}
        A_0=2,
        \quad
        A_{k+1}=A_k^2+2,
    \end{align*}
    and consequently
    \begin{align*}
        T_{3\cdot2^k}=\exp\bigl(\Theta(2^k)\bigr)=\exp\bigl(\Theta(N)\bigr).
    \end{align*}
Thus dyadic leaf counts have a unique optimal plane certificate, whereas the subsequence $N=3\cdot2^k$ has exponentially many. For reference,
\begin{align*}
    (T_1,\ldots,T_{12})&=(1,1,2,1,4,6,4,1,8,28,48,38),\\
    (T_{3\cdot2^k})_{k=0}^{5}&=(2,6,38,1446,2090918,4371938082726).
\end{align*}

\section{Technical results for the dyadic phase}\label{app:phase-regularity-proofs}

This appendix collects the analytic ingredients that would interrupt the main phase argument: the calculus proof of the kernel power inequality, quantitative subdivision regularity, derivative products, the strict dyadic corner, and finite certificates for the phase minimum. The notation $p,q,R_N,T_0,T_1,F$, and $F_k$ is as introduced in \cref{sec:phase}.

\subsection{Proof of the Silver power inequality}\label{app:silver-power}

\begin{proof}[Proof of Lemma \ref{lem:silver-power}]
The equality is immediate if $xy=0$, so assume $x,y>0$. Write
\begin{align*}
    x=\e^{m-s}, \quad y=\e^{m+s}, \quad s\ge0,
\end{align*}
where symmetry allows the last restriction. Direct calculation gives
\begin{align*}
    \Kop(x,y)&=\e^m\exp\bigl(\operatorname{arsinh}(\cosh s)\bigr),\\
    (x^q+y^q)^{1/q}&=\e^m\bigl(2\cosh(qs)\bigr)^{1/q}.
\end{align*}
It therefore suffices to show
\begin{align*}
    G(s)\coloneqq\frac1q\log\bigl(2\cosh(qs)\bigr)
    -\operatorname{arsinh}(\cosh s)\ge0.
\end{align*}
Since $2^{1/q}=\rho$, one has $G(0)=0$, while the large-$s$ asymptotics give $G(s)\to0$ as $s\to\infty$. Moreover,
\begin{align*}
    G'(s)=\tanh(qs)-\frac{\sinh s}{\sqrt{1+\cosh^2s}}.
\end{align*}
Define
\begin{align*}
    \alpha(s)\coloneqq
    \operatorname{artanh}\left(\frac{\sinh s}{\sqrt{1+\cosh^2s}}\right).
\end{align*}
Then
\begin{align*}
    \alpha'(s)=\frac{\cosh s}{\sqrt{1+\cosh^2s}},
\end{align*}
which increases strictly from $1/\sqrt2$ to $1$. Also $1/\sqrt2<q<1$: the upper bound follows from $\rho>2$, and the lower bound from $\rho^3=7+5\sqrt2<16$. Hence $\alpha(s)-qs$ is strictly convex, initially decreasing, tends to $+\infty$, and has a unique positive zero $s_0$. Since $\tanh$ is strictly increasing, $G'$ is positive on $(0,s_0)$ and negative on $(s_0,\infty)$. Together with the endpoint values, this proves $G(s)>0$ for $s>0$. Equality therefore occurs only when $s=0$, equivalently $x=y$.
\end{proof}

\subsection{Proof of the adjacent-ratio contraction lemma}\label{app:proof-ratio-contraction}

\begin{proof}[Proof of Lemma \ref{lem:ratio-contraction}]
The adjacent-pair identities in \cref{sec:balanced} and the homogeneity of $\Kop$ give \eqref{eq:R-recursion}. Put
\begin{align*}
    \Delta(r)\coloneqq \sqrt{r^2+6r+1}.
\end{align*}
Then
\begin{align}\label{eq:AB-explicit}
    T_0(r)=\frac{1+r+\Delta(r)}{2\rho}, \quad T_1(r)=\frac\rho2\bigl(\Delta(r)-r-1\bigr),
\end{align}
where the second identity follows from $r/k(r)=k(r)-(1+r)$. Both maps fix $1$. Their derivatives are
\begin{align}\label{eq:AB-derivatives}
    T_0'(r)&=\frac1{2\rho}\left(1+\frac{r+3}{\Delta(r)}\right),\\
    T_1'(r)&=\frac\rho2\left(\frac{r+3}{\Delta(r)}-1\right).\notag
\end{align}
Since
\begin{align*}
    \left(\frac{r+3}{\Delta(r)}\right)'=-\frac8{\Delta(r)^3}<0,
\end{align*}
both derivatives are positive and at most $1/2$ on $[1,\rho]$, with equality at $r=1$. Furthermore,
\begin{align*}
    \rho=\Kop(0,\rho)<k(\rho)<\Kop(\rho,\rho)=\rho^2,
\end{align*}
so $1<T_0(\rho)<\rho$ and $1<T_1(\rho)<\rho$. Hence $T_0$ and $T_1$ map $[1,\rho]$ into itself and are $1/2$-Lipschitz on this interval, with common fixed point $1$.

For $N=1$, the claimed bounds follow immediately from $R_1=\rho$. Hence assume $N\ge2$ and write
\begin{align*}
    N=(1\epsilon_1\epsilon_2\cdots\epsilon_m)_2, \quad \epsilon_j\in\{0,1\}, \quad m=\floor{\log_2N},
\end{align*}
and let $n_j=(1\epsilon_1\cdots\epsilon_j)_2$ be the integer represented by the first $j+1$ digits, with $n_0=1$. Then
\begin{align*}
    n_j=2n_{j-1}+\epsilon_j, \quad 1\le j\le m, \quad n_m=N.
\end{align*}
The two identities in \eqref{eq:R-recursion} give, successively,
\begin{align*}
    R_{n_j}=T_{\epsilon_j}(R_{n_{j-1}}), \quad R_N=T_{\epsilon_m}\circ T_{\epsilon_{m-1}}\circ\cdots\circ T_{\epsilon_1}(R_1).
\end{align*}
Thus each digit after the leading digit selects the corresponding branch map $T_{\epsilon_j}$. For either $T\in\{T_0,T_1\}$, the fixed-point identity $T(1)=1$ and the derivative bound $0<T'(r)\le1/2$ on $[1,\rho]$ imply
\begin{align*}
    0\le T(r)-1=T(r)-T(1)\le\frac12(r-1), \quad 1\le r\le\rho.
\end{align*}
Because $R_1=U_2/U_1=\rho$ and both maps preserve $[1,\rho]$, induction along the binary prefixes yields
\begin{align*}
    0\le R_{n_j}-1\le2^{-j}(R_1-1)=2^{-j}(\rho-1), \quad 0\le j\le m.
\end{align*}
Taking $j=m=\floor{\log_2N}$ gives $R_N-1\le(\rho-1)2^{-m}$ and, in particular, $R_N\le\rho$. Moreover, $2^m\le N<2^{m+1}$ implies
\begin{align*}
    2^{-m}\le\frac2N,
\end{align*}
which proves both upper bounds in \eqref{eq:R-bound}. Finally, the strict increase of $\{U_N\}$ established in the preceding section gives $U_{N+1}>U_N>0$, and hence
\begin{align*}
    R_N=\frac{U_{N+1}}{U_N}>1.
\end{align*}
This proves the strict lower bound and completes the proof.
\end{proof}

\subsection{Proof of the canonical bi-Lipschitz interpolation theorem}\label{app:proof-bilipschitz}

For $k\ge0$ and $2^k\le j<2^{k+1}$, write
\begin{align*}
    I_{k,j}\coloneqq
    \left[\frac{j}{2^k},\frac{j+1}{2^k}\right].
\end{align*}
For $r>1$, define the subdivision weights
\begin{align}\label{eq:w-def}
    w_0(r)\coloneqq\frac{2(T_0(r)-1)}{r-1},
    \quad
    w_1(r)\coloneqq\frac{2(r-T_0(r))}{r-1},
\end{align}
and set $w_0(1)=w_1(1)=1$. Thus $w_0+w_1=2$.

\begin{proof}[Proof of \cref{thm:Lipschitz-F}]
Let $S_{k,j}$ be the slope of $F_k$ on $I_{k,j}$. Write $a \coloneqq F(j/2^k)$. Since the right endpoint value is $aR_j$, the midpoint rule, the homogeneity of $\Kop$, and the definition of $T_0$ give
\begin{align*}
    F\left(\frac{2j+1}{2^{k+1}}\right)=\frac1\rho\Kop(a,aR_j)=\frac a\rho\Kop(1,R_j)=aT_0(R_j).
\end{align*}
The left and right children of $I_{k,j}$ are $I_{k+1,2j}$ and $I_{k+1,2j+1}$. Their increments are $a(T_0(R_j)-1)$ and $a(R_j-T_0(R_j))$, respectively, whereas the increment over $I_{k,j}$ is $a(R_j-1)$. Therefore
\begin{align*}
    S_{k,j}&=2^ka(R_j-1),\\
    S_{k+1,2j}&=2^{k+1}a(T_0(R_j)-1)=\frac{2(T_0(R_j)-1)}{R_j-1}S_{k,j},\\
    S_{k+1,2j+1}&=2^{k+1}a(R_j-T_0(R_j))=\frac{2(R_j-T_0(R_j))}{R_j-1}S_{k,j}.
\end{align*}
Thus
\begin{align}\label{eq:slope-subdivision}
    S_{k+1,2j}=w_0(R_j)S_{k,j}, \quad S_{k+1,2j+1}=w_1(R_j)S_{k,j}.
\end{align}
Differentiating \eqref{eq:AB-derivatives} once more gives
\begin{align*}
    T_0''(r)=-\frac4{\rho(r^2+6r+1)^{3/2}}.
\end{align*}
Thus $T_0$ is strictly concave and $T_0'(1)=1/2$. On $[1,\rho]$,
\begin{align}\label{eq:M-correct}
    0\le-T_0''(r)\le M, \quad M \coloneqq \frac1{4\rho(\rho-1)},
\end{align}
because $-T_0''$ is decreasing on $[1,\rho]$ and $-T_0''(1)=1/[4\rho(\rho-1)]$. For $r>1$, the strict increase of $T_0$ gives $w_0(r)>0$, while concavity and $T_0(1)=1$, $T_0'(1)=1/2$ give
\begin{align*}
    T_0(r)-1\le T_0'(1)(r-1)=\frac12(r-1),
\end{align*}
so $w_0(r)\le1$. Moreover, Taylor's theorem yields some $\xi_r\in(1,r)$ such that
\begin{align*}
    T_0(r)=T_0(1)+T_0'(1)(r-1)+\frac12T_0''(\xi_r)(r-1)^2=1+\frac12(r-1)+\frac12T_0''(\xi_r)(r-1)^2.
\end{align*}
Hence, using $w_0+w_1=2$ and \eqref{eq:M-correct},
\begin{align*}
    0\le1-w_0(r)=w_1(r)-1=-T_0''(\xi_r)(r-1)\le M(r-1).
\end{align*}
The definitions at $r=1$ give the same bounds there. Thus, for every $r\in[1,\rho]$,
\begin{align}\label{eq:w-errors}
    0<w_0(r)\le1\le w_1(r), \quad 0\le1-w_0(r)=w_1(r)-1\le M(r-1).
\end{align}

Fix $I_{q,J}$ and, for $0\le\ell\le q$, set
\begin{align*}
    j_\ell \coloneqq \floor{\frac{J}{2^{q-\ell}}}.
\end{align*}
Then $I_{\ell,j_\ell}$ is the unique level-$\ell$ interval containing $I_{q,J}$. The nested sequence $I_{0,j_0}\supset I_{1,j_1}\supset\cdots\supset I_{q,j_q}$ is called the ancestor chain of $I_{q,J}$. Here $j_0=1$, $j_q=J$, and for each $0\le\ell<q$ there is a unique $\delta_\ell\in\{0,1\}$ such that $j_{\ell+1}=2j_\ell+\delta_\ell$; the values $0$ and $1$ select the left and right child, respectively. Set $r_\ell \coloneqq R_{j_\ell}$ and
\begin{align*}
    e_\ell \coloneqq 1-w_0(r_\ell)=w_1(r_\ell)-1, \quad \sigma_\ell \coloneqq 2\delta_\ell-1\in\{-1,1\}.
\end{align*}
Then $w_{\delta_\ell}(r_\ell)=1+\sigma_\ell e_\ell$. Since $I_{0,1}=[1,2]$ has slope $\rho-1$, iteration of \eqref{eq:slope-subdivision} along this chain gives
\begin{align*}
    S_{q,J}=(\rho-1)\prod_{\ell=0}^{q-1}\bigl(1+\sigma_\ell e_\ell\bigr).
\end{align*}
Since $2^\ell\le j_\ell<2^{\ell+1}$, \eqref{eq:R-bound} and \eqref{eq:w-errors} imply
\begin{align*}
    0\le e_\ell\le M(r_\ell-1)\le M(\rho-1)2^{-\ell}, \quad 0\le\ell<q.
\end{align*}
Consequently,
\begin{align*}
    E \coloneqq \sum_{\ell=0}^{\infty}M(\rho-1)2^{-\ell}=2M(\rho-1)=\frac1{2\rho}<1.
\end{align*}
In particular, $0\le e_\ell<1$. Since $1-e_\ell\le1+\sigma_\ell e_\ell\le1+e_\ell$, the elementary inequalities $\prod_{\ell=0}^{q-1}(1-e_\ell)\ge1-\sum_{\ell=0}^{q-1}e_\ell$, proved by induction, and $1+t\le e^t$ for $t\ge0$ yield
\begin{align*}
    S_{q,J}&\ge(\rho-1)\prod_{\ell=0}^{q-1}(1-e_\ell)\ge(\rho-1)\left(1-\sum_{\ell=0}^{q-1}e_\ell\right)\ge(\rho-1)(1-E)=c_-,\\
    S_{q,J}&\le(\rho-1)\prod_{\ell=0}^{q-1}(1+e_\ell)\le(\rho-1)\exp\left(\sum_{\ell=0}^{q-1}e_\ell\right)\le(\rho-1)e^E=c_+.
\end{align*}
Thus every slope of every $F_q$ lies in $[c_-,c_+]$.

If $x=m/2^q<y=n/2^q$ are in the same grid $\mathcal D_q$, then
\begin{align*}
    F(y)-F(x)=2^{-q}\sum_{j=m}^{n-1}S_{q,j}.
\end{align*}
The slope bounds therefore give \eqref{eq:bilipschitz} for $x,y\in\mathcal D$, after passing to a common level when necessary. The upper bound makes $F$ uniformly continuous on the dense set $\mathcal D$, so $F$ has a unique continuous extension to $[1,2]$. Taking dyadic limits gives \eqref{eq:bilipschitz} for all $1\le x<y\le2$. The lower bound makes the extension strictly increasing, and $F(1)=U_1=1$, $F(2)=U_2=\rho$ show that its range is $[1,\rho]$. Finally, \eqref{eq:midpoint-rule} remains valid on every dyadic interval because its two endpoints and midpoint belong to $\mathcal D$.
\end{proof}

\subsection{Subdivision derivatives and binary-path products}\label{app:proof-subdivision-derivative}

\begin{theorem} \label{thm:subdivision-derivative}
There exists a bounded function $D:[1,2]\to[c_-,c_+]$ such that
\begin{align}\label{eq:F-integral}
    F(x)=1+\int_1^xD(u)\,\d u.
\end{align}
The function $D$ is continuous at every nondyadic point $x\in(1,2)$, and $F'(x)=D(x)$ there. At every interior dyadic point, both one-sided derivatives of $F$ exist.

For a binary path $\epsilon=\{\epsilon_i\}_{i\ge1}\in\{0,1\}^{\mathbb N}$, set $r_0=\rho$ and recursively define
\begin{align}\label{eq:r-bit}
    r_i=
    \begin{cases}
        T_0(r_{i-1}),&\epsilon_i=0,\\
        T_1(r_{i-1}),&\epsilon_i=1.
    \end{cases}
\end{align}
For $k\ge1$, define the partial product
\begin{align*}
    P_k(\epsilon)
    \coloneqq(\rho-1)\prod_{i=1}^{k}w_{\epsilon_i}(r_{i-1}).
\end{align*}
Then $P_k$ converges uniformly on the path space $\{0,1\}^{\mathbb N}$ to
\begin{align}\label{eq:D-product}
    P(\epsilon)\coloneqq
    (\rho-1)\prod_{i=1}^{\infty}w_{\epsilon_i}(r_{i-1})
\end{align}
in the precise sense that $\sup_\epsilon|P_k(\epsilon)-P(\epsilon)|\to0$. More precisely, with $M=1/[4\rho(\rho-1)]$,
\begin{align*}
    \sup_{\epsilon\in\{0,1\}^{\mathbb N}}
    \sum_{i=k+1}^{\infty}
    \left|\log w_{\epsilon_i}(r_{i-1})\right|
    \le4M(\rho-1)2^{-k}.
\end{align*}
If $x\in(1,2)$ is nondyadic and $x-1=0.\epsilon_1\epsilon_2\cdots$, then $D(x)=F'(x)=P(\epsilon)$. If an interior dyadic point $x_0$ has the two binary expansions
\begin{align*}
    x_0-1
    &=0.\epsilon_1\cdots\epsilon_m000\cdots\\
    &=0.\epsilon_1\cdots\epsilon_{m-1}0111\cdots,
    \quad \epsilon_m=1,
\end{align*}
then
\begin{align*}
    F'_+(x_0)&=P(\epsilon_1,\ldots,\epsilon_m,0,0,\ldots),\\
    F'_-(x_0)&=P(\epsilon_1,\ldots,\epsilon_{m-1},0,1,1,\ldots).
\end{align*}
\end{theorem}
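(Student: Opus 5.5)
The proof reuses the slope-subdivision machinery from the proof of \cref{thm:Lipschitz-F}. For the interval $I_{k,j}$, the ancestor-chain formula gives
\begin{align*}
    S_{k,j}=(\rho-1)\prod_{\ell=0}^{k-1}w_{\delta_\ell}(R_{j_\ell}).
\end{align*}
By \eqref{eq:R-recursion}, the ratios $R_{j_\ell}$ along the chain are exactly the iterates $r_\ell$ of \eqref{eq:r-bit} driven by the binary digits of $j$ after the leading one, starting from $r_0=R_1=\rho$. Hence $S_{k,j}=P_k(\epsilon)$ for any path $\epsilon$ whose first $k$ digits are the digits of $(j-2^k)/2^k=x-1$ for $x\in I_{k,j}$.

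The first step is the uniform tail estimate. The bounds $0<w_0\le1\le w_1$ and $e_\ell\le M(\rho-1)2^{-\ell}\le 1/(2\rho)<1/2$ from \eqref{eq:w-errors} and \eqref{eq:R-bound} give
\begin{align*}
    |\log(1\pm e)|\le 2e \quad\text{for } 0\le e\le 1/2.
\end{align*}
Here $r_{i-1}-1\le(\rho-1)2^{-(i-1)}$, so the $i$-th factor contributes at most $2M(\rho-1)2^{-(i-1)}$. Summing over $i\ge k+1$ yields the bound $4M(\rho-1)2^{-k}$. Consequently $\log P_k$ converges uniformly, and so does $P_k$, since all values lie in the bounded range $[c_-,c_+]$. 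It also follows that $|P(\epsilon)-P(\epsilon')|\le C2^{-k}$ whenever $\epsilon$ and $\epsilon'$ share their first $k$ digits.

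Next, define $D(x)\coloneqq P(\epsilon(x))$, using the terminating expansion at dyadic points and $D(2)\coloneqq\rho-1$ (any value in $[c_-,c_+]$ works there). On $I_{k,j}$ the piecewise-linear $F_k$ has slope $P_k$ of the common prefix, and every $x$ in the interior of $I_{k,j}$ has expansion with that prefix. Therefore
\begin{align*}
    |F_k'-D|\le C2^{-k} \quad\text{a.e.}
\end{align*}
Since $F_k(x)=1+\int_1^x F_k'$ and $F_k\to F$ at every dyadic point, letting $k\to\infty$ gives \eqref{eq:F-integral} on $\mathcal D$, and then everywhere by continuity of both sides. Boundedness $D\in[c_-,c_+]$ is inherited from the slope bounds.

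For the pointwise statements, take a nondyadic $x$. Every dyadic interval $I_{k,j}$ containing $x$ has $x$ in its interior, and $D$ on $I_{k,j}$ is within $C2^{-k}$ of $P(\epsilon(x))$. This gives continuity of $D$ at $x$, and with \eqref{eq:F-integral} the fundamental theorem of calculus yields $F'(x)=D(x)$. At an interior dyadic $x_0$, the right neighbourhoods $[x_0,x_0+2^{-n}]$ for $n\ge m$ are level-$n$ intervals whose prefix is $\epsilon_1\cdots\epsilon_m0^{n-m}$. The average of $D$ over such an interval is therefore within $C2^{-n}$ of $P(\epsilon_1,\ldots,\epsilon_m,0,0,\ldots)$, and a squeeze argument over the nested intervals gives $F'_+(x_0)$. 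Symmetrically, the left intervals $[x_0-2^{-n},x_0]$ have prefix $\epsilon_1\cdots\epsilon_{m-1}01^{n-m}$, which gives $F'_-(x_0)$.

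The main obstacle is bookkeeping rather than analysis. One must check carefully that the ancestor-chain ratios $R_{j_\ell}$ coincide with the path iterates $r_\ell$, including the indexing offset between $\delta_\ell$ and $\epsilon_{\ell+1}$. One must also ensure that the uniform prefix estimate controls values of $D$ at all points of an interval, including points whose expansion is chosen by convention; this is why $D$ is fixed on a definite expansion at dyadics.
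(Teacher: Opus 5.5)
Your proposal is correct and follows essentially the paper's argument. It uses the same identification $S_{k,j}=P_k(\epsilon)$ via \eqref{eq:R-recursion}, the same bound $|\log(1+t)|\le 2|t|$ for the tail estimate, the terminating-expansion convention at dyadics, and the passage $F_k(x)=1+\int_1^x S_k(u)\,\d u\to F(x)$. The only real difference is the order: you define $D$ directly as $P(\epsilon(x))$ and show that the step slopes approximate it uniformly, whereas the paper first builds $D$ as the uniform limit of the Cauchy sequence of slope step functions and then identifies it with the product.

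One small precision: your prefix estimate holds on the half-open cell $[j/2^k,(j+1)/2^k)$, not on the closed $I_{k,j}$, because the right endpoint's terminating expansion has a different prefix. The open interior is all you need for continuity at nondyadic points, so this does not affect the argument.
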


\begin{proof}
For step-function bookkeeping, define the half-open level-$k$ cells
\begin{align*}
    I^{\mathrm h}_{k,j}\coloneqq \left[\frac{j}{2^k},\frac{j+1}{2^k}\right), \quad 2^k\le j<2^{k+1}.
\end{align*}
For $x\in I^{\mathrm h}_{k,j}$, set $S_k(x)\coloneqq S_{k,j}$, where $S_{k,j}$ is the slope of $F_k$ on $I_{k,j}$, and set $S_k(2)\coloneqq S_{k,2^{k+1}-1}$. Thus $S_k$ is a step function on $[1,2]$ whose value on each level-$k$ cell is the corresponding slope of $F_k$.

Fix $x\in I^{\mathrm h}_{k,j}$, and let $\delta\in\{0,1\}$ be determined by $x\in I^{\mathrm h}_{k+1,2j+\delta}$. By \eqref{eq:slope-subdivision}, $S_{k+1}(x)=w_\delta(R_j)S_k(x)$. Since $S_k(x)\le c_+$, $\floor{\log_2j}=k$, and \eqref{eq:R-bound} and \eqref{eq:w-errors} give $|w_\delta(R_j)-1|\le M(R_j-1)\le M(\rho-1)2^{-k}$, it follows that
\begin{align*}
    |S_{k+1}(x)-S_k(x)|&=S_k(x)|w_\delta(R_j)-1|\le c_+M(\rho-1)2^{-k}.
\end{align*}
The same estimate holds at $x=2$ by using the rightmost level-$k$ cell. Hence, for $m>k$,
\begin{align*}
    \norm{S_m-S_k}_\infty&\le\sum_{\ell=k}^{m-1}\norm{S_{\ell+1}-S_\ell}_\infty\le c_+M(\rho-1)\sum_{\ell=k}^{m-1}2^{-\ell}\le2c_+M(\rho-1)2^{-k}.
\end{align*}
Thus $\{S_k\}$ is uniformly Cauchy and converges uniformly to a function $D:[1,2]\to[c_-,c_+]$, because every $S_k$ takes values in the closed interval $[c_-,c_+]$.

The function $F_k$ is continuous and piecewise affine, has slope $S_k$ away from its finitely many level-$k$ nodes, and satisfies $F_k(1)=1$. Therefore
\begin{align*}
    F_k(x)=1+\int_1^xS_k(u)\,\d u, \quad 1\le x\le2.
\end{align*}
Uniform convergence of $S_k$ implies uniform convergence of the right-hand side to $G(x)\coloneqq 1+\int_1^xD(u)\,du$. If $x\in\mathcal D_m$, then $F_k(x)=F(x)$ for every $k\ge m$, so $G=F$ on the dense set $\mathcal D$. Since both functions are continuous, $G=F$ on $[1,2]$, proving \eqref{eq:F-integral}.

Now let $x\in(1,2)$ be nondyadic and write $x-1=0.\epsilon_1\epsilon_2\cdots$. Define the indices of its successive containing cells by
\begin{align*}
    j_0\coloneqq 1, \quad j_i\coloneqq 2j_{i-1}+\epsilon_i=2^i+\floor{2^i(x-1)}, \quad i\ge1.
\end{align*}
Then $x\in I^{\mathrm h}_{i,j_i}$ and $I^{\mathrm h}_{i,j_i}$ is the child of $I^{\mathrm h}_{i-1,j_{i-1}}$ selected by $\epsilon_i$. Moreover, $r_0=\rho=R_{j_0}$, and induction using \eqref{eq:R-recursion} and \eqref{eq:r-bit} gives $r_i=R_{j_i}$. Iterating \eqref{eq:slope-subdivision} along these containing cells therefore yields
\begin{align*}
    S_k(x)=(\rho-1)\prod_{i=1}^kw_{\epsilon_i}(r_{i-1}).
\end{align*}
Letting $k\to\infty$ and using $S_k(x)\to D(x)$ proves \eqref{eq:D-product}.

The same construction applies to any infinite binary path $\{\epsilon_i\}_{i\ge1}$. Since $j_{i-1}$ is a level-$(i-1)$ index, \eqref{eq:R-bound} and \eqref{eq:w-errors} give
\begin{align*}
    \left|w_{\epsilon_i}(r_{i-1})-1\right|\le M(r_{i-1}-1)\le M(\rho-1)2^{-(i-1)}=:a_i.
\end{align*}
Here $a_i\le a_1=M(\rho-1)=1/(4\rho)<1/2$. The elementary estimate $|\log(1+t)|\le2|t|$ for $|t|\le1/2$ consequently gives, uniformly over all binary paths,
\begin{align*}
    \sum_{i=k+1}^{\infty}\left|\log w_{\epsilon_i}(r_{i-1})\right|\le2M(\rho-1)\sum_{i=k+1}^{\infty}2^{-(i-1)}=4M(\rho-1)2^{-k}.
\end{align*}
This path-independent tail bound is precisely uniform convergence of the logarithms of the partial products. Since all partial products lie in the common compact interval $[c_-,c_+]$, exponentiation gives $\sup_\epsilon|P_k(\epsilon)-P(\epsilon)|\to0$. Thus the uniformity is with respect to the binary-path parameter indexing the family of products, rather than a statement about one numerical infinite product.

To prove continuity, use for every $y\in[1,2)$ its canonical binary expansion, namely the expansion that does not end in recurring ones. For dyadic $y$, this is the terminating expansion, and the preceding slope-product argument still gives $D(y)$ as the corresponding infinite product. Fix a nondyadic $x$ and an integer $k$. Since $x$ lies in the interior of its half-open level-$k$ cell, all sufficiently close $y$ lie in the same cell and hence share the first $k$ binary digits with $x$. The preceding tail estimate then gives
\begin{align*}
    |\log D(y)-\log D(x)|\le8M(\rho-1)2^{-k}.
\end{align*}
First choosing $k$ large and then $y$ sufficiently close to $x$ proves that $D$ is continuous at $x$. The integral representation \eqref{eq:F-integral} and the fundamental theorem of calculus now yield $F'(x)=D(x)$.

Finally, let $x_0\in(1,2)$ be dyadic. Its two binary expansions have the form
\begin{align*}
    x_0-1=0.\epsilon_1\cdots\epsilon_m000\cdots, \quad \epsilon_m=1,\quad x_0-1=0.\epsilon_1\cdots\epsilon_{m-1}0111\cdots.
\end{align*}
The uniform logarithmic tail estimate shows that the infinite products associated with these two paths are well defined; denote them by $D_+(x_0)$ and $D_-(x_0)$, respectively. For every $n\ge m$, the level-$n$ cells selected by the first $n$ digits of these paths are $[x_0,x_0+2^{-n})$ and $[x_0-2^{-n},x_0)$, respectively. The common-prefix argument therefore gives
\begin{align*}
    \lim_{x\downarrow x_0}D(x)=D_+(x_0), \quad \lim_{x\uparrow x_0}D(x)=D_-(x_0).
\end{align*}
For $h>0$, \eqref{eq:F-integral} implies
\begin{align*}
    \frac{F(x_0+h)-F(x_0)}h&=\frac1h\int_{x_0}^{x_0+h}D(u)\,\d u,\quad \frac{F(x_0)-F(x_0-h)}h=\frac1h\int_{x_0-h}^{x_0}D(u)\,\d u.
\end{align*}
Passing to the limit $h\downarrow0$ gives $F'_+(x_0)=D_+(x_0)$ and $F'_-(x_0)=D_-(x_0)$. Hence the terminating expansion gives the right derivative and the expansion ending in recurring ones gives the left derivative.
\end{proof}

\subsection{A certified dyadic corner}\label{app:proof-strict-corner}

\begin{corollary} \label{cor:strict-corner}
The two one-sided derivatives at $x=3/2$ satisfy
\begin{align}\label{eq:strict-corner}
    F'_-(3/2)>F'_+(3/2).
\end{align}
Consequently, $F$ is not differentiable at $3/2$ and is not continuously differentiable on $[1,2]$.
\end{corollary}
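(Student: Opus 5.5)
The plan is to reduce \eqref{eq:strict-corner} to a comparison of two explicit infinite products via \cref{thm:subdivision-derivative}, and then certify the comparison with a finite computation plus the uniform tail bound.

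Since $3/2-1=0.1_2$, we are in the dyadic case of \cref{thm:subdivision-derivative} with $m=1$ and $\epsilon_1=1$. The two paths are $(1,0,0,\ldots)$ for the right derivative and $(0,1,1,\ldots)$ for the left derivative. Hence
\begin{align*}
    F'_+(3/2)&=(\rho-1)\,w_1(\rho)\prod_{i\ge1}w_0(s_i), & s_1&=T_1(\rho),\ s_{i+1}=T_0(s_i),\\
    F'_-(3/2)&=(\rho-1)\,w_0(\rho)\prod_{i\ge1}w_1(r_i), & r_1&=T_0(\rho),\ r_{i+1}=T_1(r_i).
\end{align*}
Here $s_1=R_3=U_4/U_3$ and $r_1=R_2=U_3/U_2$. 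The leading factors point the wrong way: $w_0(\rho)<1<w_1(\rho)$ by \eqref{eq:w-errors}. The inequality therefore has to come from the tails. Every tail factor on the left is at least $1$, and every tail factor on the right is at most $1$. So it suffices to show
\begin{align*}
    \log\frac{w_1(\rho)}{w_0(\rho)}<\sum_{i\ge1}\log w_1(r_i)-\sum_{i\ge1}\log w_0(s_i).
\end{align*}

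First, I will compute the leading data in closed form using \eqref{eq:AB-explicit} with $\Delta(\rho)=\sqrt{\rho^2+6\rho+1}$. This gives $T_0(\rho)$, $T_1(\rho)$, and $w_0(\rho)$, with $w_1(\rho)=2-w_0(\rho)$. All of these are explicit algebraic numbers, for instance $U_3=\Kop(1,\rho)$ and $U_4=\rho^2$.

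Second, I will evaluate finitely many tail factors, say $K$ of them, with rigorous interval arithmetic. For this I will iterate $T_0$ and $T_1$, which are monotone and $1/2$-Lipschitz on $[1,\rho]$ by \cref{lem:ratio-contraction}, so outward-rounded enclosures propagate cleanly. Third, I will bound the remainders with the path-uniform estimate from \cref{thm:subdivision-derivative}: the tail beyond level $K$ contributes at most $4M(\rho-1)2^{-K}$ in logarithm.

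The tails are also one-signed, which I plan to exploit. Both maps contract toward $1$ at rate at most $1/2$, and \eqref{eq:w-errors} combined with a lower bound $-T_0''(r)\ge m_0\coloneqq -T_0''(\rho)>0$ on $[1,\rho]$ gives $w_1(r_i)-1\ge m_0(r_i-1)$. This means the left tail can only add to the gap, so a lower bound on it is free. Only the right tail needs an upper bound for its deficit, and that deficit is at most $M(s_i-1)$ per factor.

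The main obstacle is quantitative. The gap between the two derivatives may be small compared with the crude constants $M$ and $m_0$, in which case a short hand computation will not suffice. If so, I would first try to sharpen the per-step estimates using the exact identity $1-w_0(r)=w_1(r)-1$ together with the explicit formula for $T_0$, rather than the Taylor bounds. If that is still not enough, I would fall back on a rational certificate: interval enclosures of the first $K\approx 20$ factors of each product, in the same spirit as the level-17 certificate of \cref{app:certificates}. That route yields strict separation once the enclosure widths plus $4M(\rho-1)2^{-K}$ fall below the computed gap.

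Finally, non-differentiability at $3/2$ follows immediately from the strict inequality. Failure of $C^1$ on $[1,2]$ also follows, because any $C^1$ function would have equal one-sided derivatives at every interior point.
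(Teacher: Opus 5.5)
Your proposal follows the paper's proof: the same two binary paths in \cref{thm:subdivision-derivative}, the one-signed tails from \eqref{eq:w-errors}, and a finite rational interval certificate. The paper carries this out with six tail factors per product, obtaining a truncated left product above $1.4196879$ and a truncated right product below $1.4196048$. Note that one-signedness alone makes truncation after any number of tail factors a lower bound for $F'_-(3/2)$ and an upper bound for $F'_+(3/2)$, so the uniform tail estimate is unnecessary; the right-tail deficit works in your favour, so an upper bound on it points the wrong way. Since the truncated gap is only about $8\times10^{-5}$, it is this explicit certificate, not a hand estimate with $M$ and $m_0$, that carries the proof.
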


\begin{proof}
The two binary expansions
\begin{align*}
    \frac32-1=\frac12=0.0111\cdots=0.1000\cdots
\end{align*}
and \eqref{eq:D-product} give
\begin{align*}
    F'_-(3/2) &=(\rho-1)w_0(\rho) \prod_{i\ge0}w_1\bigl(T_1^i(T_0(\rho))\bigr),\\
    F'_+(3/2) &=(\rho-1)w_1(\rho) \prod_{i\ge0}w_0\bigl(T_0^i(T_1(\rho))\bigr).
\end{align*}
For $m\ge0$, set
\begin{align*}
    x_m&\coloneqq T_1^m(T_0(\rho)), \quad L_m\coloneqq (\rho-1)w_0(\rho)\prod_{i=0}^{m-1}w_1(x_i),\\
    y_m&\coloneqq T_0^m(T_1(\rho)), \quad R_m\coloneqq (\rho-1)w_1(\rho)\prod_{i=0}^{m-1}w_0(y_i),
\end{align*}
where an empty product equals one. Thus $L_6$ and $R_6$ contain the initial factor and exactly six tail factors.

We now give a rational interval certificate for these two finite products. For positive rational intervals $X=[a,b]$ and $Y=[c,d]$, with $c>0$, use
\begin{gather*}
    X+Y=[a+c,b+d], \quad X-Y=[a-d,b-c],\\
    XY=[ac,bd], \quad \frac XY=\left[\frac ad,\frac bc\right].
\end{gather*}
An enclosure $\sqrt X\subseteq[s_-,s_+]$ is certified by the rational inequalities $s_-^2\le a\le b\le s_+^2$. For $G\in\{T_0,T_1,w_0,w_1\}$, let $G^\square(X)$ denote the interval obtained by applying these rules to the explicit formulas in \eqref{eq:AB-explicit} and \eqref{eq:w-def}; every square-root endpoint is chosen by the preceding squaring test. Finally, let $\langle X\rangle_{12}$ be the smallest interval with endpoints in $10^{-12}\mathbb Z$ that contains $X$.

Start from
\begin{align*}
    \boldsymbol\rho\coloneqq \left[2.414213562373095048801688724209,2.414213562373095048801688724210\right],
\end{align*}
which contains $\rho=1+\sqrt2$ because the squares of the two endpoints after subtracting one lie below and above $2$, respectively. Define
\begin{align*}
    \mathbf x_0&\coloneqq \left\langle T_0^\square(\boldsymbol\rho)\right\rangle_{12}, \quad \mathbf L_0\coloneqq \left\langle(\boldsymbol\rho-1)w_0^\square(\boldsymbol\rho)\right\rangle_{12},\\
    \mathbf y_0&\coloneqq \left\langle T_1^\square(\boldsymbol\rho)\right\rangle_{12}, \quad \mathbf R_0\coloneqq \left\langle(\boldsymbol\rho-1)w_1^\square(\boldsymbol\rho)\right\rangle_{12},
\end{align*}
and, for $0\le m<6$, recursively set
\begin{align*}
    \mathbf x_{m+1}&\coloneqq \left\langle T_1^\square(\mathbf x_m)\right\rangle_{12}, \quad \mathbf L_{m+1}\coloneqq \left\langle\mathbf L_mw_1^\square(\mathbf x_m)\right\rangle_{12},\\
    \mathbf y_{m+1}&\coloneqq \left\langle T_0^\square(\mathbf y_m)\right\rangle_{12}, \quad \mathbf R_{m+1}\coloneqq \left\langle\mathbf R_mw_0^\square(\mathbf y_m)\right\rangle_{12}.
\end{align*}
Because rational interval operations preserve inclusion, induction gives $x_m\in\mathbf x_m$, $y_m\in\mathbf y_m$, $L_m\in\mathbf L_m$, and $R_m\in\mathbf R_m$. Six exact rational interval iterations give the terminal enclosures
\begin{align*}
 x_6&\in[1.006713216536,1.006713216564],&
 L_6&\in[1.419687938587,1.419687949396],\\
 y_6&\in[1.006668059132,1.006668059134],&
 R_6&\in[1.419604771595,1.419604772437].
\end{align*}
In particular,
\begin{align*}
 1.4196879<L_6<1.4196880,
 \quad
 1.4196047<R_6<1.4196048.
\end{align*}
By \eqref{eq:w-errors}, every omitted factor in the left product is at least one, whereas every omitted factor in the right product is at most one. Therefore
\begin{align*}
    F'_-(3/2)\ge L_6>R_6\ge F'_+(3/2),
\end{align*}
which proves \eqref{eq:strict-corner}.
\end{proof}

\subsection{Nested certificates for the phase minimum}\label{app:certificates}

No numerical approximation is used as a premise in the qualitative phase theorem. The following finite-grid bounds provide optional, rigorously convergent certificates for its minimum. For $k\ge0$, define
\begin{align}\label{eq:phase-cert-def}
    \underline\phi_k
    &\coloneqq
    \min_{2^k\le j<2^{k+1}}\frac{U_j}{(j+1)^p},\\
    \overline\phi_k
    &\coloneqq
    \min_{2^k\le j\le2^{k+1}}\frac{U_j}{j^p}.\notag
\end{align}

\begin{theorem} \label{thm:phase-cert}
For every $k\ge0$,
\begin{align*}
    \underline\phi_k\le\phimin\le\overline\phi_k,
    \quad
    0\le\overline\phi_k-\underline\phi_k\le\frac{p}{2^k}.
\end{align*}
The sequence $\{\underline\phi_k\}$ is nondecreasing, the sequence $\{\overline\phi_k\}$ is nonincreasing, and both converge to $\phimin$. All quantities through level $k$ are computable in $O(2^k)$ scalar operations.
\end{theorem}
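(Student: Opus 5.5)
The plan is to bound $\Phi$ on each phase cell $[\log_2(j/2^k),\log_2((j+1)/2^k)]$ using only monotonicity, so no Lipschitz constant is needed. By the exact phase law \eqref{eq:exact-phase}, for $t\in[0,1]$ we have $\Phi(t)=\rho^{-t}F(2^t)=F(x)/x^p$ with $x=2^t\in[1,2]$, because $\rho^{t}=2^{pt}=x^p$. Fix $k$ and a level-$k$ interval $I_{k,j}$ with $2^k\le j<2^{k+1}$. For $x\in I_{k,j}$, Theorem \ref{thm:Lipschitz-F} shows that $F$ is increasing, so $F(x)\ge F(j/2^k)=\rho^{-k}U_j$. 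We also have $x^p\le((j+1)/2^k)^p=\rho^{-k}(j+1)^p$. Together these give $F(x)/x^p\ge U_j/(j+1)^p$. Taking the minimum over the cells covering $[1,2]$ yields $\phimin\ge\underline\phi_k$.

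For the upper bound, I use that every grid value is attained: at $x=j/2^k$ we have $\Phi(\log_2 x)=\rho^{-k}U_j/(j/2^k)^p=U_j/j^p$. Hence $\phimin\le\overline\phi_k$.

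The gap bound is the step needing the most care. Choose $j$ attaining $\underline\phi_k$. Then $\overline\phi_k\le U_j/j^p$, so
\[
\overline\phi_k-\underline\phi_k\le \frac{U_j}{j^p}\Bigl(1-\bigl(\tfrac{j}{j+1}\bigr)^p\Bigr).
\]
By Proposition \ref{prop:sharp-growth}, $U_j/j^p\le1$. Since $p>1$, Bernoulli's inequality in the form $1-(1-\varepsilon)^p\le p\varepsilon$ applies with $\varepsilon=1/(j+1)\le 2^{-k}$. This gives a gap of at most $p2^{-k}$. In addition, $\underline\phi_k\ge0$, so the gap is nonnegative because $\underline\phi_k\le\phimin\le\overline\phi_k$.

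Monotonicity in $k$ comes from refinement.
\begin{itemize}
\item For $\overline\phi_k$: the level-$k$ grid values $U_j/j^p$ reappear at level $k+1$ as $U_{2j}/(2j)^p=\rho U_j/(2^pj^p)=U_j/j^p$ by \eqref{eq:dyadic-scaling}. The level-$(k+1)$ minimum is therefore taken over a superset of the same values, so $\overline\phi_{k+1}\le\overline\phi_k$.
\item For $\underline\phi_k$: it suffices to show that each child cell bound dominates its parent's bound. For the left child $[2j,2j+1]$, its bound is $U_{2j}/(2j+1)^p\ge\rho U_j/(2j+2)^p=U_j/(j+1)^p$. For the right child, its bound is $U_{2j+1}/(2j+2)^p\ge U_{2j}/(2j+2)^p=U_j/(j+1)^p$, using that $U$ is increasing (Corollary \ref{cor:balanced-optimality}).
\end{itemize}
Hence $\underline\phi_{k+1}\ge\underline\phi_k$. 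Convergence of both sequences to $\phimin$ follows from the sandwich and the gap bound $p2^{-k}\to0$.

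For complexity, Algorithm~\ref{alg:bitwise}'s adjacent-pair identities, or simply the balanced recurrence \eqref{eq:balanced-recurrence} tabulated upward, give $U_1,\dots,U_{2^{k+1}}$ in $O(2^k)$ operations. Each certificate is then a minimum over $O(2^k)$ ratios. Summing over levels $0,\dots,k$ gives a geometric total, which stays $O(2^k)$.
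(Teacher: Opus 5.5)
Your proposal is correct and follows essentially the same route as the paper's proof. The shared steps are: the cell bound $F(x)/x^p\ge U_j/(j+1)^p$ from monotonicity of $F$, evaluation on the grid for $\overline\phi_k$, the Bernoulli gap estimate at an index attaining $\underline\phi_k$, and a child-dominates-parent comparison for monotonicity in $k$. The only cosmetic difference is at the right child: you get $U_{2j+1}>U_{2j}=\rho U_j$ from monotonicity of $U$, while the paper writes the same inequality as $U_{2j+1}=\Kop(U_j,U_{j+1})>\rho U_j$.
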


\begin{proof}
Writing $x=2^t$ gives
\begin{align*}
    \phimin=\min_{x\in[1,2]}\frac{F(x)}{x^p}.
\end{align*}
If $x\in I_{k,j}$, monotonicity of $F$ and \eqref{eq:F-dyadic} imply
\begin{align*}
    \frac{F(x)}{x^p}
    \ge\frac{F(j/2^k)}{((j+1)/2^k)^p}
    =\frac{U_j}{(j+1)^p},
\end{align*}
which gives the lower certificate. Minimizing $F(x)/x^p$ over the level-$k$ grid gives the upper certificate.

For each parent index $j$, the lower candidates of its two children satisfy
\begin{align*}
    \frac{U_{2j}}{(2j+1)^p}
    &>\frac{U_j}{(j+1)^p},\\
    \frac{U_{2j+1}}{(2j+2)^p}
    &>\frac{\rho U_j}{\rho(j+1)^p}
    =\frac{U_j}{(j+1)^p},
\end{align*}
where we used \eqref{eq:dyadic-scaling} and $U_{2j+1}=\Kop(U_j,U_{j+1})>\rho U_j$. Hence $\underline\phi_{k+1}\ge\underline\phi_k$. Since the level-$k$ grid is contained in the level-$(k+1)$ grid, $\overline\phi_{k+1}\le\overline\phi_k$.

Choose $j_k$ attaining $\underline\phi_k$. By Proposition \ref{prop:sharp-growth} and Bernoulli's inequality,
\begin{align*}
    0\le\overline\phi_k-\underline\phi_k
    &\le\frac{U_{j_k}}{j_k^p}
    \left[1-\left(\frac{j_k}{j_k+1}\right)^p\right]\\
    &\le\frac{p}{j_k+1}
    \le\frac{p}{2^k}.
\end{align*}
The nested bounds and vanishing gap force both endpoints to converge to $\phimin$. Finally, iterating the balanced recurrence computes all required $U_j$ through $2^{k+1}$ in $O(2^k)$ scalar operations; scanning the two finite candidate sets has the same order.
\end{proof}

At level $k=17$, the minimizing indices in \eqref{eq:phase-cert-def} and directed outward intervals are
\begin{center}
\begin{tabular}{ccc}
\toprule
Quantity & Index & Outward interval\\
\midrule
$\underline\phi_{17}$ & $181314$ & $(0.9928123698,\ 0.9928123700)$\\
$\overline\phi_{17}$ & $181324$ & $(0.9928193322,\ 0.9928193324)$\\
\bottomrule
\end{tabular}
\end{center}
Enlarging each side by $10^{-8}$ gives the certified enclosure
\begin{align}\label{eq:phase-numerical}
    0.99281236<\phimin<0.99281934.
\end{align}
The certificate requires only the balanced recurrence through $2^{18}$ and the exact finite minima in \eqref{eq:phase-cert-def}.

\subsection{Marginal increments and fixed dyadic rays}\label{app:phase-consequences}

The derivative products also determine the first-order gain from adding one leaf at a fixed dyadic mantissa. If $x\in(1,2)$ is nondyadic and $N_k=\floor{2^kx}$, then
\begin{align}\label{eq:marginal-U}
    \lim_{k\to\infty}N_k^{1-p}
    \bigl(U_{N_k+1}-U_{N_k}\bigr)
    =x^{1-p}D(x).
\end{align}
Indeed, the slope of the level-$k$ cell containing $x$ is
\begin{align*}
    2^k\rho^{-k}\bigl(U_{N_k+1}-U_{N_k}\bigr).
\end{align*}
It converges to $D(x)$ by \cref{thm:subdivision-derivative}; multiplying by $(N_k/2^k)^{1-p}$ and using $N_k/2^k\to x$ proves \eqref{eq:marginal-U}. At a dyadic $x$, the same argument along the right or left adjacent cells replaces $D(x)$ by $F'_+(x)$ or $F'_-(x)$, respectively.

Exact scaling separately shows that a fixed finite seed selects a phase but cannot change the exponent. For every $m\ge1$ and $k\ge0$,
\begin{align*}
    \frac{U_{m2^k}}{(m2^k)^p}=\frac{U_m}{m^p}, \quad (m2^k)^ps_{m2^k} = m^ps_m. 
\end{align*}
More generally, a seed with $m$ leaves and reciprocal scalar value $W>0$ has value $\rho^kW$ after $k$ symmetric self-compositions, so its normalized scalar and rate constants remain $W/m^p$ and $m^p/W$. For example,
\begin{align*}
    (2^k)^ps_{2^k}=1,
    \quad
    (3\cdot2^k)^ps_{3\cdot2^k}
    =\frac{3^p}{U_3}\approx1.006818956.
\end{align*}

\section{Technical details for the OBS-F phase}\label{app:obsf-details}

\subsection{Local comparisons for the reciprocal sandwich}\label{app:obsf-local-comparisons}

\begin{lemma} \label{lem:obsf-local-comparisons}
For $x,y>0$,
\begin{align}
    \mathcal A(x,\rho y)&\le\rho\Kop(x,y),
    \label{eq:obsf-upper-local}\\
    \mathcal A(x,2y-1)&\ge2\Kop(x,y)-1,
    \quad x,y\ge1.
    \label{eq:obsf-lower-local}
\end{align}
\end{lemma}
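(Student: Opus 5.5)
Both inequalities are elementary, and I would prove each by reducing it to a polynomial inequality. The reduction isolates the radicals, squares twice, and keeps track of signs at each squaring. \cref{lem:K-properties} gives positive homogeneity of $\Kop$, and $\mathcal A$ is homogeneous as well, so for \eqref{eq:obsf-upper-local} I would normalize $y=1$. Multiplying by $2$, the claim becomes
\begin{align*}
    (4-\rho)x+\sqrt{\rho^2+8\rho x}\le\rho\sqrt{x^2+6x+1},\quad x>0.
\end{align*}
Here $4-\rho=3-\sqrt2>0$, so both sides are positive and squaring is reversible. At $x=0$ both sides of the unnormalized inequality equal $2\rho$, so equality there is expected, and the argument must not lose anything at that boundary.

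After the first squaring, the square-root-free terms on the two sides differ by
\begin{align*}
    (\rho^2-(4-\rho)^2)x^2+(6\rho^2-8\rho)x=(8\rho-16)x^2+(6\rho^2-8\rho)x .
\end{align*}
This is nonnegative because $\rho>2$. The remaining cross term is $2(4-\rho)x\sqrt{\rho^2+8\rho x}$. Dividing by $x>0$, I must show
\begin{align*}
    (8\rho-16)x+6\rho^2-8\rho\ge2(4-\rho)\sqrt{\rho^2+8\rho x}.
\end{align*}
The left side is positive, so I would square once more. This leaves a quadratic in $x$. I would simplify its coefficients using $\rho^2=2\rho+1$ and check that the quadratic has nonnegative coefficients, or failing that a nonpositive discriminant. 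At $x=0$ the last inequality reads $6\rho^2-8\rho\ge2(4-\rho)\rho$, i.e.\ $8\rho^2\ge 16\rho$, which is strict. So I expect the quadratic to have positive constant term and nonnegative higher coefficients.

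For \eqref{eq:obsf-lower-local} I would set $w=2y-1\ge1$ and multiply by $2$. The $2x+2y-2$ and $4x+w$ terms then cancel down to
\begin{align*}
    2x+1+\sqrt{w^2+8xw}\ge2\sqrt{x^2+6xy+y^2}.
\end{align*}
Both sides are positive, so squaring is reversible. Using $w^2+8xw=4y^2-4y+1+16xy-8x$, the non-radical part of the left square minus the right square equals $2-4x-4y-8xy$. The inequality is therefore equivalent to
\begin{align*}
    2(2x+1)\sqrt{4y^2-4y+1+16xy-8x}\ge8xy+4x+4y-2.
\end{align*}
For $x,y\ge1$ the right side is positive, so I would square again. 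This reduces the claim to a polynomial inequality $P(x,y)\ge0$ on $[1,\infty)^2$. At $x=y=1$ the two sides are $18$ and $14$, so the inequality has slack there.

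The main obstacle is certifying $P\ge0$ on the whole quadrant. My plan is to substitute $x=1+a$ and $y=1+b$ with $a,b\ge0$, expand, and verify that every monomial coefficient is nonnegative; the constant term is $18^2-14^2=128>0$. If some mixed coefficient turns out negative, I would group it with dominating terms by AM--GM, for example absorbing a negative $a^ib^j$ term into the adjacent pure powers. Alternatively I would treat $P$ as a quadratic in $x$ whose coefficients are polynomials in $y$, and check their signs.
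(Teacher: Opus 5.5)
Your reductions are correct and both plans go through. One expectation in the upper bound is wrong, though. After your second squaring, the quadratic (with $\rho=1+\sqrt2$) is
\begin{align*}
    64(3-2\sqrt2)\,x^2-64\sqrt2\,x+(96+64\sqrt2).
\end{align*}
Its linear coefficient is negative, so the ``nonnegative coefficients'' branch fails.

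Your fallback is what saves it. The discriminant is $(64\sqrt2)^2-4\cdot64(3-2\sqrt2)(96+64\sqrt2)=8192-8192=0$, so the quadratic is the perfect square $64(3-2\sqrt2)\bigl(x-2-\tfrac{3\sqrt2}{2}\bigr)^2$. Hence \eqref{eq:obsf-upper-local} is an equality at the interior ratio $x/y=2+\tfrac{3\sqrt2}{2}$, not only at $x=0$. There is no slack there, so the discriminant must be computed exactly in $\mathbb Q(\sqrt2)$; a floating-point check would be inconclusive.

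For the lower bound, your polynomial factors as $P(x,y)=128x^2(2xy-x+y-1)$. Under your shift this becomes $128(1+a)^2(1+a+3b+2ab)$, so every coefficient is nonnegative and no AM--GM regrouping is needed.

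The paper reaches the same certificates with a single squaring. It writes $\mathcal A(u,w)$ as the larger root of $z^2-(4u+w)z+4u^2$. It then evaluates this quadratic at the candidate value $\rho k$ (respectively $2k-1$), where $k=\Kop(x,y)$, and uses $k^2=(x+y)k+xy$ to eliminate one of the two radicals. The resulting one-radical inequalities have differences of squares $8(6-4\sqrt2)\bigl(\theta-2-\tfrac{3\sqrt2}{2}\bigr)^2$ with $\theta=x/y$, and $8x^2(2xy-x+y-1)$. These have the same tangency point and the same factor you obtain.

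The direction of each inequality is then recovered by root location. The product of the roots is $4x^2$, so the smaller root is at most $2x$, while both $\rho k$ and $2k-1$ exceed $2x$. Your route replaces this root-location step with a second squaring and its own positivity check, which you handled correctly. The paper's version is shorter; yours is more mechanical but equally rigorous.
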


\begin{proof}
Let $k=\Kop(x,y)$, so
\begin{align}\label{eq:obsf-K-quadratic-app}
    k^2=(x+y)k+xy.
\end{align}
For $x,y>0$, the value $\mathcal A(x,\rho y)$ is the larger root of
\begin{align*}
    Q(z)=z^2-(4x+\rho y)z+4x^2.
\end{align*}
Using \eqref{eq:obsf-K-quadratic-app} and $\rho^2=2\rho+1$ gives
\begin{align*}
    Q(\rho k)=x\bigl(4x+\rho^2y-(2\rho-1)k\bigr).
\end{align*}
Put $\theta=x/y$ and $s=\sqrt2$. After substituting the explicit formula for $k$, the nonnegativity of the factor in parentheses is equivalent to
\begin{align*}
    (7-2s)\theta+5+2s
    \ge(1+2s)\sqrt{\theta^2+6\theta+1}.
\end{align*}
Both sides are positive, and the left-hand side squared minus the right-hand side squared equals
\begin{align*}
    8(6-4s)\left(\theta-2-\frac{3s}{2}\right)^2\ge0.
\end{align*}
Hence $Q(\rho k)\ge0$. The smaller root of $Q$ is below $2x$, while $\rho k>2x$; thus $\rho k$ lies to the right of the larger root. This proves \eqref{eq:obsf-upper-local}.

For \eqref{eq:obsf-lower-local}, $\mathcal A(x,2y-1)$ is the larger root of
\begin{align*}
    \widetilde Q(z)=z^2-(4x+2y-1)z+4x^2.
\end{align*}
At $z=2k-1$, reduction by \eqref{eq:obsf-K-quadratic-app} gives the equivalent inequality
\begin{align*}
    (2x+1)\sqrt{x^2+6xy+y^2}
    \ge2x^2+2xy+3x+y.
\end{align*}
The difference of squares is $8x^2(2xy-x+y-1)\ge0$ for $x,y\ge1$. Therefore $\widetilde Q(2k-1)\le0$. Since $2k-1>2(x+y)-1>2x$ lies to the right of the smaller root, it does not exceed the larger root, proving \eqref{eq:obsf-lower-local}.
\end{proof}

\subsection{Discrete modulus for the normalized OBS-F sequence}\label{app:obsf-discrete-modulus}

\begin{lemma} \label{lem:obsf-discrete-modulus}
Let $L_F$ be a Lipschitz constant for $F$, set $L_U\coloneqq\max\{1,L_F\}$, and put $\phi_*\coloneqq\min\Phi$. With $U_0=0$, for every $m,N\ge1$,
\begin{align*}
    0<U_m-U_{m-1}&\le L_Um^{p-1},\\
    0<W_{N+1}-W_N&\le4L_UN^{p-1},\\
    |C_{N+1}-C_N|&\le\frac{4L_U+\rho p}{\phi_*^2N}.
\end{align*}
\end{lemma}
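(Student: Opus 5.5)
We prove the three estimates in order; each uses the previous one.

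\emph{Increments of $U$.} For $m=1$ the claim reads $0<U_1-U_0=1\le L_U$, which holds since $L_U\ge1$. For $m\ge2$ we use the interpolation $F$. Write $k=\floor{\log_2 m}$, so that $2^k\le m<2^{k+1}$.

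If $m-1\ge 2^k$, both indices lie in the same dyadic block. By \eqref{eq:F-dyadic} and the Lipschitz bound in \cref{thm:Lipschitz-F},
\begin{align*}
U_m-U_{m-1}=\rho^k\bigl(F(m/2^k)-F((m-1)/2^k)\bigr)\le L_F\rho^k2^{-k}=L_F 2^{k(p-1)}.
\end{align*}
Since $p>1$ and $2^k\le m$, this is at most $L_Um^{p-1}$.

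If $m=2^k$, we instead view both indices at level $k-1$, where $m-1=2^k-1$ and $m=2^k$ correspond to the points $2-2^{1-k}$ and $2$. The same computation gives $U_m-U_{m-1}\le L_F\rho^{k-1}2^{1-k}$, which is again at most $L_Fm^{p-1}$.

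Positivity of the increment is the strict monotonicity in Corollary~\ref{cor:balanced-optimality}.

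\emph{Increments of $W$.} Positivity of $W_{N+1}-W_N$ comes from a lower bound. Take an optimal split $N=m+(N-m)$ and enlarge the prefix to $m+1$. Since $\mathcal A$ is strictly increasing in $u$ and $U_{m+1}>U_m$,
\begin{align*}
W_{N+1}\ge\mathcal A(U_{m+1},W_{N-m})>W_N.
\end{align*}

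For the upper bound, let $m$ be an optimal split for $N+1$. We compare with a split of $N$ that removes one leaf.
\begin{itemize}
\item If $m\ge2$, use $W_N\ge\mathcal A(U_{m-1},W_{N+1-m})$.
\item If $m=1$, use $W_N\ge\mathcal A(U_1,W_{N-1})$, which requires $N\ge2$; the case $N=1$ is a direct check, $W_2=\mathcal A(1,1)=\rho$.
\end{itemize}
In both cases we need the partial derivatives of $\mathcal A$. Differentiating the explicit formula gives $\partial_u\mathcal A=2+2w/\sqrt{w^2+8uw}\le4$ and $\partial_w\mathcal A\le1+(w+4u)/\sqrt{w^2+8uw}$. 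The second bound is uncomfortable, since it is large when $w\ll u$.

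So the case $m\ge2$ is the easy one. Only the $u$-argument changes, so the mean value theorem in $u$ gives
\begin{align*}
W_{N+1}-W_N\le4(U_m-U_{m-1})\le4L_Um^{p-1}\le4L_U N^{p-1},
\end{align*}
using $m\le N$ and $p>1$.

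The case $m=1$ is the main obstacle. Here the difference is $\mathcal A(1,W_N)-\mathcal A(1,W_{N-1})$. When $w\ge1\ge u$ we have $\partial_w\mathcal A\le 1+5/3$, because $w+4u\le 5w/3\cdot\ldots$ can be controlled via $\sqrt{w^2+8uw}\ge w$. This gives a contraction-type recursion $\Delta_{N}\le c\,\Delta_{N-1}$ with $c<4$, which is not immediately summable.

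The plan is therefore to do a strong induction on $N$ for the bound $W_{N+1}-W_N\le4L_UN^{p-1}$, treating the $m=1$ branch by a different comparison. We compare with a split of $N$ that keeps the prefix $1$ but shrinks the continuation, and we handle the homogeneous growth via $\partial_w\mathcal A\to1$ as $w\to\infty$. If this proves too delicate, a fallback is to choose whichever of the two comparison splits lets the mean value theorem be applied in $u$ alone. Such a split exists when $m\ge2$, and we would check whether some optimal split of $N+1$ with $m\ge2$ always exists for $N\ge2$, using that $\mathcal A(1,\cdot)$ lies below the alternatives for large $N$ by Proposition~\ref{thm:obsf-sandwich}.

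\emph{Increments of $C$.} Write $C_N=N^p/W_N$. Then
\begin{align*}
C_{N+1}-C_N=\frac{(N+1)^p-N^p}{W_{N+1}}-\frac{N^p(W_{N+1}-W_N)}{W_NW_{N+1}}.
\end{align*}
For the first term, the mean value theorem gives $(N+1)^p-N^p\le p(N+1)^{p-1}\le p\rho N^{p-1}$, since $2^{p-1}\le\rho$.

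Both denominators are controlled by the sandwich \eqref{eq:obsf-sandwich} together with the exact phase law, which give $W_N\ge2U_N-1\ge U_N\ge\phi_*N^p$. Consequently $W_{N+1}\ge\phi_*N^p$ as well.

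Inserting the bound on $W_{N+1}-W_N$ from the previous step into the second term, both terms are bounded by $(\rho p+4L_U)/(\phi_*^2N)$. Here we use $\phi_*\le1$, so that dividing by $\phi_*$ once or twice only loosens the bound.
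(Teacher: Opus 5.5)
The gap is in the upper bound on $W_{N+1}-W_N$ when the maximizing pivot is $m=1$. You correctly flag this as the main obstacle, but you do not resolve it, and your third estimate depends on it. Everything else is fine: the first estimate, the positivity argument, and the direct computation of $C_{N+1}-C_N$. Your case split at $m=2^k$ is equivalent to the paper's choice $r=\floor{\log_2(m-1)}$, and the paper routes the last step through $D_N=W_N/N^p$, which is the same calculation.

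Your comparison $W_N\ge\mathcal A(U_1,W_{N-1})$ perturbs the $w$-argument. Since $(w+4u)^2\ge w^2+8uw$, one has
\begin{align*}
\partial_w\mathcal A=\tfrac12+\tfrac{w+4u}{2\sqrt{w^2+8uw}}\ge1
\end{align*}
everywhere. So this comparison can never contract: it returns $W_N-W_{N-1}$ times a factor at least one. The induction you sketch would need a separate argument controlling these factors against $(N/(N-1))^{p-1}$, including small-$N$ checks, and you do not supply it. The fallback, that some maximizing pivot with $m\ge2$ always exists for $N\ge2$, is unproved. Also, $W_2=\mathcal A(1,1)=4$, not $\rho$; this slip is harmless since $W_2-W_1=3\le4L_U$.

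The missing idea is the convention $U_0=0$ built into the statement. Because $\mathcal A(0,w)=w$, you may write $W_N=\mathcal A(U_0,W_N)$, treating the empty prefix as a degenerate comparison split. Then for $m=1$,
\begin{align*}
    W_{N+1}-W_N=\mathcal A(U_1,W_N)-\mathcal A(U_0,W_N)\le4(U_1-U_0)=4\le4L_UN^{p-1}.
\end{align*}
In both branches only the $u$-argument changes. The bound $\partial_u\mathcal A=2+2w/\sqrt{w^2+8uw}\le4$, combined with your first estimate, finishes the proof with no induction. This is exactly the paper's argument.
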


\begin{proof}
The first estimate is immediate for $m=1$. If $m\ge2$, put $r=\floor{\log_2(m-1)}$. Exact interpolation gives
\begin{align*}
    U_j=\rho^rF(j/2^r),
    \quad j=m-1,m,
\end{align*}
and hence
\begin{align*}
    U_m-U_{m-1}
    \le L_F\rho^r2^{-r}
    =L_F2^{r(p-1)}
    \le L_Um^{p-1}.
\end{align*}
Strict positivity follows from Corollary \ref{cor:balanced-optimality}.

The join satisfies $\mathcal A(0,w)=w$ and
\begin{align*}
    \partial_u\mathcal A(u,w)
    =2+\frac{2w}{\sqrt{w^2+8uw}}
    \le4.
\end{align*}
The split $m=1$ in \eqref{eq:obsf-reciprocal-dp} gives $W_{N+1}>W_N$. Choose a maximizing pivot $m\in\{1,\ldots,N\}$ for $W_{N+1}$. If $m\ge2$, the split $(m-1,N+1-m)$ is admissible for $W_N$; if $m=1$, use $\mathcal A(U_0,W_N)=W_N$. Therefore
\begin{align*}
    W_{N+1}-W_N
    &\le\mathcal A(U_m,W_{N+1-m})
    -\mathcal A(U_{m-1},W_{N+1-m})\\
    &\le4(U_m-U_{m-1})
    \le4L_UN^{p-1}.
\end{align*}

The reciprocal sandwich and $\phi_*\le U_N/N^p\le1$ give $\phi_*\le D_N\le\rho$. Thus
\begin{align*}
    |D_{N+1}-D_N|
    &\le\frac{W_{N+1}-W_N}{(N+1)^p}
    +W_N\left(\frac1{N^p}-\frac1{(N+1)^p}\right)\\
    &\le\frac{4L_U}{N}
    +\rho\left[1-\left(\frac{N}{N+1}\right)^p\right]
    \le\frac{4L_U+\rho p}{N}.
\end{align*}
Since $C_N=D_N^{-1}$ and $D_N,D_{N+1}\ge\phi_*$, the final estimate follows.
\end{proof}

\subsection{Proof of the sharp support inequality}\label{app:obsf-support-proof}

\begin{proof}[Proof of Lemma \ref{lem:obsf-support}]
By homogeneity, normalize $w=1$ and write $t=u>0$. Set
\begin{align*}
    z=\frac{1+\sqrt{1+8t}}{2}.
\end{align*}
Then $z^2-z-2t=0$, so $t=z(z-1)/2$ and $\mathcal A(t,1)=z^2$. With $x=1/z\in(0,1)$, the smallest admissible support coefficient is the supremum of
\begin{align*}
    h(x)
    \coloneqq\frac{\mathcal A(t,1)^q-1}{t^q}
    =2^q\frac{1-x^{2q}}{(1-x)^q}.
\end{align*}
Its logarithmic derivative is
\begin{align*}
    \frac{h'(x)}{qh(x)}
    =\frac{1}{1-x}-\frac{2x^{2q-1}}{1-x^{2q}},
\end{align*}
whose sign is that of $1-x^{2q-1}(2-x)$. Put $a=2q-1\in(0,1)$ and $g(x)=x^a(2-x)$. Since
\begin{align*}
    g'(x)=x^{a-1}\bigl(2a-(a+1)x\bigr),
\end{align*}
$g$ first increases and then decreases on $(0,1)$; moreover, $g(0)=0$, $g(1)=1$, and $g'(1)=a-1<0$. Thus $g(x)=1$ has exactly one interior solution $\xi_*$ in addition to the endpoint $1$, and $h$ has its unique maximum at $\xi_*$. The endpoint limits are
\begin{align*}
    \lim_{x\downarrow0}h(x)=2^q,
    \quad
    \lim_{x\uparrow1}h(x)=0,
\end{align*}
so $B_{\mathrm{sup}}=h(\xi_*)>2^q$. Back-substitution gives the unique positive equality ratio
\begin{align*}
    t=\frac{1-\xi_*}{2\xi_*^2}=\tau_*.
\end{align*}
Homogeneity yields the stated condition $u/w=\tau_*$. The boundary cases follow from $\mathcal A(0,w)=w$ and $\mathcal A(u,0)=2u$, together with $B_{\mathrm{sup}}>2^q$.
\end{proof}

\subsection{Support deficit and contact geometry}\label{app:obsf-contact-location}

\begin{proposition} \label{prop:obsf-deficit}
For every $N\ge1$,
\begin{align*}
    W_N^q\le B_{\mathrm{sup}}N,
    \quad
    C_N=N^p\eta^{\mathrm{F}}_N\ge c_*.
\end{align*}
Define $E_N\coloneqq B_{\mathrm{sup}}N-W_N^q\ge0$. If $m$ is any maximizing pivot for $W_N$ and $r=N-m$, then
\begin{align}\label{eq:obsf-deficit-decomposition}
    E_N=B_{\mathrm{sup}}(m-U_m^q)+E_r+\sigma(U_m,W_r),
\end{align}
where
\begin{align*}
    \sigma(u,w)\coloneqq B_{\mathrm{sup}}u^q+w^q-\mathcal A(u,w)^q,
\end{align*}
and all three terms on the right-hand side of \eqref{eq:obsf-deficit-decomposition} are nonnegative.
\end{proposition}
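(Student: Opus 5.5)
The plan is to prove the three claims in one induction. Everything rests on the identity behind \eqref{eq:obsf-deficit-decomposition}, which is pure bookkeeping and needs no inequality. Fix $N\ge2$ and a maximizing pivot $m$ with $r=N-m\ge1$, so that $W_N=\mathcal A(U_m,W_r)$ by \eqref{eq:obsf-reciprocal-dp}. By the definition of $\sigma$,
\begin{align*}
    W_N^q=\mathcal A(U_m,W_r)^q=B_{\mathrm{sup}}U_m^q+W_r^q-\sigma(U_m,W_r).
\end{align*}
Substituting $W_r^q=B_{\mathrm{sup}}r-E_r$ and $N=m+r$ into $E_N=B_{\mathrm{sup}}N-W_N^q$ then gives
\begin{align*}
    E_N=B_{\mathrm{sup}}m-B_{\mathrm{sup}}U_m^q+E_r+\sigma(U_m,W_r),
\end{align*}
which is exactly \eqref{eq:obsf-deficit-decomposition}. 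This identity holds for every split attaining the maximum, whatever the sign of its terms.

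The next step is to show that each term on the right is nonnegative.
\begin{itemize}
\item[] \emph{First term.} By Proposition \ref{prop:sharp-growth}, $U_m\le m^p$. Since $pq=1$, this gives $U_m^q\le m$, so $B_{\mathrm{sup}}(m-U_m^q)\ge0$.
\item[] \emph{Third term.} By Lemma \ref{lem:obsf-support} applied with $u=U_m\ge0$ and $w=W_r\ge0$, we get $\sigma(U_m,W_r)\ge0$.
\item[] \emph{Middle term.} $E_r\ge0$ follows from the induction hypothesis, since $r<N$.
\end{itemize}

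The base case is $N=1$. There $W_1=1$ and $E_1=B_{\mathrm{sup}}-1$, and this is positive because $B_{\mathrm{sup}}>2^q>1$ by Lemma \ref{lem:obsf-support}. Strong induction on $N$ therefore gives $E_N\ge0$, that is, $W_N^q\le B_{\mathrm{sup}}N$, for all $N$. Raising this to the power $p$ gives $W_N\le B_{\mathrm{sup}}^pN^p=d_*N^p$. Taking reciprocals gives $\eta^{\mathrm F}_N\ge c_*N^{-p}$, which is $C_N\ge c_*$.

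The induction needs $E_r\ge0$ at smaller indices before the decomposition can be read as a sum of nonnegative terms. So the induction statement should be $E_N\ge0$ itself, with the identity proved at each step as above; this ordering avoids circularity. I expect no real obstacle. The only points to check are that $r\ge1$ is always within the range of the hypothesis, and that the boundary case $u=0$ never arises, because $U_m\ge1$.
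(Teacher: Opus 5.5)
Your proposal is correct and follows essentially the paper's argument: a strong induction using $U_m^q\le m$ from Proposition \ref{prop:sharp-growth}, the support inequality of Lemma \ref{lem:obsf-support}, and the add-and-subtract identity for \eqref{eq:obsf-deficit-decomposition}. The only difference is ordering: you derive $E_N\ge0$ from the decomposition at one maximizing pivot, whereas the paper first bounds $\mathcal A(U_m,W_r)^q\le B_{\mathrm{sup}}N$ for every split, maximizes, and then records the identity.
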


\begin{proof}
The sharp symmetric power bound gives $U_m^q\le m$. Assuming $W_j^q\le B_{\mathrm{sup}}j$ below $N$, Lemma \ref{lem:obsf-support} gives, for every split $N=m+r$,
\begin{align*}
    \mathcal A(U_m,W_r)^q
    \le B_{\mathrm{sup}}U_m^q+W_r^q
    \le B_{\mathrm{sup}}m+B_{\mathrm{sup}}r=B_{\mathrm{sup}}N.
\end{align*}
Maximizing proves $W_N^q\le B_{\mathrm{sup}}N$; since $pq=1$, this is equivalent to $C_N\ge B_{\mathrm{sup}}^{-p}=c_*$. For a maximizing split, adding and subtracting $B_{\mathrm{sup}}U_m^q+W_r^q$ gives \eqref{eq:obsf-deficit-decomposition}.
\end{proof}

\begin{lemma} \label{lem:obsf-strict-S-phase}
For every $x\in(1,2)$,
\begin{align*}
    F(x)<x^p.
\end{align*}
Equivalently, $\Phi(t)<1$ for every nonzero phase $t\in(0,1)$.
\end{lemma}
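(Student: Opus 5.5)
We need to show $F(x)<x^p$ for every $x\in(1,2)$, including nondyadic $x$. The weak inequality $F\le x^p$ is already available from the proof of \cref{thm:phase}: density together with Proposition \ref{prop:sharp-growth}. The plan is to upgrade it to a strict inequality by exploiting the midpoint rule \eqref{eq:midpoint-rule} together with the strict form of Lemma \ref{lem:silver-power}.

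\textbf{Step 1: propagation of equality.} Let $G(x)\coloneqq x^p$. Suppose some $x_0\in(1,2)$ satisfies $F(x_0)=x_0^p$. Take any level-$k$ interval $[x,y]$, of length $2^{-k}$, whose midpoint is $m=(x+y)/2$. Combining $F\le G$, monotonicity of $\Kop$, Lemma \ref{lem:silver-power} and $pq=1$ gives
\begin{align*}
    F(m)=\rho^{-1}\Kop\bigl(F(x),F(y)\bigr)\le\rho^{-1}\Kop(x^p,y^p)\le\rho^{-1}(x+y)^p=m^p .
\end{align*}
The last inequality is strict because $x\ne y$ with both positive. So every dyadic point that is the midpoint of a dyadic interval inside $[1,2]$ satisfies $F<G$. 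Since every dyadic point in $(1,2)$ is such a midpoint, this settles the dyadic case directly; it is also Proposition \ref{prop:sharp-growth}, because a numerator $j$ with $2^k<j<2^{k+1}$ is not a power of two.

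\textbf{Step 2: the nondyadic case, via a quantitative gap.} A limiting argument alone does not give strictness here, so the main obstacle is a quantitative deficit that survives the limit. The tool is the ratio $\Gamma(x)\coloneqq F(x)/x^p$, which equals $\Phi(\log_2 x)$, is continuous, and is at most $1$. Lemma \ref{lem:silver-power} bounds the midpoint by
\begin{align*}
    \Gamma(m)\le \min\{\Gamma(x),\Gamma(y)\}\cdot\frac{\Kop(x^p,y^p)}{(x^q{}^{p}+y^{qp})^{1/q}}.
\end{align*}
Call the last factor $\kappa(x/y)$. It satisfies $\kappa<1$, but $\kappa\to1$ as $x/y\to1$, so it only yields a multiplicative deficit at each scale. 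Naively chaining these deficits along the binary expansion of a nondyadic $x$ gives a product of factors tending to $1$, and that product may fail to stay below $1$. This is where the argument gets delicate.

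\textbf{Step 3: the first thing to try, a derivative comparison at a hypothetical contact point.} Suppose instead that a nondyadic $x_0$ had $\Gamma(x_0)=1$. Then $x_0$ is an interior maximum of $F-G$. Since $F$ is differentiable at nondyadic points with $F'=D$ (\cref{thm:subdivision-derivative}), this forces $D(x_0)=px_0^{p-1}$. The plan is to contradict this using the uniform closeness of the binary-path products to the local slope $2^k\rho^{-k}(U_{N+1}-U_N)$. At level $k$, let $[a,b]$ be the cell containing $x_0$. The bound $F\le G$ at the endpoints, together with $F(x_0)=G(x_0)$, squeezes the secant slopes of $F$ on $[a,x_0]$ and $[x_0,b]$ between the corresponding secant slopes of the convex function $G$ ($p>1$). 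Substituting the midpoint rule for $F$ at the cell midpoint turns the step-1 strict deficit into a lower bound on the local gap $G-F$ at that midpoint, of order $2^{-2k}$: it is quadratic in $(y-x)/x$, through the second-order expansion of $\kappa$ in the log-ratio $s$, where the expansion of $G(s)$ from the proof of Lemma \ref{lem:silver-power} has a nonzero quadratic coefficient. On the other hand, the contact assumption combined with the Lipschitz bound on the slope fluctuation, of size $O(2^{-k})$ times the distance to $x_0$, gives an upper bound on that same gap of order $2^{-2k}$. If that second-order constant comparison fails to close, the fallback is to argue via $\Phi$ and the contraction maps $T_0,T_1$: track the normalized pair $(F(a)/a^p,\,R)$ along the expansion and show that the deficit $1-\Gamma$ obeys a recursion with a strictly positive additive term that never decays faster than it accumulates.
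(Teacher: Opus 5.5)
Your Step~1 is correct and is exactly how the paper handles interior dyadic points. The gap is the nondyadic case. It is the only part of the lemma not already contained in Proposition~\ref{prop:sharp-growth}, and your proposal never actually proves it. Step~3 compares a lower and an upper bound for $G-F$ at a cell midpoint, but both are $\Theta(2^{-2k})$. The argument therefore hinges on a comparison of constants that you do not carry out. There is reason to doubt it closes: the lower constant is governed by the small quadratic coefficient $\tfrac12(q-1/\sqrt2)\approx0.04$ of the Silver-power deficit. By contrast, the oscillation bound in the proof of \cref{thm:subdivision-derivative} only controls the relative variation of $D$ on a level-$k$ cell by $8M(\rho-1)2^{-k}=(2/\rho)2^{-k}$. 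The fallback recursion for $1-\Gamma$ is not specified at all.

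The missing idea is much simpler, and it is already latent in your Step~2, whose display is misstated. The ratio sandwich \eqref{eq:ratio-sandwich} gives $\Gamma(m)\le\max\{\Gamma(x),\Gamma(y)\}\,\kappa$, not the version with $\min$. The $\min$ version is false: on $[1,3/2]$ it would force $\Gamma(5/4)=U_5/5^p\approx0.9935$ below $\Gamma(3/2)=U_3/3^p\approx0.9932$. With $\max$ and $\kappa\le1$, nothing has to accumulate along the binary expansion. A single strict deficit at the two endpoints of one cell propagates, because the homogeneity of $\Kop$ makes the midpoint rule preserve any bound $F\le M\,G$.

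Concretely, for nondyadic $x_0\in(1,2)$ choose a dyadic cell $[a,b]\ni x_0$ with $1<a<b<2$. By Step~1, $M\coloneqq\max\{F(a)/a^p,F(b)/b^p\}<1$. Now suppose $F(c)\le Mc^p$ and $F(d)\le Md^p$ at the ends of a dyadic subcell. Monotonicity, homogeneity and Lemma~\ref{lem:silver-power} give $F\bigl(\tfrac{c+d}2\bigr)=\rho^{-1}\Kop(F(c),F(d))\le M\rho^{-1}\Kop(c^p,d^p)\le M\bigl(\tfrac{c+d}2\bigr)^p$. Induction over levels gives $F\le M\,G$ at every dyadic point of $[a,b]$. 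Density together with continuity of $F$ (\cref{thm:Lipschitz-F}) then yields $F(x_0)\le Mx_0^p<x_0^p$. This is the paper's proof; no derivative information or second-order expansion is needed.
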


\begin{proof}
Let an interior dyadic point $x$ first appear as the midpoint of a dyadic interval $[a,b]$, and set $H=\rho^{-1}\Kop$. The midpoint rule, endpoint power bounds, and monotonicity give
\begin{align*}
    F(x)=H(F(a),F(b))\le H(a^p,b^p)<\rho^{-1}(a+b)^p=x^p,
\end{align*}
where strictness follows from $a<b$ and Lemma \ref{lem:silver-power}. For nondyadic $x$, choose a sufficiently fine  dyadic cell $[a,b]$ containing it. With
\begin{align*}
    M=\max\left\{F(a)/a^p,F(b)/b^p\right\}<1,
\end{align*}
homogeneity and the same midpoint argument propagate $F(y)\le My^p$ to all dyadic descendants. Density and continuity yield $F(x)\le Mx^p<x^p$.
\end{proof}

Set
\begin{align*}
    r_*\coloneqq\tau_*^q,
    \quad
    \alpha_*\coloneqq\frac{B_{\mathrm{sup}}r_*}{1+B_{\mathrm{sup}}r_*},
    \quad
    x_*\coloneqq2\alpha_*.
\end{align*}

\begin{lemma} \label{lem:obsf-contact-fraction}
The contact fraction satisfies
\begin{align}\label{eq:obsf-contact-identity}
    \alpha_*=1-\xi_*^{2q},
    \quad
    1-\alpha_*=\xi_*^{2q},
\end{align}
and
\begin{align}\label{eq:obsf-contact-interval}
    \frac14<1-\alpha_*<\frac12.
\end{align}
In particular, $1/2<\alpha_*<1$ and $x_*\in(1,2)$.
\end{lemma}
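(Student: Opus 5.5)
The plan is to prove both parts by direct substitution into the definitions of $\tau_*$, $B_{\mathrm{sup}}$ and $r_*$ from Lemma \ref{lem:obsf-support}. For the interval we will use the sign structure of $g(x)=x^{2q-1}(2-x)$ already established in the proof of that lemma (Appendix \ref{app:obsf-support-proof}).

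\emph{Identity \eqref{eq:obsf-contact-identity}.} First compute
\begin{align*}
    r_*=\tau_*^q=\frac{(1-\xi_*)^q}{2^q\xi_*^{2q}}.
\end{align*}
Multiplying by $B_{\mathrm{sup}}=2^q(1-\xi_*^{2q})/(1-\xi_*)^q$ cancels both the factor $2^q$ and the factor $(1-\xi_*)^q$, which gives
\begin{align*}
    B_{\mathrm{sup}}r_*=\frac{1-\xi_*^{2q}}{\xi_*^{2q}}.
\end{align*}
Hence $1+B_{\mathrm{sup}}r_*=\xi_*^{-2q}$, and therefore $\alpha_*=1-\xi_*^{2q}$. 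The complementary identity $1-\alpha_*=\xi_*^{2q}$ follows at once.

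\emph{Interval \eqref{eq:obsf-contact-interval}.} Since $1-\alpha_*=\xi_*^{2q}$ and $pq=1$, $2^p=\rho$, the bound $1/4<\xi_*^{2q}<1/2$ is equivalent to
\begin{align*}
    \rho^{-1}<\xi_*<\rho^{-1/2}.
\end{align*}
The proof of Lemma \ref{lem:obsf-support} shows that $g$ increases and then decreases on $(0,1)$, with $g(0)=0$ and $g(1)=1$, and that $\xi_*$ is the unique interior solution of $g=1$. Consequently $g<1$ on $(0,\xi_*)$ and $g>1$ on $(\xi_*,1)$. It therefore suffices to check $g(\rho^{-1})<1$ and $g(\rho^{-1/2})>1$, using $x^{2q-1}=x^{2q}/x$ in each case:
\begin{align*}
    g(\rho^{-1})&=\tfrac14\,\rho\bigl(2-\rho^{-1}\bigr)=\frac{2\rho-1}{4}=\frac{1+2\sqrt2}{4}<1,\\
    g(\rho^{-1/2})&=\tfrac12\sqrt\rho\bigl(2-\rho^{-1/2}\bigr)=\sqrt\rho-\tfrac12>1.
\end{align*}
The first inequality holds because $2\sqrt2<3$. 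The second holds because $\rho>9/4$, so $\sqrt\rho>3/2$.

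\emph{Consequences.} The interval gives $1/2<\alpha_*<3/4<1$, and hence $x_*=2\alpha_*\in(1,3/2)\subset(1,2)$. The only delicate point in the argument is the use of the sign pattern of $g-1$ on either side of $\xi_*$, and that pattern is exactly what was established in the proof of Lemma \ref{lem:obsf-support}.
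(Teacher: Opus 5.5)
Your proof is correct. The identity part is exactly the paper's computation. For the interval, however, you take a genuinely different and cleaner route.

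\emph{Your route.} You test $g(x)=x^{2q-1}(2-x)$ at $\rho^{-1}$ and $\rho^{-1/2}$. Because $\rho^q=2$, these are precisely the preimages of $1/4$ and $1/2$ under $x\mapsto x^{2q}$. The irrational power is therefore evaluated exactly, and all that remains is $2\sqrt2<3$ and $\rho>9/4$. Moreover, the bracket $\rho^{-1}<\xi_*<\rho^{-1/2}$ is literally equivalent to the claim.

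\emph{The paper's route.} The paper instead brackets $\xi_*$ between the rational points $12/25$ and $1/2$. This requires rational bounds $11/14<q<15/19$ and $q<4/5$, verified via $\rho^{15}>2^{19}$, $\rho^{11}<2^{14}$ and $\rho^4>2^5$. It also needs integer comparisons such as $2^{30}<3^{19}$, $12^4 38^7<25^{11}$ and $12^8 4^5>25^8$. It then converts these to bounds on $\xi_*^{2q}$ through the cruder estimates $\xi_*^{2q}<\xi_*$ and $\xi_*^{2q}>\xi_*^{8/5}$.

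\emph{Comparison.} What the paper's route buys is a sharper localization of $\xi_*$ itself: $0.48<\xi_*<0.5$, rather than roughly $0.414<\xi_*<0.644$. That extra precision is not needed for this lemma. Both arguments rest on the same sign pattern of $g-1$ on either side of $\xi_*$, taken from the proof of the support lemma. You state and use that pattern correctly: $g$ is unimodal on $(0,1)$ with $g(0)=0$ and $g(1)=1$, since $0<2q-1<1$.
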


\begin{proof}
The definitions of $B_{\mathrm{sup}}$ and $\tau_*$ give
\begin{align*}
    B_{\mathrm{sup}}\tau_*^q
    =2^q\frac{1-\xi_*^{2q}}{(1-\xi_*)^q}
    \frac{(1-\xi_*)^q}{2^q\xi_*^{2q}}
    =\frac{1-\xi_*^{2q}}{\xi_*^{2q}}.
\end{align*}
Since $r_*=\tau_*^q$ and $\alpha_*=B_{\mathrm{sup}}r_*/(1+B_{\mathrm{sup}}r_*)$, this proves \eqref{eq:obsf-contact-identity}.

We next locate $\xi_*$. The inequalities $q<15/19$ and $2^{30}<3^{19}$ imply $2^{2q}<3$. Indeed, $q<15/19$ is equivalent to $\rho^{15}>2^{19}$, and
\begin{align*}
    \rho^{15}=275807+195025\sqrt2>2^{19}.
\end{align*}
Consequently,
\begin{align*}
    (1/2)^{2q-1}(2-1/2)=3/2^{2q}>1.
\end{align*}
Because $\xi_*$ is the first interior solution of \eqref{eq:obsf-xi-equation}, $\xi_*<1/2$. Since $2q>1$,
\begin{align*}
    1-\alpha_*=\xi_*^{2q}<\xi_*<\frac12.
\end{align*}

For the other direction, $q>11/14$ and $q<4/5$, equivalently $\rho^{11}<2^{14}$ and $\rho^4>2^5$. At $x=12/25$, these bounds give
\begin{align*}
    x^{2q-1}(2-x)
    <x^{4/7}\frac{38}{25}<1,
\end{align*}
where the last inequality is equivalent to $12^4 38^7<25^{11}$. Hence $\xi_*>12/25$. Since $2q<8/5$,
\begin{align*}
    \xi_*^{2q}>\xi_*^{8/5}>\left(\frac{12}{25}\right)^{8/5}>\frac14;
\end{align*}
the last inequality is equivalent to $12^8 4^5>25^8$. Together with \eqref{eq:obsf-contact-identity}, this proves \eqref{eq:obsf-contact-interval} and the remaining assertions.
\end{proof}

\subsection{Deficit concentration and nonconstancy}\label{app:obsf-deficit-concentration}

\begin{theorem} \label{thm:obsf-dyadic-strict}
The dyadic-ray limit satisfies
\begin{align*}
    C_1^F=\lim_{k\to\infty}(2^k)^p\eta^{\mathrm{F}}_{2^k}>c_*.
\end{align*}
\end{theorem}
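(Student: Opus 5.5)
We will argue by contradiction: assume $C_1^F=c_*$. Since $C_N\ge c_*$ is equivalent to $E_N\ge0$ (Proposition~\ref{prop:obsf-deficit}), and $C_{2^k}$ decreases to $C_1^F$, the assumption means that the normalized deficit $\epsilon_k\coloneqq E_{2^k}/2^k$ tends to $0$. Indeed $W_N^q=N/C_N^q$, so $E_N/N=B_{\mathrm{sup}}-C_N^{-q}$ and $C_N\to c_*$ along a sequence is the same as $E_N/N\to0$.

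\textbf{Unrolling the right spine.} Starting from $N_0=2^k$, we pick a maximizing pivot $m_1$ and set $N_1=N_0-m_1$, then a maximizing pivot $m_2$ for $W_{N_1}$, and so on until we reach $W_1$. Iterating \eqref{eq:obsf-deficit-decomposition} gives
\begin{align*}
    E_{2^k}=\sum_i\Bigl[B_{\mathrm{sup}}\bigl(m_i-U_{m_i}^q\bigr)+\sigma\bigl(U_{m_i},W_{N_i}\bigr)\Bigr]+E_1 .
\end{align*}
Every term is nonnegative, and $E_1=B_{\mathrm{sup}}-1>0$ is fixed. After dividing by $2^k$, each term is at most $\epsilon_k\to0$.

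\textbf{Extracting a macroscopic pivot.} The spine has to carry a total mass of $2^k$. We will show that the first pivot $m_1$ is macroscopic, so that $m_1/2^k\to\alpha_*$ along a subsequence. From the first spine step we get two facts. First, $\sigma(U_{m_1},W_{N_1})=o(2^k)$; by homogeneity and the strict uniqueness of the contact ratio $\tau_*$ in Lemma~\ref{lem:obsf-support}, this forces $U_{m_1}/W_{N_1}\to\tau_*$. Second, $m_1-U_{m_1}^q=o(2^k)$. Next we combine $U_m\approx m^p\Phi$, $W_r\approx r^p/c_*$ (from $E_r$ small), and $W_r^q\approx B_{\mathrm{sup}}r$. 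Together with $U_m^q\approx m$, these give the ratio $m_1/N_1\to 1/(B_{\mathrm{sup}}r_*)$, that is $m_1/2^k\to\alpha_*$. The nondegenerate case is guaranteed because the contact condition $u/w=\tau_*$ with $u,w>0$ excludes $m_1=o(2^k)$ whenever $E_{N_1}/N_1$ is also small. To make this airtight we will need a compactness argument along the spine: either a positive fraction of the mass lies in pivots of size $o(2^k)$, and then each of them has a ratio $U/W\to0$ far from $\tau_*$, producing a $\sigma$-deficit proportional to its mass, which contradicts $\epsilon_k\to0$; or a macroscopic pivot exists.

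\textbf{Deriving the contradiction from the phase.} With $m_1/2^k\to\alpha_*$, the vanishing of $(m_1-U_{m_1}^q)/2^k$ gives $U_{m_1}/m_1^p\to1$. By the exact phase law, this says $\Phi(\{\log_2 m_1\})\to1$. Since $\{\log_2 m_1\}\to\log_2 x_*$ with $x_*=2\alpha_*\in(1,2)$ by Lemma~\ref{lem:obsf-contact-fraction}, continuity of $\Phi$ yields $\Phi(\log_2x_*)=1$. This contradicts Lemma~\ref{lem:obsf-strict-S-phase}, because $x_*\in(1,2)$ is interior.

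\textbf{Main obstacle.} The hard step is the concentration argument: turning "total normalized deficit $\to0$" into "the first pivot fraction converges to $\alpha_*$". In particular we must rule out spines made of many small pivots. For this we plan to use a quantitative form of the support lemma: $\sigma(u,w)\ge c\,u^q$ whenever $u/w\le\tau_*/2$. This should follow from the strict inequality together with compactness in the normalized variable, and it makes small pivots pay a deficit proportional to their leaf count.
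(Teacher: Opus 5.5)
Your overall route is the paper's: contradiction, unrolling a maximizing right spine through the deficit identity, a linear support deficit for small pivots, equality in Lemma~\ref{lem:obsf-support} forcing the split fraction $\alpha_*$, and a contradiction with Lemma~\ref{lem:obsf-strict-S-phase} at the interior mantissa $x_*=2\alpha_*$. The concentration step, however, does not work as you have set it up.

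\textbf{The gap: the macroscopic pivot is not $m_1$.} Nothing in your tools forces the first pivot $m_1$ to be macroscopic. A maximizing pivot with $m_1=o(2^k)$ costs only $\sigma\ge c_0m_1=o(2^k)$, so $E_{2^k}/2^k\to0$ does not exclude it.

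Your fallback dichotomy measures smallness in absolute terms: pivots of size $o(2^k)$ have $U/W\to0$. That is false once the spine itself has shrunk to size $o(2^k)$, because late small pivots can have any ratio.

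Knowing merely that \emph{some} macroscopic pivot exists is also not enough. If it appeared at a spine node $n_j\approx\lambda 2^k$ with $\lambda<1$, its phase would tend to $\log_2(\lambda\alpha_*)$ modulo one. That value could be zero, and then no contradiction follows.

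\textbf{The fix.} As in the paper, use a threshold relative to the current spine size:
\begin{itemize}
\item Choose $\varepsilon$ so that $m_i/n_i\le\varepsilon$ forces $U_{m_i}/W_{n_{i+1}}\le\delta$. This uses $U_m\le m^p$ and $W_r\ge U_r\ge\phi_* r^p$.
\item Then every such pivot pays $\sigma\ge c_0 m_i$.
\item Let $j$ be the first index with $m_j/n_j>\varepsilon$.
\item Then $\sum_{i<j}m_i\le E_{2^k}/c_0=o(2^k)$, so $n_j/2^k\to1$.
\item Run all subsequent steps on $m_j$ instead of $m_1$. It is the limit $n_j/2^k\to1$ that makes $m_j/2^k\to\alpha_*$, and hence $\{\log_2 m_j\}\to\log_2 x_*$.
\end{itemize}
Your quantitative bound $\sigma(u,w)\ge c\,u^q$ for $u/w\le\tau_*/2$ is valid (continuity plus $\sigma(t,1)/t^q\to B_{\mathrm{sup}}$ as $t\downarrow0$). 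It serves this purpose, but only with the relative threshold.

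\textbf{Two smaller points.}
\begin{itemize}
\item The uniqueness of the contact ratio in Lemma~\ref{lem:obsf-support} covers only $u,w>0$. You must separately exclude the limit fraction $\alpha=1$, where the continuation has size $o(2^k)$. This follows from strictness at $w=0<u$, that is, $B_{\mathrm{sup}}>2^q$. Only once $\alpha<1$ is excluded do you get $E_{n_{j+1}}/n_{j+1}\to0$, which you need for $W^q_{n_{j+1}}\approx B_{\mathrm{sup}}n_{j+1}$.
\item Your ratio is inverted. From $U^q\approx m_j$, $W^q\approx B_{\mathrm{sup}}n_{j+1}$ and $U^q/W^q\to r_*$ you get $m_j/n_{j+1}\to B_{\mathrm{sup}}r_*$, not $1/(B_{\mathrm{sup}}r_*)$. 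The latter would give fraction $1-\alpha_*$, which contradicts your own use of $x_*=2\alpha_*$. It would not break the contradiction, since $1-\alpha_*\in(1/4,1/2)$ is also not a power of two.
\end{itemize}
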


\begin{proof}
Suppose instead that $C_1^F=c_*$. For $N=2^k$, this is equivalent to
\begin{align}\label{eq:obsf-dyadic-zero-deficit-app}
    \frac{W_N}{N^p}\to d_*=B_{\mathrm{sup}}^p,
    \quad
    \frac{E_N}{N}
    =B_{\mathrm{sup}}-\left(\frac{W_N}{N^p}\right)^q
    \to0.
\end{align}
Starting at $n_0=N$, choose a maximizing pivot $m_i$ for $W_{n_i}$ and set $n_{i+1}=n_i-m_i$ until $n_L=1$. Iterating \eqref{eq:obsf-deficit-decomposition} gives
\begin{align}\label{eq:obsf-spine-deficit-app}
    E_N
    =E_1+
    \sum_{i=0}^{L-1}
    \left[B_{\mathrm{sup}}(m_i-U_{m_i}^q)+\sigma(U_{m_i},W_{n_{i+1}})\right],
\end{align}
with every summand nonnegative.

Small pivots incur a linear support deficit. Since $q<1$ and $\mathcal A(t,1)=1+O(t)$ as $t\downarrow0$,
\begin{align*}
    \frac{\sigma(t,1)}{t^q}\to B_{\mathrm{sup}}.
\end{align*}
By homogeneity, there is $\delta>0$ such that
\begin{align*}
    0<\frac{u}{w}\le\delta
    \quad\Longrightarrow\quad
    \sigma(u,w)\ge\frac{B_{\mathrm{sup}}}{2}u^q.
\end{align*}
The sandwich and symmetric phase bounds give
\begin{align*}
    W_r\ge U_r\ge\phi_*r^p,
    \quad
    U_m\le m^p,
    \quad
    U_m^q\ge\phi_*^qm.
\end{align*}
Choose $\varepsilon>0$ so that
\begin{align*}
    \phi_*^{-1}\left(\frac{\varepsilon}{1-\varepsilon}\right)^p\le\delta.
\end{align*}
If $m_i/n_i\le\varepsilon$, then $n_{i+1}\ge(1-\varepsilon)n_i$ and $U_{m_i}/W_{n_{i+1}}\le\delta$. Thus
\begin{align}\label{eq:obsf-small-pivot-cost-app}
    \sigma(U_{m_i},W_{n_{i+1}})
    \ge c_0m_i,
    \quad
    c_0\coloneqq\frac{B_{\mathrm{sup}}}{2}\phi_*^q>0.
\end{align}

Let $j$ be the first index with $m_j/n_j>\varepsilon$. Such an index exists for all large $N$; otherwise \eqref{eq:obsf-spine-deficit-app}, \eqref{eq:obsf-small-pivot-cost-app}, and $\sum_i m_i=N-1$ would contradict \eqref{eq:obsf-dyadic-zero-deficit-app}. Moreover,
\begin{align*}
    \sum_{i<j}m_i\le\frac{E_N}{c_0}=o(N),
    \quad
    \frac{n_j}{N}\to1.
\end{align*}
Nonnegativity in \eqref{eq:obsf-spine-deficit-app} then gives
\begin{align}\label{eq:obsf-macro-zero-deficits-app}
    m_j-U_{m_j}^q=o(N),
    \quad
    E_{n_{j+1}}=o(N),
    \quad
    \sigma(U_{m_j},W_{n_{j+1}})=o(N).
\end{align}

Pass to a subsequence with $m_j/n_j\to\alpha\in[\varepsilon,1]$. The preceding relations imply
\begin{align*}
    \frac{U_{m_j}^q}{n_j}\to\alpha,
    \quad
    \frac{W_{n_{j+1}}^q}{n_j}\to B_{\mathrm{sup}}(1-\alpha).
\end{align*}
Because $\sigma$ is homogeneous of degree $q$ and $pq=1$, its vanishing limit becomes
\begin{align*}
    B_{\mathrm{sup}}\alpha+B_{\mathrm{sup}}(1-\alpha)
    -\mathcal A\bigl(\alpha^p,[B_{\mathrm{sup}}(1-\alpha)]^p\bigr)^q=0.
\end{align*}
The case $\alpha=1$ is impossible because $B_{\mathrm{sup}}>2^q$. Equality in Lemma \ref{lem:obsf-support} therefore forces
\begin{align*}
    \frac{\alpha^p}{[B_{\mathrm{sup}}(1-\alpha)]^p}=\tau_*,
    \quad\text{hence}\quad
    \alpha=\frac{B_{\mathrm{sup}}r_*}{1+B_{\mathrm{sup}}r_*}=\alpha_*.
\end{align*}
Thus every convergent subsequence has the same limit, and $m_j/N\to\alpha_*\in(1/2,1)$.

Since $N=2^k$, eventually $2^{k-1}<m_j<2^k$ and $2m_j/N\to x_*=2\alpha_*\in(1,2)$. Exact interpolation on $[2^{k-1},2^k]$ gives
\begin{align*}
    \frac{U_{m_j}^q}{m_j}
    =\frac{F(2m_j/N)^q}{2m_j/N}.
\end{align*}
The left-hand side tends to one by \eqref{eq:obsf-macro-zero-deficits-app}. Passage to the limit yields $F(x_*)^q=x_*$, or $F(x_*)=x_*^p$, contradicting Lemma \ref{lem:obsf-strict-S-phase}. Therefore $C_1^F>c_*$.
\end{proof}

\begin{lemma} \label{lem:obsf-constant-rigidity}
If the phase profile in Theorem \ref{thm:obsf-continuous-phase} were constant, then it would be identically equal to $c_*$.
\end{lemma}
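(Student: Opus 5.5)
The plan is to play the Bellman recursion \eqref{eq:obsf-reciprocal-dp} against the assumed constant phase, using the symmetric scalar only at its exact dyadic points. Suppose $\Psi_F\equiv c$. By \cref{thm:obsf-continuous-phase}, $C_N=N^p\eta^{\mathrm F}_N\to c$ uniformly over dyadic blocks, which here simply means $C_N\to c$. Hence $D_N=W_N/N^p\to d\coloneqq 1/c$, and $d>0$ by the sandwich \eqref{eq:obsf-sandwich}. Proposition \ref{prop:obsf-deficit} already gives $C_N\ge c_*$ for every $N$, so $c\ge c_*$. It remains to prove the reverse inequality $d\ge d_*=B_{\mathrm{sup}}^p$.

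\textbf{Step 1: a limiting Bellman inequality.} Fix $\alpha\in(0,1)$. For each large $j$, take pivots $m=2^j$, where the symmetric phase is exact: $U_{2^j}=(2^j)^p$ by Proposition \ref{prop:sharp-growth}. Choose $N_j\coloneqq\ceil{2^j/\alpha}$, so that $m/N_j\to\alpha$ and $r_j\coloneqq N_j-2^j\to\infty$ with $r_j/N_j\to1-\alpha$. The recursion gives $W_{N_j}\ge\mathcal A(U_{2^j},W_{r_j})$. Dividing by $N_j^p$ and using the positive homogeneity of $\mathcal A$,
\begin{align*}
    D_{N_j}\ge\mathcal A\!\left(\Bigl(\tfrac{2^j}{N_j}\Bigr)^p,\;D_{r_j}\Bigl(\tfrac{r_j}{N_j}\Bigr)^p\right).
\end{align*}
Continuity of $\mathcal A$ and $D_N\to d$ then yield
\begin{align*}
    d\ge\mathcal A\bigl(\alpha^p,\,d(1-\alpha)^p\bigr)\quad\text{for every }\alpha\in(0,1).
\end{align*}
The only delicacy is that both $N_j$ and $r_j$ tend to infinity, so the convergence of $D$ applies at both indices. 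This holds because $\alpha<1$ is fixed.

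\textbf{Step 2: saturate the support inequality.} The map $\alpha\mapsto\alpha^p/\bigl(d(1-\alpha)^p\bigr)$ is continuous and increasing from $0$ to $\infty$ on $(0,1)$. Hence there is $\alpha\in(0,1)$ at which this ratio equals $\tau_*$. At that $\alpha$, the equality case of Lemma \ref{lem:obsf-support} gives
\begin{align*}
    \mathcal A\bigl(\alpha^p,d(1-\alpha)^p\bigr)^q=B_{\mathrm{sup}}\alpha+d^q(1-\alpha).
\end{align*}
Raising the inequality of Step 1 to the power $q>0$ gives $d^q\ge B_{\mathrm{sup}}\alpha+d^q(1-\alpha)$, that is, $\alpha d^q\ge\alpha B_{\mathrm{sup}}$. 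Since $\alpha>0$, this means $d\ge B_{\mathrm{sup}}^p=d_*$, so $c\le c_*$. Together with $c\ge c_*$ we get $\Psi_F\equiv c_*$.

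\textbf{Main obstacle.} The key idea is Step 1: the pivots must be restricted to powers of two, so that the unknown symmetric phase $\Phi$ drops out (it equals $1$ there), while the constancy of $\Psi_F$ removes any dependence on the phase of $N_j$ or $r_j$. Everything else is the homogeneity of $\mathcal A$ and the equality case of the support inequality. No compactness beyond the convergence $D_N\to d$ is needed.
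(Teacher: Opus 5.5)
Your proof is correct and follows essentially the same route as the paper: you take power-of-two pivots so that $U$ is exact, pass to the limiting inequality $d\ge\mathcal A(\alpha^p,d(1-\alpha)^p)$, and saturate the support inequality at the ratio $\tau_*$. The differences are only cosmetic. You reach every $\alpha\in(0,1)$ directly via $N_j=\lceil 2^j/\alpha\rceil$, using that a constant profile forces $C_N\to c$ along all $N$, whereas the paper works along fixed rays $(2^h+b)2^k$, extends to all $\alpha$ by density, and phrases the final step as a contradiction.
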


\begin{proof}
Suppose $\Psi_F\equiv c$ and set $d=c^{-1}$. The support bound gives $d\le d_*$. Let $h \geq 0$ and $b\geq 1$ be arbitrary integers, and set $a=2^h$. For each $k\geq 0$, consider the Bellman recursion for $W_{(a+b)2^k}$ and  use the admissible pivot $a2^k$. Exact scaling of $U$ and the constant-profile asymptotics give, after division by $2^{kp}$ and passage to the limit,
\begin{align*}
    d(a+b)^p\ge\mathcal A(a^p,db^p).
\end{align*}
By homogeneity, with $\alpha=a/(a+b)$,
\begin{align}\label{eq:obsf-constant-bellman}
    d\ge\mathcal A(\alpha^p,d(1-\alpha)^p).
\end{align}
The set  $\{b/2^h: h \geq 0,b\geq 1\}$ is dense in $(0,\infty)$, so \eqref{eq:obsf-constant-bellman} holds for every $\alpha\in[0,1]$.

If $d^q<B_{\mathrm{sup}}$, choose $\alpha=d^qr_*/(1+d^qr_*)$. The two positive arguments in \eqref{eq:obsf-constant-bellman} then have equality ratio $\tau_*$. By Lemma \ref{lem:obsf-support}, the joined value has $q$-th power
\begin{align*}
    B_{\mathrm{sup}}\alpha+d^q(1-\alpha)>d^q,
\end{align*}
contradicting \eqref{eq:obsf-constant-bellman}. Hence $d^q\ge B_{\mathrm{sup}}$, so $d\ge d_*$. Therefore $d=d_*$ and $c=c_*$.
\end{proof}

The preceding two results imply that $\Psi_F$ is nonconstant: a constant profile would equal $c_*$ by Lemma \ref{lem:obsf-constant-rigidity}, contradicting its strictly larger value at the dyadic phase from \cref{thm:obsf-dyadic-strict}.

\subsection{Global strict separation}\label{app:obsf-global-separation}

\begin{theorem} \label{thm:obsf-global-strict}
For every $t\in\mathbb T$,
\begin{align*}
    \Psi_F(t)>c_*.
\end{align*}
Consequently, $\min_{t\in\mathbb T}\Psi_F(t)>c_*$.
\end{theorem}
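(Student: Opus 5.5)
We argue by contradiction and generalize the proof of Theorem~\ref{thm:obsf-dyadic-strict} from the dyadic ray to an arbitrary phase. Suppose $\Psi_F(t_0)=c_*$ for some $t_0\in\mathbb T$. The support bound of Proposition~\ref{prop:obsf-deficit} gives $\Psi_F\ge c_*$ everywhere, so $t_0$ is a global minimizer. Take $N_k=\floor{2^{k+t_0}}$. By the uniform convergence in Theorem~\ref{thm:obsf-continuous-phase}, $C_{N_k}\to c_*$, which is equivalent to $E_{N_k}/N_k\to0$. Theorem~\ref{thm:obsf-dyadic-strict} already excludes $t_0=0$, so we may assume $t_0\in(0,1)$.

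We follow the optimizing right spine from $n_0=N_k$ and expand the deficit through \eqref{eq:obsf-deficit-decomposition}. As in the dyadic proof, small pivots pay the linear support cost \eqref{eq:obsf-small-pivot-cost-app}. Hence there is a first macroscopic pivot $m_j$ with $m_j/N_k\to\alpha_*$, with $m_j-U_{m_j}^q=o(N_k)$ and with $E_{n_{j+1}}=o(N_k)$. This part of the argument does not use the dyadic structure and should transfer verbatim.

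The new ingredient is the phase conclusion. The condition $U_{m_j}^q/m_j\to1$ forces $\Phi(\{\log_2 m_j\})\to1$, by \eqref{eq:exact-phase} and the continuity of $\Phi$. By Lemma~\ref{lem:obsf-strict-S-phase}, the only phase where $\Phi=1$ is $0$, so $\{\log_2 m_j\}\to0$. On the other hand, $m_j/N_k\to\alpha_*$, so $\{\log_2 m_j\}\to\{t_0+\log_2\alpha_*\}$. This forces $t_0\equiv-\log_2\alpha_*\pmod 1$, that is, $2^{t_0}=1/\alpha_*\in(1,2)$ by Lemma~\ref{lem:obsf-contact-fraction}. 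So contact can occur at most at the single phase $t_1\coloneqq-\log_2\alpha_*$.

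To exclude $t_1$, we iterate the argument. The remainder $n_{j+1}=N_k-m_j$ satisfies $E_{n_{j+1}}/n_{j+1}\to0$, because $n_{j+1}/N_k\to1-\alpha_*>1/4$. Therefore the remainder is itself a contact sequence, and its phase is $\{t_0+\log_2(1-\alpha_*)\}$. By the previous paragraph this phase must also equal $t_1$. That requires $\log_2(1-\alpha_*)\in\mathbb Z$. This is impossible because $1/4<1-\alpha_*<1/2$ by \eqref{eq:obsf-contact-interval}. Hence $\Psi_F>c_*$ everywhere, and compactness of $\mathbb T$ with continuity of $\Psi_F$ gives $\min\Psi_F>c_*$.

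The main obstacle is making the remainder step rigorous. The argument must show that a vanishing normalized deficit along a sequence $n\to\infty$ forces the same first-pivot analysis. We also need the phase of that sequence to converge, which may require passing to subsequences and using the uniform-in-phase convergence of $C_n$. I would first restate the dyadic argument as a lemma: for any sequence $n_i\to\infty$ with $E_{n_i}/n_i\to0$, every limit phase of $n_i$ equals $t_1$. The theorem then follows from applying this lemma to $N_k$ and to the remainders $n_{j+1}$.
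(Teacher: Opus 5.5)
Your proposal is correct and follows essentially the same route as the paper. Contact at $t_0$ forces a first macroscopic pivot with fraction $\alpha_*$ and zero symmetric phase, so $t_0+\log_2\alpha_*=0$ in $\mathbb T$; rerunning the concentration argument on the continuation gives $t_0+\log_2(1-\alpha_*)+\log_2\alpha_*=0$ in $\mathbb T$, contradicting $1/4<1-\alpha_*<1/2$. The paper likewise packages that argument as applying to any sequence with $E_n/n\to0$, and the only slip is cosmetic: the continuation is $n_{j+1}=N_k-b_k-m_j$ with $b_k=\sum_{i<j}m_i=o(N_k)$, not $N_k-m_j$, which leaves its limiting phase unchanged.
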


\begin{proof}
By Proposition~\ref{prop:obsf-deficit} and the uniform phase convergence in Theorem~\ref{thm:obsf-continuous-phase}, $\Psi_F(t)\ge c_*$ for every $t\in\mathbb T$. Suppose, for a contradiction, that $\Psi_F(t_0)=c_*$ for some $t_0\in\mathbb T$. Choose integers $N_k\to\infty$ such that $\{\log_2N_k\}\to t_0$ in $\mathbb T$. Then
\begin{align*}
  \frac{W_{N_k}}{N_k^p}\to d_*,
  \quad
  \frac{E_{N_k}}{N_k}
  =B_{\mathrm{sup}}-
    \left(\frac{W_{N_k}}{N_k^p}\right)^q
  \to0.
\end{align*}

We first make explicit the concentration argument underlying estimate \eqref{eq:obsf-small-pivot-cost-app}, which applies to any integer sequence with $E_{N_k}/N_k\to0$. Along a maximizing right spine, write
\begin{align*}
  n_{k,0}=N_k,\quad
  n_{k,i+1}=n_{k,i}-m_{k,i},\quad n_{k,L_k}=1.
\end{align*}
The deficit identity of Proposition~\ref{prop:obsf-deficit} gives
\begin{align*}
 E_{N_k}=E_1+
 \sum_{i=0}^{L_k-1}
 \left[
 B_{\mathrm{sup}}\bigl(m_{k,i}-U_{m_{k,i}}^q\bigr)
 +\sigma\bigl(U_{m_{k,i}},W_{n_{k,i+1}}\bigr)
 \right],
\end{align*}
and every term is nonnegative. The small-pivot estimate \eqref{eq:obsf-small-pivot-cost-app} supplies constants $\varepsilon\in(0,1)$ and $c_0>0$, independent of the initial size, such that
\begin{align*}
 \frac{m_{k,i}}{n_{k,i}}\le\varepsilon
 \quad\implies\quad
 \sigma\bigl(U_{m_{k,i}},W_{n_{k,i+1}}\bigr)
 \ge c_0m_{k,i}.
\end{align*}
For all sufficiently large $k$, there must therefore be a first index $j_k$ with $m_{k,j_k}/n_{k,j_k}>\varepsilon$. Otherwise, $E_{N_k}\ge c_0(N_k-1)$, contradicting $E_{N_k}/N_k\to0$. Set
\begin{align*}
 b_k\coloneqq \sum_{i<j_k}m_{k,i},\quad
 s_k\coloneqq n_{k,j_k}=N_k-b_k,\quad
 m_k\coloneqq m_{k,j_k},\quad
 r_k\coloneqq n_{k,j_k+1}=s_k-m_k.
\end{align*}
In particular, $r_k$ is the actual continuation size after the first macroscopic pivot. Nonnegativity and the small-pivot estimate yield
\begin{align*}
 0\le b_k\le\frac{E_{N_k}}{c_0}=o(N_k),
 \quad \frac{s_k}{N_k}\to1.
\end{align*}
Applying the deficit identity at $s_k$, and observing that the deficit at any later spine node is at most $E_{N_k}$, also gives
\begin{align*}
 m_k-U_{m_k}^q=o(N_k),\quad
 E_{r_k}=o(N_k),\quad
 \sigma(U_{m_k},W_{r_k})=o(N_k).
\end{align*}

Consider any subsequence on which $m_k/s_k\to\beta\in[\varepsilon,1]$. Since $r_k=s_k-m_k$, the preceding estimates imply
\begin{align*}
 \frac{U_{m_k}^q}{s_k}\to\beta,
 \quad
 \frac{W_{r_k}^q}{s_k}
 =B_{\mathrm{sup}}\frac{r_k}{s_k}-\frac{E_{r_k}}{s_k}
 \to B_{\mathrm{sup}}(1-\beta).
\end{align*}
Using $pq=1$, continuity, and the degree-$q$ homogeneity of $\sigma$, we obtain
\begin{align*}
 0=B_{\mathrm{sup}}-
 \mathcal A\!\left(\beta^p,
          [B_{\mathrm{sup}}(1-\beta)]^p\right)^q.
\end{align*}
The case $\beta=1$ would give $B_{\mathrm{sup}}=2^q$, which is
excluded by Lemma~\ref{lem:obsf-support}. Thus both arguments of $\mathcal A$ are positive, and the equality characterization in that lemma forces
\begin{align*}
 \frac{\beta^p}{[B_{\mathrm{sup}}(1-\beta)]^p}=\tau_*,
 \quad
 \beta=\frac{B_{\mathrm{sup}}\tau_*^q}
             {1+B_{\mathrm{sup}}\tau_*^q}=\alpha_*.
\end{align*}
Every convergent subsequence of $m_k/s_k$ has this same limit. Consequently,
\begin{align*}
 \frac{m_k}{N_k}\to\alpha_*,\quad
 \frac{r_k}{N_k}\to1-\alpha_*,\quad
 \frac{U_{m_k}^q}{m_k}\to1,\quad
 \frac{E_{r_k}}{r_k}\to0.
\end{align*}
Here the last limit uses $1-\alpha_*>0$; in particular, $m_k,r_k\to\infty$.

The exact symmetric phase law now gives
\begin{align*}
 \frac{U_{m_k}^q}{m_k}
 =\Phi\!\left(\{\log_2m_k\}\right)^q\to1.
\end{align*}
By Lemma~\ref{lem:obsf-strict-S-phase}, $\Phi(t)=1$ only at the zero phase. Continuity and compactness of $\mathbb T$ therefore imply $\{\log_2m_k\}\to0$ in $\mathbb T$. Since
\begin{align*}
 \{\log_2m_k\}
 =\{\log_2N_k+\log_2(m_k/N_k)\},
\end{align*}
we conclude that
\begin{align*}
 t_0+\log_2\alpha_*=0\quad\text{in }\mathbb T.
\end{align*}
On the other hand, $E_{r_k}/r_k\to0$ implies
$W_{r_k}/r_k^p\to d_*$, while the actual continuation phases satisfy
\begin{align*}
 \{\log_2r_k\}\to
 t_1\coloneqq t_0+\log_2(1-\alpha_*)\quad\text{in }\mathbb T.
\end{align*}
Uniform phase convergence hence gives $\Psi_F(t_1)=c_*$. The same concentration argument, now applied to the sequence $r_k$, yields
\begin{align*}
 t_1+\log_2\alpha_*=0\quad\text{in }\mathbb T.
\end{align*}
Subtracting the two phase identities shows that $\log_2(1-\alpha_*)=0$ in $\mathbb T$. Thus $1-\alpha_*$ would be an integral power of two, contrary to $1/4<1-\alpha_*<1/2$ from Lemma~\ref{lem:obsf-contact-fraction}. This contradiction proves $\Psi_F(t)>c_*$ for every phase. Finally, $\Psi_F$ is continuous on the compact circle and attains its minimum, so $\min_{t\in\mathbb T}\Psi_F(t)>c_*$.
\end{proof}

\begin{proposition} \label{prop:obsf-phase-enclosure}
For every $t\in\mathbb T$,
\begin{align*}
    \Psi_F(t)\le\frac{c_*}{\phi_*}.
\end{align*}
Equivalently, with $\Delta_F(t)=1/\Psi_F(t)$, one has $\Delta_F(t)\ge d_*\phi_*$.
\end{proposition}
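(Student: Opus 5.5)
The plan is to compare $W_N$ with a homogeneous Bellman recursion in which the symmetric prefix values $U_m$ are replaced by their lower envelope $\phi_* m^p$. That recursion can then be analysed asymptotically with the sharp support inequality of Lemma~\ref{lem:obsf-support}, by the same tangency mechanism used in Lemma~\ref{lem:obsf-constant-rigidity}. Since $C_N=N^p\eta^{\mathrm F}_N=N^p/W_N$, and $\Psi_F$ is the uniform limit of the interpolants of $C_N$ (Theorem~\ref{thm:obsf-continuous-phase}), the claim reduces to
\begin{align*}
    \liminf_{N\to\infty}\frac{W_N}{N^p}\ge d_*\phi_*,
\end{align*}
where the $\liminf$ is over all $N$, with no phase restriction.

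\textbf{Step 1 (reduction to a pure power recursion).} Define $Y_1=1$ and $Y_N=\max_{1\le m<N}\mathcal A(m^p,Y_{N-m})$. I claim $W_N\ge\phi_*Y_N$, and prove it by induction. The base case is $W_1=1\ge\phi_*$. For the inductive step, Theorem~\ref{thm:phase} gives $U_m\ge\phi_* m^p$. Monotonicity and positive homogeneity of $\mathcal A$ then give, for every split,
\begin{align*}
    W_N\ge\mathcal A(U_m,W_{N-m})\ge\mathcal A(\phi_* m^p,\phi_* Y_{N-m})=\phi_*\mathcal A(m^p,Y_{N-m}).
\end{align*}
Maximizing over $m$ yields $W_N\ge\phi_*Y_N$. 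It therefore suffices to prove $\lambda\coloneqq\liminf_N Y_N^q/N\ge B_{\mathrm{sup}}$, because then $Y_N\ge d_*N^p(1-o(1))$.

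\textbf{Step 2 (tangent-split argument for $\lambda$).} First, $\lambda$ is finite and positive. It is finite because the support bound gives $Y_N^q\le B_{\mathrm{sup}}N$ by the induction of Proposition~\ref{prop:obsf-deficit}, with $m^p$ in place of $U_m$. It is positive because the choice $m=N-1$ gives $Y_N\ge (N-1)^p$.

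Suppose $\lambda<B_{\mathrm{sup}}$ and fix a small $\varepsilon>0$ with $\lambda_\varepsilon\coloneqq\lambda-\varepsilon>0$. There is an $R$ with $Y_r^q\ge\lambda_\varepsilon r$ for all $r\ge R$. Choose
\begin{align*}
    \alpha=\frac{\lambda_\varepsilon r_*}{1+\lambda_\varepsilon r_*}\in(0,1),
\end{align*}
so that the pair $(\alpha^p,[\lambda_\varepsilon(1-\alpha)]^p)$ has ratio exactly $\tau_*$. For large $N$, take $m=\lfloor\alpha N\rfloor$; then $r=N-m\ge R$. By monotonicity, homogeneity of degree $q$ of $\mathcal A(\cdot,\cdot)^q$ in the $q$-scaling, continuity, and the equality case of Lemma~\ref{lem:obsf-support},
\begin{align*}
    Y_N^q\ge\mathcal A\bigl(m^p,(\lambda_\varepsilon r)^p\bigr)^q
    =N\bigl(B_{\mathrm{sup}}\alpha+\lambda_\varepsilon(1-\alpha)\bigr)+o(N).
\end{align*}
Taking $\liminf$ gives $\lambda\ge\lambda_\varepsilon+\alpha(B_{\mathrm{sup}}-\lambda_\varepsilon)$. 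Letting $\varepsilon\downarrow0$, with $\alpha$ converging to its $\lambda$-value, which is positive, we get $\lambda\ge\lambda+\alpha_\lambda(B_{\mathrm{sup}}-\lambda)>\lambda$, a contradiction. Hence $\lambda=B_{\mathrm{sup}}$.

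\textbf{Step 3 (conclusion).} Combining the two steps,
\begin{align*}
    \liminf_N\frac{W_N}{N^p}\ge\phi_*\liminf_N\frac{Y_N}{N^p}=\phi_* B_{\mathrm{sup}}^p=\phi_* d_*,
\end{align*}
so $\limsup_N C_N\le c_*/\phi_*$. Given $t\in\mathbb T$, take $N_k=\lfloor 2^{k+t}\rfloor$. The uniform convergence in Theorem~\ref{thm:obsf-continuous-phase} gives $\Psi_F(t)=\lim_k C_{N_k}\le c_*/\phi_*$.

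The step I expect to need the most care is Step~2, specifically controlling the $o(N)$ error. The integer rounding of $m$ must be handled, and so must the fact that the lower bound $Y_r^q\ge\lambda_\varepsilon r$ is available only for $r\ge R$. Both are handled by uniform continuity of $\mathcal A^q$ on the compact set of normalized arguments, since $r/N\to1-\alpha>0$.
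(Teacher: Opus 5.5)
Your proof is correct. Its engine is the same as the paper's: a pivot placed at the tangency fraction where Lemma~\ref{lem:obsf-support} holds with equality (argument ratio $\tau_*$), which forces the normalized growth constant up to the support barrier. The organization differs, however.

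The paper argues directly on the limit profile. It takes a minimizer $t_0$ of $\Delta_F$ and bounds the prefix by $U_{m_k}\ge\phi_*m_k^p$. It bounds the continuation using the minimality of $\Delta_F(t_0)$ together with the uniform phase convergence. This gives the limiting Bellman inequality $\Delta_F(t_0)\ge\mathcal A(\phi_*\alpha^p,\Delta_F(t_0)(1-\alpha)^p)$, which the tangent choice of $\alpha$ contradicts unless $\Delta_F(t_0)^q\ge B_{\mathrm{sup}}\phi_*^q$.

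You instead first isolate the loss $\phi_*$ through the comparison $W_N\ge\phi_*Y_N$ with the frozen power recursion. You then run the tangency argument on the discrete quantity $\liminf_N Y_N^q/N$.

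Your route buys two things. First, it proves $\limsup_N N^p\eta^{\mathrm F}_N\le c_*/\phi_*$ without using the existence, continuity, or attained minimum of $\Psi_F$; Theorem~\ref{thm:obsf-continuous-phase} enters only in the final transfer to the profile. Second, it shows as a by-product that $Y_N^q/N\to B_{\mathrm{sup}}$. So the support barrier is exactly attained when prefixes carry pure power values, and $\phi_*$ is identified as the only source of loss in the enclosure.

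The paper's route is shorter. It reuses the compactness and uniform phase convergence already established and needs no auxiliary sequence.
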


\begin{proof}
Let $t_0$ minimize $\Delta_F$, and choose $N_k\to\infty$ whose phases approach $t_0$. Fix $\alpha\in(0,1)$ and put $m_k=\floor{\alpha N_k}$ and $r_k=N_k-m_k$. The Bellman recursion, the minimum property of $\Delta_F(t_0)$, and $U_{m_k}/m_k^p\ge\phi_*$ give, after division by $N_k^p$ and passage to the limit,
\begin{align}\label{eq:obsf-limit-bellman-lower}
    \Delta_F(t_0)
    \ge\mathcal A\bigl(\phi_*\alpha^p,\Delta_F(t_0)(1-\alpha)^p\bigr).
\end{align}
If $\Delta_F(t_0)^q<B_{\mathrm{sup}}\phi_*^q$, choose
\begin{align*}
    \alpha=\frac{\Delta_F(t_0)^qr_*}{\phi_*^q+\Delta_F(t_0)^qr_*}.
\end{align*}
The two positive arguments in \eqref{eq:obsf-limit-bellman-lower} then have equality ratio $\tau_*$. Their joined value has $q$-th power
\begin{align*}
    B_{\mathrm{sup}}\phi_*^q\alpha+\Delta_F(t_0)^q(1-\alpha)>\Delta_F(t_0)^q,
\end{align*}
contradicting \eqref{eq:obsf-limit-bellman-lower}. Hence $\Delta_F(t_0)^q\ge B_{\mathrm{sup}}\phi_*^q$, so $\Delta_F(t_0)\ge d_*\phi_*$. Since $t_0$ minimizes $\Delta_F$, this bound holds at every phase; reciprocation proves the result.
\end{proof}



\end{document}